\newcommand{\nc}{\newcommand}
\nc{\CC}{{\mathbb{C}}}
\nc{\FF}{{\mathbb{F}}}
\nc{\HH}{{\mathbb{H}}}
\nc{\LL}{{\mathbb{L}}}
\nc{\RR}{{\mathbb{R}}}
\nc{\PP}{{\mathbb{P}}}
\nc{\OO}{{\mathbb{O}}}
\nc{\QQ}{{\mathbb{Q}}}
\nc{\ZZ}{{\mathbb{Z}}}
\nc{\cA}{{\mathscr{A}}}
\nc{\cB}{{\mathscr{B}}}
\nc{\cC}{{\mathscr{C}}}
\nc{\cD}{{\mathscr{D}}}
\nc{\cE}{{\mathscr{E}}}
\nc{\cF}{{\mathscr{F}}}
\nc{\cG}{{\mathscr{G}}}
\nc{\cH}{{\mathscr{H}}}
\nc{\cI}{{\mathscr{I}}}
\nc{\cJ}{{\mathscr{J}}}
\nc{\cK}{{\mathscr{K}}}
\nc{\cL}{{\mathscr{L}}}
\nc{\cM}{{\mathscr{M}}}
\nc{\cN}{{\mathscr{N}}}
\nc{\cO}{{\mathscr{O}}}
\nc{\cP}{{\mathscr{P}}}
\nc{\cQ}{{\mathscr{Q}}}
\nc{\cR}{{\mathscr{R}}}
\nc{\cS}{{\mathscr{S}}}
\nc{\cT}{{\mathscr{T}}}
\nc{\cU}{{\mathscr{U}}}
\nc{\cV}{{\mathscr{V}}}
\nc{\cW}{{\mathscr{W}}}
\nc{\cX}{{\mathscr{X}}}
\nc{\cY}{{\mathscr{Y}}}
\nc{\cZ}{{\mathscr{Z}}}
\nc{\bA}{{\mathbf{A}}}
\nc{\bB}{{\mathbf{B}}}
\nc{\bC}{{\mathbf{C}}}
\nc{\bD}{{\mathbf{D}}}
\nc{\bE}{{\mathbf{E}}}
\nc{\bF}{{\mathbf{F}}}
\nc{\bG}{{\mathbf{G}}}
\nc{\bH}{{\mathbf{H}}}
\nc{\bI}{{\mathbf{I}}}
\nc{\bJ}{{\mathbf{J}}}
\nc{\bK}{{\mathbf{K}}}
\nc{\bL}{{\mathbf{L}}}
\nc{\bM}{{\mathbf{M}}}
\nc{\bN}{{\mathbf{N}}}
\nc{\bO}{{\mathbf{O}}}
\nc{\bP}{{\mathbf{P}}}
\nc{\bQ}{{\mathbf{Q}}}
\nc{\bR}{{\mathbf{R}}}
\nc{\bS}{{\mathbf{S}}}
\nc{\bT}{{\mathbf{T}}}
\nc{\bU}{{\mathbf{U}}}
\nc{\bV}{{\mathbf{V}}}
\nc{\bW}{{\mathbf{W}}}
\nc{\bX}{{\mathbf{X}}}
\nc{\bY}{{\mathbf{Y}}}
\nc{\bZ}{{\mathbf{Z}}}
\nc{\ba}{{\mathbf{a}}}
\nc{\bb}{{\mathbf{b}}}
\nc{\bc}{{\mathbf{c}}}
\nc{\bd}{{\mathbf{d}}}
\nc{\be}{{\mathbf{e}}}
\nc{\bg}{{\mathbf{g}}}
\nc{\bh}{{\mathbf{h}}}
\nc{\bi}{{\mathbf{i}}}
\nc{\bj}{{\mathbf{j}}}
\nc{\bk}{{\mathbf{k}}}
\nc{\bn}{{\mathbf{n}}}
\nc{\bo}{{\mathbf{o}}}
\nc{\bp}{{\mathbf{p}}}
\nc{\bq}{{\mathbf{q}}}
\nc{\br}{{\mathbf{r}}}
\nc{\bs}{{\mathbf{s}}}
\nc{\bt}{{\mathbf{t}}}
\nc{\bu}{{\mathbf{u}}}
\nc{\bv}{{\mathbf{v}}}
\nc{\bx}{{\mathbf{x}}}
\nc{\by}{{\mathbf{y}}}
\nc{\bz}{{\mathbf{z}}}
\nc{\fA}{{\mathfrak{A}}}
\nc{\fB}{{\mathfrak{B}}}
\nc{\fC}{{\mathfrak{C}}}
\nc{\fD}{{\mathfrak{D}}}
\nc{\fE}{{\mathfrak{E}}}
\nc{\fF}{{\mathfrak{F}}}
\nc{\fG}{{\mathfrak{G}}}
\nc{\fH}{{\mathfrak{H}}}
\nc{\fI}{{\mathfrak{I}}}
\nc{\fJ}{{\mathfrak{J}}}
\nc{\fK}{{\mathfrak{K}}}
\nc{\fL}{{\mathfrak{L}}}
\nc{\fM}{{\mathfrak{M}}}
\nc{\fN}{{\mathfrak{N}}}
\nc{\fO}{{\mathfrak{O}}}
\nc{\fP}{{\mathfrak{P}}}
\nc{\fQ}{{\mathfrak{Q}}}
\nc{\fR}{{\mathfrak{R}}}
\nc{\fS}{{\mathfrak{S}}}
\nc{\fT}{{\mathfrak{T}}}
\nc{\fU}{{\mathfrak{U}}}
\nc{\fV}{{\mathfrak{V}}}
\nc{\fW}{{\mathfrak{W}}}
\nc{\fX}{{\mathfrak{X}}}
\nc{\fY}{{\mathfrak{Y}}}
\nc{\fZ}{{\mathfrak{Z}}}
\nc{\fa}{{\mathfrak{a}}}
\nc{\fb}{{\mathfrak{b}}}
\nc{\fc}{{\mathfrak{c}}}
\nc{\fd}{{\mathfrak{d}}}
\nc{\fe}{{\mathfrak{e}}}
\nc{\ff}{{\mathfrak{f}}}
\nc{\fg}{{\mathfrak{g}}}
\nc{\fh}{{\mathfrak{h}}}
\nc{\fj}{{\mathfrak{j}}}
\nc{\fk}{{\mathfrak{k}}}
\nc{\fl}{{\mathfrak{l}}}
\nc{\fm}{{\mathfrak{m}}}
\nc{\fn}{{\mathfrak{n}}}
\nc{\fo}{{\mathfrak{o}}}
\nc{\fp}{{\mathfrak{p}}}
\nc{\fq}{{\mathfrak{q}}}
\nc{\fr}{{\mathfrak{r}}}
\nc{\fs}{{\mathfrak{s}}}
\nc{\ft}{{\mathfrak{t}}}
\nc{\fu}{{\mathfrak{u}}}
\nc{\fv}{{\mathfrak{v}}}
\nc{\fw}{{\mathfrak{w}}}
\nc{\fx}{{\mathfrak{x}}}
\nc{\fy}{{\mathfrak{y}}}
\nc{\fz}{{\mathfrak{z}}}
\nc{\sA}{{\mathsf{A}}}
\nc{\sB}{{\mathsf{B}}}
\nc{\sC}{{\mathsf{C}}}
\nc{\sD}{{\mathsf{D}}}
\nc{\sE}{{\mathsf{E}}}
\nc{\sF}{{\mathsf{F}}}
\nc{\sG}{{\mathsf{G}}}
\nc{\sH}{{\mathsf{H}}}
\nc{\sI}{{\mathsf{I}}}
\nc{\sJ}{{\mathsf{J}}}
\nc{\sK}{{\mathsf{K}}}
\nc{\sL}{{\mathsf{L}}}
\nc{\sM}{{\mathsf{M}}}
\nc{\sN}{{\mathsf{N}}}
\nc{\sO}{{\mathsf{O}}}
\nc{\sP}{{\mathsf{P}}}
\nc{\sQ}{{\mathsf{Q}}}
\nc{\sR}{{\mathsf{R}}}
\nc{\sS}{{\mathsf{S}}}
\nc{\sT}{{\mathsf{T}}}
\nc{\sU}{{\mathsf{U}}}
\nc{\sV}{{\mathsf{V}}}
\nc{\sW}{{\mathsf{W}}}
\nc{\sX}{{\mathsf{X}}}
\nc{\sY}{{\mathsf{Y}}}
\nc{\sZ}{{\mathsf{Z}}}
\nc{\sa}{{\mathsf{a}}}
\nc{\sd}{{\mathsf{d}}}
\nc{\se}{{\mathsf{e}}}
\nc{\sg}{{\mathsf{g}}}
\nc{\sh}{{\mathsf{h}}}
\nc{\si}{{\mathsf{i}}}
\nc{\sj}{{\mathsf{j}}}
\nc{\sk}{{\mathsf{k}}}
\nc{\sm}{{\mathsf{m}}}
\nc{\sn}{{\mathsf{n}}}
\nc{\so}{{\mathsf{o}}}
\nc{\sq}{{\mathsf{q}}}
\nc{\sr}{{\mathsf{r}}}
\nc{\st}{{\mathsf{t}}}
\nc{\su}{{\mathsf{u}}}
\nc{\sv}{{\mathsf{v}}}
\nc{\sw}{{\mathsf{w}}}
\nc{\sx}{{\mathsf{x}}}
\nc{\sy}{{\mathsf{y}}}
\nc{\sz}{{\mathsf{z}}}
\nc{\oA}{{\overline{A}}}
\nc{\oB}{{\overline{B}}}
\nc{\oC}{{\overline{C}}}
\nc{\oD}{{\overline{D}}}
\nc{\oE}{{\overline{E}}}
\nc{\oF}{{\overline{F}}}
\nc{\oG}{{\overline{G}}}
\nc{\oH}{{\overline{H}}}
\nc{\oI}{{\overline{I}}}
\nc{\oJ}{{\overline{J}}}
\nc{\oK}{{\overline{K}}}
\nc{\oL}{{\overline{L}}}
\nc{\oM}{{\overline{M}}}
\nc{\oN}{{\overline{N}}}
\nc{\oO}{{\overline{O}}}
\nc{\oP}{{\overline{P}}}
\nc{\oQ}{{\overline{Q}}}
\nc{\oR}{{\overline{R}}}
\nc{\oS}{{\overline{S}}}
\nc{\oT}{{\overline{T}}}
\nc{\oU}{{\overline{U}}}
\nc{\oV}{{\overline{V}}}
\nc{\oW}{{\overline{W}}}
\nc{\oX}{{\overline{X}}}
\nc{\oY}{{\overline{Y}}}
\nc{\oZ}{{\overline{Z}}}
\nc{\oa}{{\overline{a}}}
\nc{\ob}{{\overline{b}}}
\nc{\oc}{{\overline{c}}}
\nc{\od}{{\overline{d}}}
\nc{\of}{{\overline{f}}}
\nc{\og}{{\overline{g}}}
\nc{\oh}{{\overline{h}}}
\nc{\oi}{{\overline{i}}}
\nc{\oj}{{\overline{j}}}
\nc{\ok}{{\overline{k}}}
\nc{\ol}{{\overline{l}}}
\nc{\om}{{\overline{m}}}
\nc{\on}{{\overline{n}}}
\nc{\oo}{{\overline{o}}}
\nc{\op}{{\overline{p}}}
\nc{\oq}{{\overline{q}}}
\nc{\os}{{\overline{s}}}
\nc{\ot}{{\overline{t}}}
\nc{\ou}{{\overline{u}}}
\nc{\ov}{{\overline{v}}}
\nc{\ow}{{\overline{w}}}
\nc{\ox}{{\overline{x}}}
\nc{\oy}{{\overline{y}}}
\nc{\oz}{{\overline{z}}}
\nc{\tA}{{\tilde{A}}}
\nc{\tB}{{\tilde{B}}}
\nc{\tC}{{\tilde{C}}}
\nc{\tD}{{\tilde{D}}}
\nc{\tE}{{\tilde{E}}}
\nc{\tF}{{\tilde{F}}}
\nc{\tG}{{\tilde{G}}}
\nc{\tH}{{\tilde{H}}}
\nc{\tI}{{\tilde{I}}}
\nc{\tJ}{{\tilde{J}}}
\nc{\tK}{{\tilde{K}}}
\nc{\tL}{{\tilde{L}}}
\nc{\tM}{{\tilde{M}}}
\nc{\tN}{{\tilde{N}}}
\nc{\tO}{{\tilde{O}}}
\nc{\tP}{{\tilde{P}}}
\nc{\tQ}{{\tilde{Q}}}
\nc{\tR}{{\tilde{R}}}
\nc{\tS}{{\tilde{S}}}
\nc{\tT}{{\tilde{T}}}
\nc{\tU}{{\tilde{U}}}
\nc{\tV}{{\tilde{V}}}
\nc{\tW}{{\tilde{W}}}
\nc{\tX}{{\tilde{X}}}
\nc{\tY}{{\tilde{Y}}}
\nc{\tZ}{{\tilde{Z}}}
\nc{\ta}{{\tilde{a}}}
\nc{\tb}{{\tilde{b}}}
\nc{\tc}{{\tilde{c}}}
\nc{\td}{{\tilde{d}}}
\nc{\te}{{\tilde{e}}}
\nc{\tf}{{\tilde{f}}}
\nc{\tg}{{\tilde{g}}}
\nc{\ti}{{\tilde{i}}}
\nc{\tj}{{\tilde{j}}}
\nc{\tk}{{\tilde{k}}}
\nc{\tl}{{\tilde{l}}}
\nc{\tm}{{\tilde{m}}}
\nc{\tn}{{\tilde{n}}}
\nc{\tp}{{\tilde{p}}}
\nc{\tq}{{\tilde{q}}}
\nc{\tr}{{\tilde{r}}}
\nc{\ts}{{\tilde{s}}}
\nc{\tu}{{\tilde{u}}}
\nc{\tv}{{\tilde{v}}}
\nc{\tw}{{\tilde{w}}}
\nc{\tx}{{\tilde{x}}}
\nc{\ty}{{\tilde{y}}}
\nc{\tz}{{\tilde{z}}}
\nc{\hA}{{\hat{A}}}
\nc{\hB}{{\hat{B}}}
\nc{\hC}{{\hat{C}}}
\nc{\hD}{{\hat{D}}}
\nc{\hE}{{\hat{E}}}
\nc{\hF}{{\hat{F}}}
\nc{\hG}{{\hat{G}}}
\nc{\hH}{{\hat{H}}}
\nc{\hI}{{\hat{I}}}
\nc{\hJ}{{\hat{J}}}
\nc{\hK}{{\hat{K}}}
\nc{\hL}{{\hat{L}}}
\nc{\hM}{{\hat{M}}}
\nc{\hN}{{\hat{N}}}
\nc{\hO}{{\hat{O}}}
\nc{\hP}{{\hat{P}}}
\nc{\hQ}{{\hat{Q}}}
\nc{\hR}{{\hat{R}}}
\nc{\hS}{{\hat{S}}}
\nc{\hT}{{\hat{T}}}
\nc{\hU}{{\hat{U}}}
\nc{\hV}{{\hat{V}}}
\nc{\hW}{{\hat{W}}}
\nc{\hX}{{\hat{X}}}
\nc{\hY}{{\hat{Y}}}
\nc{\hZ}{{\hat{Z}}}
\nc{\ha}{{\hat{a}}}
\nc{\hb}{{\hat{b}}}
\nc{\hc}{{\hat{c}}}
\nc{\hd}{{\hat{d}}}
\nc{\he}{{\hat{e}}}
\nc{\hf}{{\hat{f}}}
\nc{\hg}{{\hat{g}}}
\nc{\hh}{{\hat{h}}}
\nc{\hi}{{\hat{i}}}
\nc{\hj}{{\hat{j}}}
\nc{\hk}{{\hat{k}}}
\nc{\hl}{{\hat{l}}}
\nc{\hn}{{\hat{n}}}
\nc{\ho}{{\hat{o}}}
\nc{\hp}{{\hat{p}}}
\nc{\hq}{{\hat{q}}}
\nc{\hr}{{\hat{r}}}
\nc{\hs}{{\hat{s}}}
\nc{\hu}{{\hat{u}}}
\nc{\hv}{{\hat{v}}}
\nc{\hw}{{\hat{w}}}
\nc{\hx}{{\hat{x}}}
\nc{\hy}{{\hat{y}}}
\nc{\hz}{{\hat{z}}}
\nc{\rA}{{\mathrm{A}}}
\nc{\rB}{{\mathrm{B}}}
\nc{\rE}{{\mathrm{E}}}
\nc{\rI}{{\mathrm{I}}}
\nc{\rS}{{\mathrm{S}}}
\nc{\rW}{{\mathrm{W}}}
\nc{\bl}{\bm{\lambda}}
\nc{\eps}{\varepsilon}
\nc{\lan}{\big\langle}
\nc{\ran}{\big\rangle}
\nc{\kk}{{\mathsf{k}}}
\DeclareMathOperator{\rL}{\mathrm{L}}
\def\bw#1#2{\textstyle{\bigwedge\hskip-0.9mm^{#1}}\hskip0.2mm{#2}}
\DeclareMathOperator{\Hom}{\mathrm{Hom}}
\DeclareMathOperator{\Ext}{\mathrm{Ext}}
\DeclareMathOperator{\Ker}{\mathrm{Ker}}
\DeclareMathOperator{\Ima}{\mathrm{Im}}
\DeclareMathOperator{\NS}{\mathrm{NS}}
\DeclareMathOperator{\Sym}{\mathrm{Sym}}
\DeclareMathOperator{\Gro}{\mathrm{K_0}}
\DeclareMathOperator{\NGr}{\mathrm{K_0^{num}}}
\DeclareMathOperator{\KGr}{\mathrm{K_0}}
\theoremstyle{plain}
\newtheorem{theorem}{Theorem}[section]
\newtheorem{lemma}[theorem]{Lemma}
\newtheorem{proposition}[theorem]{Proposition}
\newtheorem{corollary}[theorem]{Corollary}
\theoremstyle{definition}
\newtheorem{definition}[theorem]{Definition}
\newtheorem{example}[theorem]{Example}
\theoremstyle{remark}
\newtheorem{remark}[theorem]{Remark}
\newcommand{\GG}{\mathrm{G}}
\nc{\ce}{{\mathrm{e}}}
\nc{\rc}{{\mathrm{c}}}
\nc{\rr}{{\mathrm{r}}}
\nc{\rs}{{\mathrm{s}}}
\DeclareMathOperator{\rk}{{\mathrm{rk}}}
\DeclareMathOperator{\CH}{{\mathrm{CH}}}
\DeclareMathOperator{\Br}{{\mathrm{Br}}}
\DeclareMathOperator{\Conv}{{\mathrm{Conv}}}
\title[Exceptional collections in surface-like categories]{Exceptional collections in surface-like categories}
\author{Alexander Kuznetsov}
\address{{\sloppy
\parbox{0.9\textwidth}{
Algebraic Geometry Section, Steklov Mathematical Institute of Russian Academy of Sciences,\\
8 Gubkin str., Moscow 119991 Russia
}\bigskip}}
\email{akuznet@mi.ras.ru}
\date{}
\thanks{This work is supported by the Russian Science Foundation under grant 14-50-00005.}
\begin{document}

\begin{abstract}
We provide a categorical framework for recent results of Markus Perling on combinatorics of exceptional collections on numerically rational surfaces.
Using it we simplify and generalize some of Perling's results as well as Vial's criterion for existence of a numerical exceptional collection.
\end{abstract}

\maketitle

\section{Introduction}

A beautiful recent paper of Markus Perling~\cite{Pe} proves that any numerically exceptional collection of maximal length in the derived category of a numerically rational surface 
(i.e., a surface with zero irregularity and geometric genus) can be transformed by mutations into an exceptional collection consisting of objects of rank 1.
In this note we provide a categorical framework 
that allows to extend and simplify Perling's result.

For this we introduce a notion of a surface-like category.
Roughly speaking, it is a triangulated category~$\cT$ whose \emph{numerical Grothendieck group} $\NGr(\cT)$, considered as an abelian group with a bilinear form (\emph{Euler form}), 
behaves similarly to the numerical Grothendieck group of a smooth projective surface, see Definition~\ref{def:surface-like}.
Of course, for any smooth projective surface $X$ its derived category $\bD(X)$ is surface-like. 
However, there are surface-like categories of different nature, for instance derived categories of noncommutative surfaces and of even-dimensional Calabi--Yau varieties turn out to be surface-like (Example~\ref{example:cy}).
Also, some subcategories of surface-like categories are surface-like.
Thus, the notion of a surface-like category is indeed more general.

In fact, all results of this paper have a numerical nature, so instead of considering categories, we pass directly to their numerical Grothendieck groups.
These are free abelian groups $\GG$ (we assume them to be of finite rank), equipped with a bilinear form $\chi$ that is neither symmetric, nor skew-symmetric in general.
We call such pair $(\GG,\chi)$ a \emph{pseudolattice} (since this is a non-symmetric version of a lattice).
We define and investigate a notion of a \emph{surface-like} pseudolattice (Definition~\ref{def:surface-like}), and show that it has many features similar to numerical Grothendieck groups of surfaces.
For instance, one can define the rank function on such~$\GG$, define the Neron--Severi lattice $\NS(\GG)$ of $\GG$ (that is isomorphic to the Neron--Severi lattice of the surface~$X$ when $\GG =\NGr(\bD(X))$), 
and construct the canonical class $K_\GG \in \NS(\GG) \otimes \QQ$ (in general, it is only rational).

We also introduce some important properties of surface-like pseudolattices: \emph{geometricity} (Definition~\ref{def:geometricity}), \emph{minimality} (Definition~\ref{def:minimlaity}),
and define their \emph{defects} (Definition~\ref{def:defect}).
The main result of this paper is Theorem~\ref{theorem-ranks-1}, saying that if a geometric surface-like pseudolattice with zero defect has an exceptional basis (Definition~\ref{def:exceptional}),
then this basis can be transformed by mutations to a basis consisting of elements of rank~1.

To prove Theorem~\ref{theorem-ranks-1} we first classify all minimal geometric pseudolattices $\GG$ with an exceptional basis --- it turns out that minimality implies that 
the pseudolattice is isometric either to~$\NGr(\bD(\PP^2))$ or to $\NGr(\bD(\PP^1 \times \PP^1))$, see Theorem~\ref{theorem:minimal-cats}.
In particular, its rank is 3 or 4, and its defect is 0.

To get the general case from this we investigate a kind of \emph{the minimal model program} for surface-like pseudolattices:
we introduce the notion of a \emph{contraction} of a pseudolattice (with respect to an exceptional element of zero rank),
and show that one can always pass from a general surface-like pseudolattice to a minimal one by a finite number of contractions.
We verify that geometricity is preserved under contractions, and that defect does not decrease.
In particular, if we start with a geometric pseudolattice of zero defect with an exceptional basis, then defect does not change under these contractions.
This allows to deduce Theorem~\ref{theorem-ranks-1} from Theorem~\ref{theorem:minimal-cats}.

In most of the proofs we follow the original arguments of Perling.
The main new feature that we introduce is the notion of a surface-like pseudolattice that allows to define the contraction operation and gives more flexibility.
In particular, this allows to get rid easily from exceptional objects of zero rank that appear as a headache in Perling's approach.
Also, the general categorical perspective we take simplifies some of the computations, especially those related to use of Riemann--Roch theorem.

Besides Perling's results, we also apply this technique to prove a generalization of a criterion of Charles Vial \cite[Theorem~3.1]{V} 
for existence of a numerically exceptional collection in the derived category of a surface, see Theorem~\ref{theorem:criterion}.
In fact, in the proof we use a lattice-theoretic result~\cite[Proposition~A.12]{V}, but besides that, the proof is an elementary consequence of the minimal model program for surface-like pseudolattices.

Of course, it would be very interesting to find higher-dimensional analogues of this technique.
For this, we need to understand well the relation between the (numerical) Grothendieck group of higher dimensional varieties and their (numerical) Chow groups.
An important result in this direction is proved in a recent paper by Sergey Gorchinskiy~\cite{G}.

The paper is organized as follows.
In Section~\ref{section:pseudolattices} we discuss numerical Grothendieck groups of triangulated categories and define pseudolattices.
We also discuss here exceptional bases of pseudolattices and their mutations.
In Section~\ref{section:surface-like} we define surface-like categories and pseudolattices, provide some examples, and discuss their basic properties.
In particular, we explain how one defines the rank function, the Neron--Severi lattice, and the canonical class of a surface-like pseudolattice.
In Section~\ref{section:minimal} we define minimality and geometricity, and classify minimal geometric surface-like pseudolattices with an exceptional basis.
In course of classification we associate with an exceptional basis a toric system and construct from it a fan in a rank 2 lattice, giving rise to a toric surface.
Finally, in section~\ref{section:mmp} we define a contraction of a pseudolattice with respect to a zero rank exceptional vector, and via a minimal model program 
deduce the main results of the paper from the classification results of the previous section.
Besides that, we define the defect of a pseudolattice and investigate its behavior under contractions.

\medskip

After the first version of this paper was published, I was informed about a paper of Louis de Thanhoffer de Volcsey and Michel Van den Bergh~\cite{dTVdB}, 
where a very similar categorical framework was introduced.
In particular, in~\cite{dTVdB} a notion of a \emph{Serre lattice of surface type} was defined, which is almost equivalent to the notion of a surface-like pseudolattice, see Remark~\ref{remark:surface-type-like},
and some numerical notions of algebraic geometry were developed on this base.
So, the content of Section~\ref{section:surface-like} of this paper is very close to the content of~\cite[Section~3]{dTVdB}.

\medskip

{\bf Acknowledgements.}
It should be clear from the above that the paper owns its existence to the work of Markus Perling.
I would also like to thank Sergey Gorchinskiy for very useful discussions.
I am very grateful to Pieter Belmans and Michel Van den Bergh for informing me about the paper~\cite{dTVdB}.

\section{Numerical Grothendieck groups and pseudolattices}\label{section:pseudolattices}

Let $\cT$ be a saturated (i.e., smooth and proper) $\kk$-linear triangulated category, where $\kk$ is a field.
Let $\KGr(\cT)$ be the Grothendieck group of $\cT$ and $\chi:\KGr(\cT) \otimes \KGr(\cT) \to \ZZ$ the Euler bilinear form:
\begin{equation*}
\chi(F_1,F_2) = \sum (-1)^i \dim \Hom(F_1,F_2[i]).
\end{equation*}
In general the form $\chi$ is neither symmetric, nor skew-symmetric; 
however, it is symmetrized by the Serre functor $\bS \colon \cT \to \cT$ of $\cT$, i.e., we have
\begin{equation*}
\chi(F_1,F_2) = \chi(F_2,\bS(F_1)).
\end{equation*}
Since the Serre functor is an autoequivalence, it follows that the left kernel of $\chi$ coincides with its right kernel.
We denote the quotient by 
\begin{equation*}
\NGr(\cT) := \KGr(\cT)/\Ker\chi;
\end{equation*}
it is called the {\sf numerical Grothendieck group} of $\cT$.

The numerical Grothendieck group $\NGr(\cT)$ is torsion-free (any torsion element would be in the kernel of $\chi$) and finitely generated \cite{E}, hence a free abelian group of finite rank.
The form $\chi$ induces a nondegenerate bilinear form on~$\NGr(\cT)$ which we also denote by $\chi$.
This form $\chi$ is still neither symmetric, nor skew-symmetric.

\subsection{Pseudolattices}

For purposes of this paper we want to axiomatize the above situation.

\begin{definition}
A {\sf pseudolattice} is a finitely generated free abelian group $\GG$ equipped with a nondegenerate bilinear form $\chi \colon \GG \otimes \GG \to \ZZ$.
A pseudolattice $(\GG,\chi)$ is {\sf unimodular} if the form $\chi$ induces an isomorphism $\GG \to \GG^\vee$.
An {\sf isometry} of pseudolattices $(\GG,\chi)$ and $(\GG',\chi')$ is an isomorphism of abelian groups $f \colon \GG \to \GG'$ such that $\chi'(f(v_1), f(v_2)) = \chi(v_1,v_2)$ for all $v_1,v_2 \in \GG$.
\end{definition}

For any $v_0 \in \GG$ we define 
\begin{equation*}
v_0^\perp = \{ v \in \GG \mid \chi (v_0,v) = 0 \},
\qquad
{}^\perp v_0 = \{ v \in \GG \mid \chi (v,v_0) = 0 \},
\end{equation*}
the {\sf right} and the {\sf left orthogonal} complements of $v_0$ in $\GG$.

We say that a pseudolattice $(\GG,\chi)$ {\sf has a Serre operator} if there is an automorphism $\rS_\GG \colon \GG \to \GG$ such that 
\begin{equation*}
\chi(v_1,v_2) = \chi(v_2,\rS_\GG(v_1))
\qquad\text{for all $v_1,v_2 \in \GG$}.
\end{equation*}
Of course, a Serre operator is unique, and if $\GG$ is unimodular then $\rS_\GG = (\chi^{-1})^T \circ \chi$ is the Serre operator.
Also it is clear that if $\GG = \NGr(\cT)$ and $\cT$ admits a Serre functor then the induced operator on $\GG$ is the Serre operator.
The notion of a pseudolattice with a Serre operator is equivalent to the notion of Serre lattice of~\cite{dTVdB}.

We denote by 
\begin{equation*}
\chi_+ := \chi + \chi^T,
\qquad 
\chi_- := \chi - \chi^T
\end{equation*}
the symmetrization and skew-symmetrization of the form $\chi$ on $\GG$.
If $\cT$ has a Serre operator, these forms can be written as
\begin{equation}\label{eq:serre-symmetrize}
\chi_+(v_1,v_2) = \chi(v_1, (1 + \rS_\GG)v_2),
\qquad 
\chi_-(v_1,v_2) = \chi(v_1, (1 - \rS_\GG)v_2).
\end{equation}

\subsection{Exceptional collections and mutations}

In this section we discuss a pseudolattice version of exceptional collections and mutations.

\begin{definition}\label{def:exceptional}
An element $\ce \in \GG$ is {\sf exceptional} if $\chi(\ce,\ce) = 1$.
A sequence of elements $(\ce_1,\ce_2,\dots,\ce_n)$ is {\sf exceptional} if each $\ce_i$ is exceptional and $\chi(\ce_i,\ce_j) = 0$ for all $i > j$ ({\sf semiorthogonality}).
An {\sf exceptional basis} is an exceptional sequence in $\GG$ that is its basis.
\end{definition}

Of course, when $\GG = \NGr(\cT)$, the class of an exceptional object in $\cT$ is exceptional, and the classes of elements of an exceptional collection in $\cT$ form an exceptional sequence in $\GG$,
which is an exceptional basis if the collection is full.

\begin{lemma}\label{lemma:chi-unimodular}
If $\GG$ has an exceptional sequence $\ce_1,\dots,\ce_n$ of length $n = \rk \GG$ then $\ce_1,\dots,\ce_n$ is an exceptional basis and $\GG$ is unimodular.
\end{lemma}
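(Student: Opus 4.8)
The plan is to compute a single determinant in two different ways. First I would form the Gram matrix $M = \bigl(\chi(\ce_i,\ce_j)\bigr)_{i,j=1}^{n}$ of $\chi$ with respect to the given sequence $(\ce_1,\dots,\ce_n)$. Semiorthogonality says $\chi(\ce_i,\ce_j)=0$ for $i>j$, so $M$ is upper triangular, and each $\ce_i$ being exceptional gives $M_{ii}=\chi(\ce_i,\ce_i)=1$; hence $\det M = 1$.

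Next I would bring in an auxiliary honest basis $(f_1,\dots,f_n)$ of $\GG$ and write $\ce_i = \sum_k A_{ik} f_k$ with $A$ an $n\times n$ integer matrix (at this stage $A$ is just some matrix — I must not presume the $\ce_i$ span $\GG$). Let $C = \bigl(\chi(f_k,f_l)\bigr)_{k,l}$ be the Gram matrix of $\chi$ in this basis. A direct expansion of $\chi(\ce_i,\ce_j)=\sum_{k,l}A_{ik}C_{kl}A_{jl}$ gives $M = A\,C\,A^{T}$, so $(\det A)^2\,\det C = \det M = 1$. Since $\det A,\det C \in \ZZ$, this forces $\det A = \pm 1$ and $\det C = \pm 1$.

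From $\det A = \pm 1$, i.e. $A \in \GL_n(\ZZ)$, the coordinate rows of $\ce_1,\dots,\ce_n$ form a $\ZZ$-basis of $\ZZ^n$, so $(\ce_1,\dots,\ce_n)$ is a basis of $\GG$, hence an exceptional basis. From $\det C = \pm 1$ the matrix $C$ lies in $\GL_n(\ZZ)$; since $C$ is the matrix, in the bases $(f_k)$ and $(f_k^{\vee})$, of the map $\GG \to \GG^{\vee}$ determined by $\chi$, this map is an isomorphism and $\GG$ is unimodular.

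I do not expect a genuine obstacle here: the whole argument is determinant bookkeeping. The only points needing care are (a) not assuming at the outset that the $\ce_i$ span $\GG$, so that $A$ enters merely as an integer matrix whose determinant we learn a posteriori to be $\pm1$; and (b) noticing that nondegeneracy of $\chi$ by itself would only yield $\det C \neq 0$, whereas the existence of an exceptional sequence of full length $n=\rk\GG$ is exactly what upgrades this to $\det C = \pm 1$, and hence to unimodularity.
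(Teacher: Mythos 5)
Your proof is correct and rests on the same key observation as the paper's: the Gram matrix of the exceptional sequence is upper triangular with units on the diagonal, hence unimodular. The paper phrases the remaining bookkeeping via the composition $\ZZ^n \to \GG \xrightarrow{\chi} \GG^\vee \to \ZZ^n$ (injective followed by surjective, then rank and torsion-freeness), while you take determinants of the coordinate version $M = ACA^T$ of that same factorization; the two are equivalent.
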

\begin{proof}
Consider the composition of maps 
\begin{equation*}
\ZZ^n \xrightarrow{\ (\ce_1,\dots,\ce_n)\ } \GG \xrightarrow{\quad\chi\quad} \GG^\vee \xrightarrow{\ (\ce_1,\dots,\ce_n)^T\ } \ZZ^n.
\end{equation*}
The composition is given by the Gram matrix of the form $\chi$ on the set of vectors $\ce_1,\dots,\ce_n$ which by definition of an exceptional sequence is upper-triangular with units on the diagonal, hence is an isomorphism.
It follows that the first map is injective and the last is surjective.
Since $\rk(\GG^\vee) = \rk(\GG) = n$ and $\GG^\vee$ is torsion-free, the last map is an isomorphism, hence so is the first map.
Thus $\ce_1,\dots,\ce_n$ is an exceptional basis in $\GG$.
Moreover, it follows that $\chi$ is an isomorphism, hence $\GG$ is unimodular.
\end{proof}

Assume $\ce \in \GG$ is exceptional.

\begin{definition}\label{def:mutation}
The {\sf left} (resp.\ {\sf right}) {\sf mutation} with respect to $\ce$ is an endomorphism of $\GG$ defined by 
\begin{equation}\label{eq:mutations}
\LL_\ce(v) := v - \chi(\ce,v)\ce,
\qquad 
\RR_\ce(v) := v - \chi(v,\ce)\ce.
\end{equation}
\end{definition}

In fact, $\LL_\ce$ is just the projection onto the right orthogonal $\ce^\perp$ (and similarly for $\RR_\ce$).
In particular, the mutations kill $\ce$ and define mutually inverse isomorphisms of the orthogonals
\begin{equation*}
\xymatrix@1@C=5em{
{}^\perp\ce\  \ar@<.5ex>[r]^{\LL_\ce} & \ \ce^\perp \ar@<.5ex>[l]^{\RR_\ce}.
}
\end{equation*}
Moreover, it is easy to see that given an exceptional sequence $\ce_\bullet = (\ce_1,\dots,\ce_n)$ in $\GG$, the sequences
\begin{equation*}
\begin{aligned}
\LL_{i,i+1}(\ce_\bullet) &:= (\ce_1, \dots, \ce_{i-1}, && \LL_{\ce_i}(\ce_{i+1}),\ \ \, \ce_i, 	&& \ce_{i+2}, \dots, \ce_n),\\
\RR_{i,i+1}(\ce_\bullet) &:= (\ce_1, \dots, \ce_{i-1}, && \ce_{i+1}, \RR_{\ce_{i+1}}(\ce_{i}),  && \ce_{i+2}, \dots, \ce_n)
\end{aligned}
\end{equation*}
are exceptional, and these two operations are mutually inverse.

If $\GG$ has a Serre operator, every exceptional sequence $\ce_1,\dots,\ce_n$ can be extended to an infinite sequence $\{\ce_i\}_{i \in \ZZ}$ by the rule
\begin{equation*}
\ce_i = \rS_\GG(\ce_{i+n}).
\end{equation*}
This sequence is called a {\sf helix}. Its main property is that for any $k \in \ZZ$ the sequence $\ce_{k+1},\dots,\ce_{k+n}$ is an exceptional sequence, generating the same helix (up to an index shift).

If $\ce_1,\dots,\ce_n$ is an exceptional basis then
\begin{equation*}
\ce_0 = (\LL_{\ce_1} \circ \dots \circ \LL_{\ce_{n-1}})(\ce_n),
\qquad 
\ce_{n+1} = (\RR_{\ce_n} \circ \dots \circ \RR_{\ce_{2}})(\ce_1),
\end{equation*}
and it follows that every element of the helix can be obtained from the original basis by mutations.
Mutations $\LL_{i,i+1}$ and $\RR_{i,i+1}$ (with $i$ now being an arbitrary integer) can be also defined for helices.

\section{Surface-like categories and pseudolattices}\label{section:surface-like}

\subsection{Definition and examples}

The next is the main definition of the paper.

\begin{definition}[\protect{cf.~\cite[Definition~3.2.1]{dTVdB}}]
\label{def:surface-like}
We say that a pseudolattice $(\GG,\chi)$ is {\sf surface-like} if there is a primitive element $\bp \in \GG$ such that
\begin{enumerate}
\item $\chi(\bp,\bp) = 0$,
\item $\chi_-(\bp,-) = 0$ (i.e., $\chi(\bp,v) = \chi(v,\bp)$ for any $v \in \GG$),
\item the form $\chi_-$ vanishes on the subgroup $\bp^\perp = {}^\perp\bp \subset \GG$ (i.e., $\chi$ is symmetric on $\bp^\perp$).
\end{enumerate}
An element $\bp$ as above is called a {\sf point-like element} in $\GG$.

We say that a smooth and proper triangulated category $\cT$ is {\sf surface-like} if its numerical Grothendieck group $(\NGr(\cT),\chi)$ is surface-like (with some choice of a point-like element).
\end{definition}

\begin{remark}\label{remark:surface-type-like}
A Serre operator $\rS_\GG$ of a surface-like pseudolattice, if exists, is unipotent by Corollary~\ref{corollary:serre-unipotent}, 
and by~\eqref{eq:serre-symmetrize} and nondegeneracy of~$\chi$ the rank of $\rS_\GG - 1$ does not exceed 2, 
hence a surface-like pseudolattice with a Serre operator is a Serre lattice of surface type as defined in~\cite[Definition~3.2.1]{dTVdB}.
Conversely, as a combination of Lemma~\ref{lemma:surface-like-criterion} below and~\cite[Lemma~3.3.2]{dTVdB} shows, a Serre lattice of surface* type is a surface-like pseudolattice (with a point-like element 
being a primitive generator of the smallest piece of the numerical codimension filtration of~\cite[Section~3.3]{dTVdB}).
However, again by Lemma~\ref{lemma:surface-like-criterion}, a Serre lattice of surface type which is not of surface* type is surface-like only if $\chi$ has an isotropic vector.
\end{remark}

If we choose a basis $v_0,\dots,v_{n-1}$ in $\GG$ such that $v_{n-1} = \bp$ and $\bp^\perp = \langle v_1,\dots,v_{n-1} \rangle$ 
then the above definition is equivalent to the fact, that the Gram matrix of the bilinear form $\chi$ takes the following form:
\begin{equation}\label{eq:chi-matrix}
\chi(v_i,v_j) = 
\begin{pmatrix}
a & b_1 & \dots & b_{n-2} & d \\
b'_1 & c_{11} & \dots & c_{1,n-2} & 0 \\
\vdots & \vdots & \ddots & \vdots & \vdots \\
b'_{n-2} & c_{n-2,1} & \dots & c_{n-2,,n-2} & 0 \\
d & 0 & \dots & 0 & 0
\end{pmatrix}
\end{equation}
with the submatrix $(c_{ij})$ being symmetric.

The following is a useful reformulation of Definition~\ref{def:surface-like}.

\begin{lemma}\label{lemma:surface-like-criterion}
A pseudolattice $\GG$ is surface-like if and only if one of the following two cases takes place:
\begin{enumerate}\renewcommand{\theenumi}{\alph{enumi}}
\item\label{item:cy} either $\chi_- = 0$ (i.e., the form $\chi$ is symmetric), and $\chi$ has an isotropic vector;
\item\label{item:r2} or the rank of $\chi_-$ equals $2$, and the restriction $\chi\vert_{\Ker \chi_-}$ is degenerate.
\end{enumerate}
In case~\eqref{item:cy} an element $\bp \in \GG$ is point-like if and only if it is isotropic, i.e., $\chi(\bp,\bp) = 0$.
In case~\eqref{item:r2} an element $\bp \in \GG$ is point-like if and only if $\bp \in \Ker(\chi\vert_{\Ker \chi_-})$.
\end{lemma}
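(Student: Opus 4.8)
The plan is to work in the rational vector space $\GG_\QQ := \GG \otimes \QQ$, where conditions (1)--(3) become statements about the skew-symmetric form $\chi_-$ and the symmetric form $\chi$ (restricted to various subspaces), and then to descend back to $\GG$ using that the relevant subgroups are saturated. Write $n := \rk \GG$. The one piece of linear algebra I would isolate is the standard fact that a maximal isotropic subspace of a skew-symmetric form of rank $2r$ on an $n$-dimensional space has dimension $n - r$; consequently, if such a form has an isotropic subspace of dimension $n - 1$, then $r \le 1$.

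For the forward implication, suppose $\GG$ is surface-like with point-like element $\bp$. Condition (2) gives $\bp^\perp = {}^\perp\bp$, and since $\chi$ is nondegenerate and $\bp$ is a nonzero primitive vector this is a corank-one saturated subgroup. Condition (3) says $\chi_-$ vanishes on $\bp^\perp$, so $\bp^\perp \otimes \QQ$ is a codimension-one isotropic subspace of $(\GG_\QQ, \chi_-)$; by the fact above $\rk \chi_- \le 2$, hence, being even, $\rk \chi_- \in \{0,2\}$. If $\rk \chi_- = 0$, then $\chi$ is symmetric and condition (1) says $\bp$ is isotropic, so we are in case~\eqref{item:cy}. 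If $\rk \chi_- = 2$, set $K := \Ker \chi_-$, a corank-two saturated subgroup, so that condition (2) reads $\bp \in K$; now $\bp^\perp \otimes \QQ$ is a \emph{maximal} isotropic subspace of $\chi_-$, hence contains the radical $K \otimes \QQ$, so $K \subseteq \bp^\perp$, i.e. $\chi(\bp, w) = 0$ for every $w \in K$, i.e. $\bp \in \Ker(\chi\vert_K)$. In particular $\chi\vert_K$ is degenerate, so we are in case~\eqref{item:r2}, and at the same time the "only if" part of the description of point-like elements in case~\eqref{item:r2} is proved.

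For the converse I would argue as follows. In case~\eqref{item:cy}, conditions (2) and (3) hold automatically, so a primitive $\bp$ is point-like iff $\chi(\bp,\bp) = 0$; the hypothesis provides an isotropic vector, which remains isotropic after replacing it by the associated primitive vector, so a point-like element exists and the stated description of point-like elements in case~\eqref{item:cy} follows. In case~\eqref{item:r2}, degeneracy of $\chi\vert_K$ yields a nonzero saturated subgroup $\Ker(\chi\vert_K) \subseteq \GG$; pick a primitive $\bp$ in it. Then $\chi(\bp,\bp) = 0$ because $\bp \in \Ker(\chi\vert_K)$, and $\bp \in K$ gives condition (2). For condition (3): $\bp \in \Ker(\chi\vert_K)$ forces $K \subseteq \bp^\perp$, so $\bp^\perp$ has corank one over $K$; since $\chi_-$ descends to a nondegenerate skew form on the rank-two group $\GG/K$, and the image of $\bp^\perp$ there is a rank-one (hence $\chi_-$-isotropic) subgroup, $\chi_-$ vanishes on $\bp^\perp$. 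Thus $\bp$ is point-like, $\GG$ is surface-like, and the description of point-like elements in case~\eqref{item:r2} is complete.

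The main obstacle, or rather the only thing that is not immediate, is the bookkeeping that licenses the passage between $\GG$ and $\GG_\QQ$ and the existence of \emph{primitive} point-like elements: one has to note that $\Ker \chi_-$ (the kernel of $\GG \to \GG^\vee$) and $\Ker(\chi\vert_K)$ (the kernel of $K \to K^\vee$) are saturated in $\GG$, that $\bp^\perp$ is saturated, and that $\chi_-$ really does descend to a nondegenerate form on $\GG/\Ker \chi_-$. It is also worth flagging that, since "point-like" is defined to include primitivity, the two characterizations in the statement should be read as statements about primitive vectors (equivalently, up to rescaling).
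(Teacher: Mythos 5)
Your proof is correct and follows essentially the same route as the paper's: a direct unwinding of Definition~\ref{def:surface-like}, using that a skew form vanishing on a hyperplane has rank at most $2$ and that a maximal isotropic subspace contains the radical. You simply make explicit a few linear-algebra facts (and the primitivity/saturation bookkeeping) that the paper's proof asserts without comment.
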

\begin{proof}
Assume $\GG$ is surface-like.
If $\chi_- = 0$, we are in case~\eqref{item:cy}; then $\bp$ is isotropic by Definition~\ref{def:surface-like}(1).
Otherwise the rank of $\chi_-$ equals~2, since $\chi_-$ vanishes on a hyperplane $\bp^\perp \subset \GG$ by Definition~\ref{def:surface-like}(3), and moreover $\Ker\chi_- \subset \bp^\perp$.
Furthermore, $\bp \in \Ker \chi_-$ by Definition~\ref{def:surface-like}(2), and since $\Ker \chi_- \subset \bp^\perp$ we have $\chi(\Ker \chi_-, \bp) = 0$, hence $\bp$ is in the kernel of the restriction $\chi\vert_{\Ker\chi_-}$.

Conversely, if~\eqref{item:cy} holds and $\bp$ is isotropic, then $\chi_- = 0$ and clearly Definition~\ref{def:surface-like} holds.
Similarly, if~\eqref{item:r2} holds and $\bp \in \Ker (\chi\vert_{\Ker \chi_-})$, then parts (1) and (2) of Definition~\ref{def:surface-like} hold, 
and since $\bp^\perp$ contains $\Ker\chi_-$ as a hyperplane, part (3) also holds.
\end{proof}

The above argument also shows that when $\chi_- \ne 0$, part (1) of Definition~\ref{def:surface-like} follows from parts (2) and (3).
Note also that the restriction on the rank of $\chi_-$ also recently appeared in~\cite{BR1}.

Let us give some examples.

\begin{example}\label{example:cy}
Let $\cT$ be a smooth and proper Calabi--Yau category of even dimension (i.e., its Serre functor is a shift $\bS_\cT \cong [k]$ with even $k$), and $P \in \cT$ is a point object, i.e., $\Ext^\bullet(P,P)$ is an exterior algebra on $\Ext^1(P,P)$.
Then $\cT$ is a surface-like category and $\GG = \NGr(\cT)$ is a surface-like pseudolattice with $[P]$~being a point-like element, since the Euler form is symmetric and so we are in case~\eqref{item:cy} of Lemma~\ref{lemma:surface-like-criterion}.
\end{example}

For us, however, the main example is the next one.

\begin{example}\label{example:standard}
Let $X$ be a smooth projective surface, set $\cT = \bD(X)$ to be the bounded derived category of coherent sheaves on $X$, and let $\GG = \NGr(\cT) = \NGr(X)$.
The topological filtration $0 \subset F^2\Gro(X) \subset F^1\Gro(X) \subset F^0\Gro(X) = \Gro(X)$ of the Grothendieck group $\Gro(X)$ (by codimension of support)
induces a filtration $0 \subset \GG_2 \subset \GG_1 \subset \GG_0 = \GG$ on the numerical Grothendieck group. Consider the maps
\begin{equation*}\arraycolsep=.1em
\begin{array}{rll}
\rr & \colon \GG_0 / \GG_1 & \xrightarrow{\ \ } \ZZ, \\
\rc_1 & \colon \GG_1 / \GG_2 & \xrightarrow{\ \ \ } \NS(X), \\
\rs & \colon \GG_2 & \xrightarrow{\ \ \ } \ZZ,
\end{array}
\end{equation*}
given by the rank, the first Chern class, and the Euler characteristic, where $\NS(X)$ is the numerical Neron--Severi group of $X$ (in particular, we quotient out the torsion in $\CH^1(X)$).
The rank map is clearly an isomorphism, and so is $\rc_1$ (the surjectivity of $\rc_1$ follows from the surjectivity of $F^1\!K_0(X) \to \CH^1(X)$, and for injectivity it is enough
to note that if $\cL$ is a line bundle such that $\rc_1(\cL)$ is numerically equivalent to zero, then by Riemann--Roch $[\cL] = [\cO_X]$ in $\NGr(\bD(X))$).
It is also clear that $\rs$ is injective, so if we normalize it by dividing by the minimal degree of a $0$-cycle on $X$, the obtained map is an isomorphism.
Considering~$\rc_1$ as a linear map $\GG \to \NS(X)$ in a standard way, and extending linearly the normalized Euler characteristic map to a map~$\tilde{\rs} \colon \GG \to \ZZ$
(if $X$ has a 0-cycle of degree 1, we can take $\tilde\rs$ to be the Euler characteristic map),
we obtain an isomorphism
\begin{equation*}
\GG \xrightarrow{\ \sim\ } \ZZ \oplus \NS(X) \oplus \ZZ,
\qquad 
v \mapsto (\rr(v), \rc_1(v), \tilde{\rs}(v)).
\end{equation*}
A simple Riemann--Roch computation shows that the form $\chi_-$ is given by
\begin{equation*}
\chi_-((r_1,D_1,s_1),(r_2,D_2,s_2)) = r_1 (K_X\cdot D_2) - r_2(K_X \cdot D_1),
\end{equation*}
where $K_X \in \NS(X)$ is the canonical class of $X$ and $\cdot$ stands for the intersection pairing on $\NS(X)$.
The kernel of $\chi_-$ is spanned by all $(0,D,s) \in  \ZZ \oplus \NS(X) \oplus \ZZ$ such that $K_X \cdot D = 0$, in particular, the rank of $\chi_-$ is 2.
Furthermore,
by Riemann--Roch we have
\begin{equation}\label{eq:chi-surface}
\chi((0,D_1,s_1),(0,D_2,s_2)) = - D_1 \cdot D_2.
\end{equation}
Therefore, $\bp_X := (0,0,1)$ is contained in the kernel of $\chi\vert_{\Ker\chi_-}$, and since the intersection pairing on the numerical Neron--Severi group $\NS(X)$ is nondegenerate, $\bp_X$ generates this kernel unless $K_X^2 = 0$. 
In the latter case, the kernel of $\chi\vert_{\Ker\chi_-}$ is generated by $\bp_X$ and $K_X$.
In particular, Lemma~\ref{lemma:surface-like-criterion}\eqref{item:r2} shows that $\GG$ is surface-like with $\bp_X$ a point-like class, and that $\bp_X$ is the unique point-like class unless $K_X^2 = 0$.
\end{example}

We say that a surface-like category $\cT$ (resp.\ pseudolattice $\GG$) is {\sf standard}, if $(\cT,\bp) = (\bD(X),\bp_X)$ for a surface $X$ (resp.\ $(\GG,\bp) = (\NGr(\bD(X)),\bp_X)$).
Next example shows that even a standard surface-like pseudolattice of Example~\ref{example:standard} may have sometimes a non-standard surface-like structure.

\begin{example}
Assume again $X$ is a smooth projective surface, $\cT = \bD(X)$, and $\GG = \NGr(\cT)$.
In the notation of Example~\ref{example:standard} set $\bp_K := (0,K_X,0)$.
If $K_X^2 = 0$ then $\bp_K$ is a point-like element, that gives a different surface-like structure on the pseudolattice $\GG$ and the category $\bD(X)$.
\end{example}

Further we will need an explicit form of a standard pseudolattice for surfaces $X = \PP^2$ and $X = \PP^1 \times \PP^1$, 
and $X = \FF_1$ (the Hirzebruch ruled surface).

\begin{example}\label{example:p2-p1p1}
If $X = \PP^2$ then the classes of the sheaves $(\cO_{\PP^2},\cO_{\PP^2}(1),\cO_{\PP^2}(2))$ form an exceptional basis in which the Gram matrix of the Euler form looks like
\begin{equation*}
\chi_{\PP^2} = 
\left(\begin{smallmatrix}
1 & 3 & 6 \\ 0 & 1 & 3 \\ 0 & 0 & 1
\end{smallmatrix}\right).
\end{equation*}
If $X = \PP^1 \times \PP^1$ for each $c \in \ZZ$ the classes of the sheaves $(\cO_{\PP^1 \times \PP^1},\cO_{\PP^1 \times \PP^1}(1,0),\cO_{\PP^1 \times \PP^1}(c,1),\cO_{\PP^1 \times \PP^1}(c+1,1))$ 
form an exceptional basis in which the Gram matrix of the Euler form looks like
\begin{equation*}
\chi_{\PP^1 \times \PP^1} = \left(
\begin{smallmatrix}
1 & 2 & 2c + 2 & 2c + 4 \\ 0 & 1 & 2c & 2c + 2 \\ 0 & 0 & 1 & 2 \\ 0 & 0 & 0 & 1
\end{smallmatrix}
\right).
\end{equation*}
If $X = \FF_1$ for each $c \in \ZZ$ the classes of the sheaves $(\cO_{\FF^1},\cO_{\FF_1}(f),\cO_{\FF_1}(s+cf),\cO_{\FF_1}(s+(c+1)f))$ (where $f$ is the class of a fiber and $s$ is the class of the $(-1)$-section)
form an exceptional basis in which the Gram matrix of the Euler form looks like
\begin{equation*}
\chi_{\FF_1} = \left(
\begin{smallmatrix}
1 & 2 & 2c + 1 & 2c + 3 \\ 0 & 1 & 2c-1 & 2c + 1 \\ 0 & 0 & 1 & 2 \\ 0 & 0 & 0 & 1
\end{smallmatrix}
\right).
\end{equation*}
Note that the left mutation of $\cO_{\FF_1}(s+(c+1)f)$ through $\cO_{\FF_1}(s+cf)$ is isomorphic to $\cO_{\FF_1}(s+(c-1)f)$ (up to a shift, hence all such exceptional collections are mutation-equivalent.
Note also, that in case $c = 1$ the left mutation of $\cO_{\FF_1}(s+f)$ through $\cO_{\FF_1}(f)$ is isomorphic to the structure sheaf of the $(-1)$-section, in particular its rank is zero.
\end{example}

The next examples come from non-commutative geometry.

\begin{example}
Let $\cT$ be the derived category of a non-commutative projective plane (see, for instance, \cite{KKO}).
Then the numerical Grothendieck group $\NGr(\cT)$ is isometric to $\NGr(\PP^2)$ (the numerical Grothendieck group of a commutative plane), hence $\cT$ is a surface-like category.
The same applies to noncommutative $\PP^1 \times \PP^1$ and other noncommutative deformations of surfaces.
\end{example}

From this perspective it would be interesting to classify directed quivers whose derived categories of representations are surface-like
(for quivers with at most four vertices this is done in~\cite{dTVdB}).
Note that each of these categories can be realized as a semiorthogonal component of the derived category of a smooth projective variety \cite{Or2}, see also~\cite{BR2}.
More generally, one can ask when a gluing \cite{KL,Or1,Or3} of two triangulated categories is surface-like.

\begin{example}\label{example:brauer}
Let $X$ be a smooth projective complex K3 surface and $\beta \in \Br(X)$ an element in the Brauer group. 
Let $\cT = \bD(X,\beta)$ be the twisted derived category.
Its numerical Grothendieck group $\NGr(X,\beta)$ can be described as follows (see \cite[Section~1]{HS} for details).
Analyzing the cohomology in the exponential sequence, one can write the Brauer group as an extension
\begin{equation*}
0 \to \frac{H^2(X,\QQ)}{\NS(X)_\QQ + H^2(X,\ZZ)} \to \Br(X) \to H^3(X,\ZZ)_{\mathrm{torsion}} \to 0.
\end{equation*}
For a K3 surface the right term vanishes, hence a Brauer class $\beta$ can be represented by a rational cohomology class $B \in H^2(X,\QQ)$
(called a B-field).
Then 
\begin{equation*}
\NGr(X,\beta) = \{ (r,D,s) \in \QQ \oplus \NS(X)_\QQ \oplus \QQ \mid r \in \ZZ,\ D + r B \in \NS(X),\ s + D \cdot B + r B^2/2 \in \ZZ \},
\end{equation*}
and the Euler form is given by the Mukai pairing $\chi((r_1,D_1,s_1),(r_2,D_2,s_2)) = r_1s_2 - D_1\cdot D_2 + s_1r_2$. 
In this case $\bp_X = (0,0,1)$ is still a point-like element.
\end{example}

See other examples of surface-like categories in~\cite{BP,dTP}.

\subsection{Rank function and Neron--Severi lattice}

Assume $(\GG,\bp)$ is a surface-like pseudolattice and let~$\bp \in \GG$ be its point-like element.
The linear functions $\chi(\bp,-)$ and $\chi(-,\bp)$ on $\GG$ coincide (by Definition~\ref{def:surface-like}(2)). 
We define the {\sf rank function} associated with the point-like element $\bp$ by
\begin{equation*}
\br(-) := \chi(\bp,-) = \chi(-,\bp).
\end{equation*}
Note that $\bp^\perp = {}^\perp\bp = \Ker\br$.

\begin{lemma}\label{lemma:ns}
If $\GG$ is a surface-like pseudolattice and $\bp$ is its point-like element, there is a complex
\begin{equation}\label{eq:rp-p}
\ZZ \xrightarrow{\ \bp\ } \GG \xrightarrow{\ \br\ } \ZZ
\end{equation}
with injective $\bp$ and, if $\GG$ is unimodular, surjective $\br$.
Its middle cohomology 
\begin{equation}\label{eq:def-ns}
\NS(\GG) := \bp^\perp/\bp
\end{equation}
is a finitely generated free abelian group of rank $\rk(\GG) - 2$.
\end{lemma}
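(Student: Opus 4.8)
The plan is to verify exactness of the complex~\eqref{eq:rp-p} at each of its three terms and then read off the consequences for $\NS(\GG)$. First I would check injectivity of the map $\ZZ \xrightarrow{\bp} \GG$: this is immediate since $\bp$ is assumed primitive, hence in particular nonzero and of infinite order, so multiplication by $\bp$ is injective on $\ZZ$. Next, the composition $\br \circ \bp$ sends $1 \mapsto \br(\bp) = \chi(\bp,\bp) = 0$ by Definition~\ref{def:surface-like}(1), so~\eqref{eq:rp-p} is indeed a complex. For the middle cohomology, by definition $\br = \chi(\bp,-)$, so $\Ker(\br) = \bp^\perp$; and since $\bp$ is primitive, the image of $\ZZ \xrightarrow{\bp} \GG$ is exactly the subgroup $\ZZ\bp$, which is a saturated (primitively generated) rank-$1$ subgroup of $\bp^\perp$. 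Hence the middle cohomology is $\bp^\perp/\ZZ\bp = \NS(\GG)$ as in~\eqref{eq:def-ns}, and it is torsion-free because $\ZZ\bp$ is saturated in $\GG$, hence saturated in $\bp^\perp$.

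It remains to handle the surjectivity of $\br$ when $\GG$ is unimodular and to compute the rank of $\NS(\GG)$. For surjectivity: unimodularity means $\chi \colon \GG \to \GG^\vee$ is an isomorphism, so the functional $\br = \chi(\bp,-) \in \GG^\vee$ is the image of $\bp$ under this isomorphism; since $\bp$ is primitive in $\GG$, its image $\br$ is primitive in $\GG^\vee$, and a primitive element of $\GG^\vee \cong \ZZ^{\rk\GG}$ is a surjective functional onto $\ZZ$. For the rank count, note that $\NS(\GG)$ is a subquotient of the free abelian group $\GG$, hence itself free (being torsion-free and finitely generated), and its rank is computed additively from the complex: $\rk\GG = \rk(\ZZ\bp) + \rk\NS(\GG) + \rk(\mathrm{image\ of\ }\br)$. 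Since $\bp$ is primitive (so $\rk(\ZZ\bp) = 1$) and $\br$ is nonzero (e.g.\ because $\chi$ is nondegenerate, $\chi(\bp,-)$ cannot be identically zero unless $\bp=0$; alternatively just quote that the image of $\br$ in $\ZZ$ has rank $1$), we get $\rk\NS(\GG) = \rk\GG - 2$.

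The only genuinely delicate point — and the step I would be most careful about — is whether nondegeneracy of $\chi$ alone suffices to conclude $\br \ne 0$ without assuming unimodularity; but this is automatic, since $\br \equiv 0$ would mean $\chi(\bp,v) = 0$ for all $v$, contradicting that the left kernel of $\chi$ is trivial. So the image of $\br$ always has rank exactly~$1$, the rank formula holds unconditionally, and the unimodularity hypothesis is needed only to upgrade ``$\br$ has rank-$1$ image'' to ``$\br$ is surjective onto $\ZZ$.'' Everything else is bookkeeping with saturated subgroups, and I would simply remark that $\ZZ\bp \subseteq \bp^\perp$ is a saturated subgroup of a saturated subgroup of $\GG$, hence saturated in $\GG$, so all the quotients in sight remain torsion-free and free of the expected ranks.
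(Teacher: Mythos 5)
Your proposal is correct and follows essentially the same route as the paper: injectivity from primitivity (equivalently torsion-freeness), $\br(\bp)=\chi(\bp,\bp)=0$ for the complex condition, nondegeneracy of $\chi$ to get $\br\ne 0$ and hence the rank count, and the unimodularity argument for surjectivity. Your packaging of the last step (a primitive vector maps under the isomorphism $\chi\colon\GG\to\GG^\vee$ to a primitive, hence surjective, functional) is just a rephrasing of the paper's divisibility argument ($\tfrac1d\br\in\GG^\vee$ would force $\bp$ to be divisible by $d$), so there is nothing further to add.
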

\begin{proof}
We have $\br(\bp) = \chi(\bp,\bp) = 0$ by Definition~\ref{def:surface-like}(1), hence~\eqref{eq:rp-p} is a complex.
The first map in~\eqref{eq:rp-p} is injective since $\GG$ is torsion-free.
The second map is nonzero, since $\chi$ is nondegenerate on~$\GG$.
If its image is $d\ZZ \subset \ZZ$ with $d \ge 2$ (this, up to a sign, is the same $d$ as in~\eqref{eq:chi-matrix}), then $\frac1d \br$ is a well defined element of~$\GG^\vee$.
If $\GG$ is unimodular, then there is an element $v \in \GG$ such that $d \cdot \chi(v,-) = \chi(\bp,-)$.
Therefore $\bp - d \cdot v$ is in the kernel of $\chi$, hence is zero, hence $\bp$ is not primitive.
This contradiction shows that $\br$ is surjective for unimodular $\GG$.

The group $\bp^\perp$ is torsion-free of rank $\rk(\GG) - 1$ since $\br \ne 0$, and the group $\NS(\GG)$ is torsion-free of rank $\rk(\GG) - 2$ since $\bp$ is primitive.
\end{proof}

The unimodularity assumption is not necessary for the surjectivity of $\br$, however $\br$ is not surjective in general.
Indeed, if $(\GG,\bp_X)$ is the surface-like pseudolattice of Example~\ref{example:brauer}, the index of $\br(\GG)$ in $\ZZ$ equals the order of $\beta$ in the Brauer group $\Br(X)$. 

The group $\NS(\GG)$ defined in~\eqref{eq:def-ns} is called {\sf Neron--Severi} group of a surface-like pseudolattice $\GG$.

\begin{lemma}\label{lemma:chi-ker-r}
If $\GG$ is a surface-like pseudolattice and $\bp$ is its point-like element, then the form $\chi$ induces a nondegenerate symmetric bilinear form on $\NS(\GG)$.
In other words, there is a unique nondegenerate symmetric form $q \colon \Sym^2 (\NS(\GG)) \to \ZZ$ such that the diagram
\begin{equation*}
\xymatrix{
\bp^\perp \otimes \bp^\perp
\ar@{->>}[rr] \ar[dr]_\chi &&
\NS(\GG) \otimes \NS(\GG) \ar[dl]^{-q} \\
& \ZZ
}
\end{equation*}
is commutative.
The lattice $\GG$ is unimodular if and only if $\br$ is surjective and $q$ is unimodular.
\end{lemma}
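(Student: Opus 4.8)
The plan is to verify first that $\chi$ restricted to $\bp^\perp$ factors through the quotient $\bp^\perp \twoheadrightarrow \NS(\GG) = \bp^\perp/\bp$, and then to trace through the unimodularity statement via the complex \eqref{eq:rp-p}. For the factorization, I note that $\chi$ is symmetric on $\bp^\perp$ by Definition~\ref{def:surface-like}(3), so there is no ambiguity between left and right radicals there. I need to show that $\bp$ lies in the radical of $\chi|_{\bp^\perp}$: for any $v \in \bp^\perp$ we have $\chi(\bp,v) = \br(v) = 0$ and $\chi(v,\bp) = \br(v) = 0$, using $v \in \bp^\perp = {}^\perp\bp = \Ker\br$ and Definition~\ref{def:surface-like}(2). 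Hence $\chi|_{\bp^\perp}$ descends to a well-defined symmetric form on $\NS(\GG)$; I define $q := -\chi|_{\NS(\GG)}$ so that the displayed triangle commutes, which also makes $q$ unique since $\bp^\perp \twoheadrightarrow \NS(\GG)$ is surjective. The sign is a cosmetic normalization (chosen, as in Example~\ref{example:standard}, so that $q$ becomes the intersection pairing in the standard case, cf.~\eqref{eq:chi-surface}).

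Next I must prove nondegeneracy of $q$ on $\NS(\GG)$. Here the key input is nondegeneracy of $\chi$ on all of $\GG$. Suppose $\bar v \in \NS(\GG)$ is in the radical of $q$, i.e.\ $\chi(v,w) = 0$ for all $w \in \bp^\perp$, where $v \in \bp^\perp$ is a lift. I want to conclude $v \in \ZZ\bp$. The element $v$ then lies in ${}^\perp(\bp^\perp)$, the left orthogonal of the hyperplane $\bp^\perp = \Ker\br$. Since $\bp^\perp$ is a hyperplane (corank~1), its left orthogonal inside $\GG \otimes \QQ$ is at most one-dimensional; and $\bp$ itself lies in it because $\chi(\bp, w) = \br(w) = 0$ for $w \in \bp^\perp$. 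So ${}^\perp(\bp^\perp) \otimes \QQ = \QQ\bp$ (it is nonzero: if it were zero then $\chi$ would be nondegenerate on a subspace of corank one paired against all of $\GG$, but more directly $\bp$ is a nonzero element of it). Therefore $v \in \QQ\bp \cap \GG = \ZZ\bp$ since $\bp$ is primitive, giving $\bar v = 0$ in $\NS(\GG)$. This shows $q$ has trivial radical; being a symmetric $\ZZ$-valued form on a finitely generated free abelian group with trivial radical, it is nondegenerate.

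Finally, the unimodularity claim: $\GG$ is unimodular iff $\br$ is surjective and $q$ is unimodular. The strategy is to filter $\GG$ by the complex $\ZZ \xrightarrow{\bp} \GG \xrightarrow{\br} \ZZ$ and compute discriminants multiplicatively. Concretely, choose a basis $v_0,\dots,v_{n-1}$ adapted to the flag $\ZZ\bp \subset \bp^\perp \subset \GG$ as in \eqref{eq:chi-matrix}, with $v_{n-1} = \bp$, $v_1,\dots,v_{n-1}$ a basis of $\bp^\perp$, and $\br(v_0) = d > 0$ the index of $\br(\GG)$ in $\ZZ$. Then the Gram matrix has the block shape \eqref{eq:chi-matrix}: the $(n-1,n-1)$ entry is $0$, the first row and column meet the last row/column in $\pm d$, and the central $(n-2)\times(n-2)$ block is exactly the Gram matrix of $-q$ on the basis $\bar v_1,\dots,\bar v_{n-2}$ of $\NS(\GG)$. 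Expanding $\det$ of \eqref{eq:chi-matrix} along the last row and then the last column gives $\det(\chi) = \pm d^2 \det(c_{ij}) = \pm d^2 \det(q)$. Hence $|\det\chi| = d^2 \cdot |\det q|$, and this equals $1$ iff $d = 1$ (i.e.\ $\br$ surjective) and $|\det q| = 1$ (i.e.\ $q$ unimodular).

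The main obstacle is the nondegeneracy step: one has to be careful that "$\chi$ nondegenerate on $\GG$" only immediately gives that the radical of $q$ lifts into ${}^\perp(\bp^\perp)$, and one must separately pin down that this left orthogonal is exactly $\ZZ\bp$ using primitivity of $\bp$ and the corank-one dimension count — if $\bp$ failed to be primitive or $\bp^\perp$ had larger corank this would break. The determinant bookkeeping in the last part is routine given the normal form \eqref{eq:chi-matrix}, which the text has already established.
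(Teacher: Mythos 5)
Your proof is correct, and the first part (symmetry of $\chi$ on $\bp^\perp$, the fact that $\bp$ lies in its radical, hence descent and uniqueness of $q$) is exactly the paper's argument. Where you diverge is in the nondegeneracy and unimodularity claims. The paper treats both structurally: it forms the commutative ladder from the complex $\ZZ \xrightarrow{\bp} \GG \xrightarrow{\br} \ZZ$ to its dual $\ZZ \xrightarrow{\br} \GG^\vee \xrightarrow{\bp} \ZZ$ via $\chi$, identifies the induced map on middle cohomology with $-q \colon \NS(\GG) \to \NS(\GG)^\vee$, deduces nondegeneracy from injectivity, and gets the unimodularity equivalence from the Snake Lemma applied to the cokernels. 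You instead argue nondegeneracy by a direct dimension count: the radical of $q$ lifts into ${}^\perp(\bp^\perp)$, which over $\QQ$ is one-dimensional by nondegeneracy of $\chi$ and contains $\bp$, so it equals $\QQ\bp$, and primitivity of $\bp$ pins the integral intersection down to $\ZZ\bp$; and you get the unimodularity equivalence by expanding the determinant of the normal form~\eqref{eq:chi-matrix} along the last row and column to obtain $|\det\chi| = d^2\,|\det q|$. Both routes are sound (the normal form you rely on is available because $\bp^\perp = \Ker\br$ is saturated and $\bp$ is primitive in it, so the adapted basis exists); your computation is more elementary and makes the factor $d^2$ explicit, while the paper's homological phrasing avoids choosing a basis and packages nondegeneracy and unimodularity into one diagram. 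The only point worth flagging is that you should make sure the lift $v$ of a radical element is well defined up to $\ZZ\bp$, which is harmless precisely because $\bp$ is already known to lie in the radical of $\chi\vert_{\bp^\perp}$.
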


Note the sign convention chosen in order to make $q$ equal to the intersection pairing in the standard example (see~\eqref{eq:chi-surface}).
In terms of~\eqref{eq:chi-matrix}, the middle symmetric submatrix $(c_{ij})$ of $\chi$ is the Gram matrix of $-q$.

\begin{proof}
Recall that $\chi_-$ vanishes on $\bp^\perp$ by Definition~\ref{def:surface-like}(3), hence $\chi$ is symmetric on $\bp^\perp$.
Furthermore, if $v \in \bp^\perp$ then $\chi(\bp,v) = \br(v) = 0$, hence $\bp$ is in the kernel of $\chi$ on $\bp^\perp$, and thus $\chi$ induces a symmetric form $q$ on $\NS(\GG)$ such that the above diagram commutes.

Finally, consider the diagram
\begin{equation*}
\xymatrix{
\ZZ \ar@{=}[d] \ar[r]^-\bp & \GG \ar[r]^-{\br} \ar[d]^\chi & \ZZ \ar@{=}[d] \\
\ZZ \ar[r]^-{\br} & \GG^\vee \ar[r]^-{\bp} & \ZZ
}
\end{equation*}
which is commutative by definition of $\br$, hence is a morphism of complexes.
The map induced on the middle cohomologies of complexes $\NS(\GG) \to \NS(\GG)^\vee$ coincides with the map $-q$ by the definition of the latter.
Nondegeneracy of $\chi$ implies injectivity of that map, hence $q$ is nondegenerate.

If~$\GG$ is unimodular then the middle vertical arrow is an isomorphism, hence the induced map $-q$ on the cohomologies is an isomorphism, hence $q$ is unimodular.
On the other hand, $\br$ is surjective by Lemma~\ref{lemma:ns}.
Conversely, if $\br$ is surjective, then the Snake Lemma implies that the cokernels of $\chi \colon \GG \to \GG^\vee$ and $q \colon \NS(\GG) \to \NS(\GG^\vee)$ are isomorphic, so if $q$ is unimodular, so is $\chi$.
\end{proof}

The lattice $(\NS(\GG),q)$ is called {\sf Neron--Severi lattice} of a surface-like pseudolattice $\GG$.
The filtration $0 \subset \ZZ\bp \subset \bp^\perp \subset \GG$ can be thought of as an analog of the topological filtration on $\GG$.
In Corollary~\ref{corollary:serre-unipotent} below we show that the Serre operator of $\GG$ (if exists) preserves this filtration and acts on its factors as the identity.

\subsection{Canonical class}\label{subsection:canonical-class}

In this section we show how one can define the canonical class of a surface-like pseudolattice.
It is always well defined as a linear function on $\NS(\GG)$, or as an element of $\NS(\GG) \otimes \QQ$,
and under unimodularity assumption, also as an element of $\NS(\GG)$.

The rank map induces a map 
\begin{equation}\label{eq:def-lambda}
\bl \colon \bw2 \GG \to \bp^\perp, 
\qquad 
F_1 \wedge F_2 \mapsto \br(F_1)F_2 - \br(F_2)F_1.
\end{equation}
Its kernel is $\bw2(\bp^\perp)$, and if the rank map is surjective, the map $\bl$ is surjective too.

\begin{lemma}\label{lemma:canonical-class}
There is a unique element $K_\GG \in \NS(\GG) \otimes \QQ$ such that for all $v_1,v_2 \in \GG$ we have
\begin{equation}\label{eq:chi-minus-k}
\chi_-(v_1,v_2) = \chi(v_1,v_2) - \chi(v_2,v_1) = -q(K_\GG,\bl(v_1,v_2)).
\end{equation} 
If $\GG$ is unimodular, then $K_\GG \in \NS(\GG)$ is integral.
\end{lemma}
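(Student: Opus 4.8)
The plan is to show that the form $\chi_-$ factors, up to an explicit scalar, through the map $\bl$ of~\eqref{eq:def-lambda}, and then to invoke the nondegeneracy of $q$ (Lemma~\ref{lemma:chi-ker-r}) to rewrite it as a pairing with a vector $K_\GG$. The key technical point is that the three conditions in Definition~\ref{def:surface-like} conspire to make this factorization work.

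First I would fix a convenient splitting of the rank map. Since $\chi$ is nondegenerate, $\br\neq 0$; let $d\geq 1$ generate $\br(\GG)\subseteq\ZZ$ and fix $w\in\GG$ with $\br(w)=d$. Then every $v\in\GG$ satisfies $\br(v)/d\in\ZZ$, so we may write $v=\frac{\br(v)}{d}w+v^\flat$ with $v^\flat:=v-\frac{\br(v)}{d}w\in\Ker\br=\bp^\perp$.

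Next comes the main step. For $v_1,v_2\in\GG$ I expand $\chi_-(v_1,v_2)$ using this splitting: the $\chi_-(w,w)$ term vanishes by antisymmetry, the $\chi_-(v_1^\flat,v_2^\flat)$ term vanishes by Definition~\ref{def:surface-like}(3) since both arguments lie in $\bp^\perp$, and collecting the two cross terms together with the identity $\br(v_1)v_2^\flat-\br(v_2)v_1^\flat=\br(v_1)v_2-\br(v_2)v_1=\bl(v_1,v_2)$ yields
\[
\chi_-(v_1,v_2)=\tfrac1d\,\chi_-\bigl(w,\bl(v_1,v_2)\bigr).
\]
Now $\bl(v_1,v_2)\in\bp^\perp$, and $\chi_-(w,\bp)=\br(w)-\br(w)=0$ by Definition~\ref{def:surface-like}(2), so the linear functional $\chi_-(w,-)$ on $\bp^\perp$ kills $\ZZ\bp$ and descends to a functional $\varphi\in\NS(\GG)^\vee$; it is independent of the choice of $w$, since two admissible choices $w,w'$ satisfy $w-w'\in\bp^\perp$ and $\chi_-(w-w',u)=0$ for $u\in\bp^\perp$ by Definition~\ref{def:surface-like}(3). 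I then define $K_\GG\in\NS(\GG)\otimes\QQ$ to be the unique element with $q(K_\GG,-)=-\tfrac1d\varphi$, using that $q$ is nondegenerate hence invertible over $\QQ$; reading $\bl(v_1,v_2)$ for its class in $\NS(\GG)$, the displayed formula becomes exactly~\eqref{eq:chi-minus-k}. Uniqueness follows because $\bl(w,u)=\br(w)u-\br(u)w=d\,u$ for all $u\in\bp^\perp$, so the image of $\bl$ contains $d\,\bp^\perp$, whose image in $\NS(\GG)$ spans $\NS(\GG)\otimes\QQ$, on which $q$ is nondegenerate. Finally, if $\GG$ is unimodular then $\br$ is surjective by Lemma~\ref{lemma:ns}, so $d=1$ and $\varphi=\chi_-(w,-)$, and $q$ is unimodular by Lemma~\ref{lemma:chi-ker-r}, so $q\colon\NS(\GG)\to\NS(\GG)^\vee$ is an isomorphism and $K_\GG=-q^{-1}(\varphi)\in\NS(\GG)$.

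The main obstacle is the factorization identity above: one has to see that, after the splitting, antisymmetry of $\chi_-$ and its vanishing on $\bp^\perp$ leave precisely the two cross terms, and that those reassemble into $\tfrac1d\chi_-(w,\bl(v_1,v_2))$. Everything after that --- passing from a functional on $\NS(\GG)$ to a vector via $q$, the spanning statement needed for uniqueness, and the unimodular refinement --- is routine linear algebra over $\ZZ$ and $\QQ$.
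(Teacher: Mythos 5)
Your proof is correct. It proves the same factorization that the paper does, but by a genuinely more explicit route: the paper works diagrammatically, showing that $\chi_-$ vanishes on $\Ker(\bl)=\bw2(\bp^\perp)$ (Definition~\ref{def:surface-like}(3)) and then on the multiples of $\bp$ in $\Ima(\bl)$ (Definition~\ref{def:surface-like}(2)), so that it descends to a functional on the finite-index subgroup $\Ima(\bar\bl)\subset\NS(\GG)$, which is then extended uniquely to a $\QQ$-valued functional on all of $\NS(\GG)$ before inverting $q$. You instead choose a vector $w$ with $\br(w)=d$ generating $\br(\GG)$, split every $v$ as $\frac{\br(v)}{d}w+v^\flat$, and compute directly that $\chi_-(v_1,v_2)=\frac1d\,\chi_-(w,\bl(v_1,v_2))$ --- the same two axioms kill the diagonal and the $\bp^\perp\times\bp^\perp$ terms, but the outcome is a functional $\chi_-(w,-)$ defined on all of $\bp^\perp$ from the start, so no extension from a finite-index subgroup is needed, and you get the explicit description $q(K_\GG,-)=-\frac1d\chi_-(w,-)$ as a bonus (which also makes the integrality statement in the unimodular case, where $d=1$ and $q$ is unimodular, completely transparent). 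Your uniqueness argument via $\bl(w\wedge u)=d\,u$ for $u\in\bp^\perp$ is the same finite-index observation the paper uses, just made concrete. Both approaches are sound; yours trades the paper's structural diagram for an explicit formula, which is arguably easier to check and slightly more informative.
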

\begin{proof}
Denote by $\bar\bl \colon \bw2\GG \to \NS(\GG)$ the composition of $\bl$ with the projection $\bp^\perp \to \NS(\GG)$.
Consider the diagram
\begin{equation*}
\xymatrix{
\bw2\GG \ar@{->>}[r]^-\bl \ar[d]_{\chi_-} & \Ima (\bl) \ar@{->>}[r] \ar@{-->}[dl] & \Ima(\bar\bl) \ar@{^{(}->}[r] \ar@{..>}[dll] & \NS(\GG)
\\
\ZZ
}
\end{equation*}
Our goal is to extend the map $\chi_-$ to a linear map from $\NS(\GG)$.
First, let us show that it extends to a dashed arrow.
For this it is enough to note that $\chi_-$ vanishes on $\Ker(\bl) = \bw2(\bp^\perp)$ by Definition~\ref{def:surface-like}(3).
Next, to construct a dotted arrow it is enough to check that the dashed arrow vanishes on the kernel of the map $\Ima(\bl) \twoheadrightarrow \Ima(\bar\bl)$.
Clearly, this kernel is generated by an appropriate multiple of $\bp$.
If $v \in \GG$ is such that $\br(v) \ne 0$, then $\bl(v \wedge \bp) = \br(v)\bp$, and $\chi_-(v,\bp) = 0$ by Definition~\ref{def:surface-like}(2), hence the dashed arrow vanishes on $\br(v)\bp$.
Finally, $\Ima(\bl) \subset \bp^\perp$ is a subgroup of finite index, hence $\Ima(\bar\bl) \subset \NS(\GG)$ is a subgroup of finite index, 
hence a linear map from $\Ima(\bar\bl)$ to $\ZZ$ extends uniquely to a linear map $\NS(\GG) \to \QQ$, and since the form $q$ on $\NS(\GG)$ is nondegenerate, 
there is a unique element $K_\GG$ such that~\eqref{eq:chi-minus-k} holds.

If $\GG$ is unimodular, then $\Ima(\bl) = \bp^\perp$, $\Ima(\bar\bl) = \NS(\GG)$, hence the dashed arrow is a map $\NS(\GG) \to \ZZ$.
By unimodularity of $q$, it is given by a scalar product with an integral vector $K_\GG \in \NS(\GG)$.
\end{proof}

The sign convention in~\eqref{eq:chi-minus-k} is chosen in order to agree with the standard example.
We call the element $K_\GG \in \NS(\GG)$ {\sf the canonical class of $\GG$}. 
If $K_\GG \in \NS(\GG) \subset \NS(\GG) \otimes \QQ$, we say that the canonical class of~$\GG$ is {\sf integral}.
In terms of~\eqref{eq:chi-matrix} the canonical class is given by the formula $K_\GG\cdot v_i = (b'_i - b_i)/d$.

\subsection{Serre operator}

Assume a surface-like pseudolattice has a Serre operator $\rS_\GG$.

\begin{lemma}\label{lemma:serre-p}
Any point-like element in a surface-like pseudolattice is fixed by the Serre operator, i.e., $\rS_\GG (\bp) = \bp$.
Analogously, $\rS_\GG$ fixes the corresponding rank function, i.e. $\br (\rS_\GG(-)) = \br(-)$.
\end{lemma}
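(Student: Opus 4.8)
The plan is to read off $\rS_\GG(\bp) = \bp$ directly from the point-like axioms combined with the description~\eqref{eq:serre-symmetrize} of $\chi_-$ in terms of the Serre operator. First I would specialize~\eqref{eq:serre-symmetrize} to the second argument $\bp$, obtaining $\chi_-(v, \bp) = \chi(v, (1 - \rS_\GG)\bp)$ for every $v \in \GG$. The left-hand side vanishes identically: by Definition~\ref{def:surface-like}(2) we have $\chi_-(\bp, -) = 0$, and since $\chi_-$ is skew-symmetric this gives $\chi_-(v, \bp) = -\chi_-(\bp, v) = 0$. Hence $\chi(v, (1 - \rS_\GG)\bp) = 0$ for all $v \in \GG$; as $\chi$ is nondegenerate on $\GG$, this forces $(1 - \rS_\GG)\bp = 0$, i.e.\ $\rS_\GG(\bp) = \bp$.

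For the assertion about the rank function I would argue straight from the defining relation $\chi(v_1, v_2) = \chi(v_2, \rS_\GG(v_1))$ of the Serre operator. Taking $v_1 = v$ and $v_2 = \bp$ yields $\chi(v, \bp) = \chi(\bp, \rS_\GG(v))$. The left-hand side equals $\br(v)$ and the right-hand side equals $\br(\rS_\GG(v))$, using the two descriptions $\br = \chi(\bp, -) = \chi(-, \bp)$ of the rank function, which coincide by Definition~\ref{def:surface-like}(2). Hence $\br(\rS_\GG(-)) = \br(-)$.

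There is essentially no obstacle here; the one point worth emphasizing is that the argument genuinely uses the symmetry condition Definition~\ref{def:surface-like}(2) for $\bp$ --- not merely the isotropy condition Definition~\ref{def:surface-like}(1) --- since it is exactly this condition that, together with~\eqref{eq:serre-symmetrize} and nondegeneracy of $\chi$, pins down the action of $\rS_\GG$ on $\bp$. This is also consistent with the unipotence of $\rS_\GG$ recorded in Remark~\ref{remark:surface-type-like}: a point-like element sits in the smallest step of the filtration $0 \subset \ZZ\bp \subset \bp^\perp \subset \GG$, on whose graded pieces $\rS_\GG$ acts trivially (cf.\ Corollary~\ref{corollary:serre-unipotent}).
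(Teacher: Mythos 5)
Your proof is correct and follows essentially the same route as the paper: the paper's argument is exactly the observation that nondegeneracy of $\chi$ together with~\eqref{eq:serre-symmetrize} gives $\Ker\chi_- = \Ker(1-\rS_\GG)$, so Definition~\ref{def:surface-like}(2) forces $\rS_\GG(\bp)=\bp$, which is what you spell out. Your direct computation $\br(\rS_\GG(v)) = \chi(\bp,\rS_\GG(v)) = \chi(v,\bp) = \br(v)$ for the second claim (which the paper leaves as ``analogously'') is also correct.
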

\begin{proof}
Since $\chi$ is nondegenerate, \eqref{eq:serre-symmetrize} implies $\Ker \chi_- = \Ker(1 - \rS_\GG)$.
Therefore it follows from Definition~\ref{def:surface-like}(2) that $\rS_\GG(\bp) = \bp$.
\end{proof}

It follows that the Serre operator induces an automorphism of complex~\eqref{eq:rp-p}, hence an automorphism of the Neron--Severi lattice $\NS(\GG)$.

\begin{lemma}\label{lemma:serre-identity}
The automorphism of the group $\NS(\GG)$ induced by the Serre operator is the identity map.
\end{lemma}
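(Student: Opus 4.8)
The plan is to prove the equivalent statement that $(\rS_\GG - 1)(\bp^\perp) \subseteq \ZZ\bp$: since $\NS(\GG) = \bp^\perp/\bp$, this says exactly that the automorphism induced by $\rS_\GG$ on $\NS(\GG)$ is the identity. First I would check this even makes sense: by Lemma~\ref{lemma:serre-p} we have $\br \circ \rS_\GG = \br$, so $\rS_\GG$ preserves $\bp^\perp = \Ker\br$, and hence so does $\rS_\GG - 1$.

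Next I would reduce the desired inclusion to an orthogonality statement. By Lemma~\ref{lemma:chi-ker-r} the form $\chi$ is symmetric on $\bp^\perp$ and the induced form $-q$ on $\NS(\GG)$ is nondegenerate; therefore the radical of $\chi\vert_{\bp^\perp}$ is exactly $\ZZ\bp$ (it contains $\ZZ\bp$ because $\chi(\bp,-) = \br$ vanishes on $\bp^\perp$, and it can be no larger since $q$ is nondegenerate on the quotient $\NS(\GG)$). So it suffices to show that for every $v\in\bp^\perp$ the element $(\rS_\GG-1)v \in \bp^\perp$ is $\chi$-orthogonal to all of $\bp^\perp$.

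The core is then a short computation. Fix $v, w \in \bp^\perp$ and write
\[
\chi\bigl((\rS_\GG-1)v,\, w\bigr) = \chi(\rS_\GG v, w) - \chi(v,w).
\]
Since $v\in\bp^\perp$ and $\rS_\GG$ preserves $\bp^\perp$, the elements $\rS_\GG v$ and $w$ both lie in $\bp^\perp$, where $\chi$ is symmetric, so $\chi(\rS_\GG v, w) = \chi(w, \rS_\GG v)$; and the defining property of the Serre operator ($\chi(v_1,v_2)=\chi(v_2,\rS_\GG v_1)$) gives $\chi(w, \rS_\GG v) = \chi(v, w)$. Hence $\chi((\rS_\GG-1)v, w) = 0$, and as $w$ ranges over $\bp^\perp$ we conclude that $(\rS_\GG-1)v$ lies in the radical of $\chi\vert_{\bp^\perp}$, i.e.\ in $\ZZ\bp$, which is what we wanted.

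I do not expect a genuine obstacle: the two inputs that need care are the use of Lemma~\ref{lemma:serre-p} to see that $\rS_\GG$ stabilizes $\bp^\perp$, and the use of the nondegeneracy of $q$ from Lemma~\ref{lemma:chi-ker-r} to identify the radical of $\chi\vert_{\bp^\perp}$ with $\ZZ\bp$. Note that the argument uses neither the rank-$2$ hypothesis on $\chi_-$ nor the dichotomy of Lemma~\ref{lemma:surface-like-criterion}, so it also covers trivially the Calabi--Yau situation $\chi_- = 0$, where in fact $\rS_\GG = 1$ already on all of $\GG$.
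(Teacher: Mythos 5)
Your proof is correct and takes essentially the same route as the paper: the paper's proof computes $\chi(v_1,(1-\rS_\GG)v_2)=\chi_-(v_1,v_2)=0$ for $v_1,v_2\in\bp^\perp$ using the Serre property together with Definition~\ref{def:surface-like}(3), which is your computation with the two arguments swapped, and then identifies the radical of $\chi\vert_{\bp^\perp}$ with $\ZZ\bp$ via nondegeneracy exactly as you do.
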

\begin{proof}
We have to check that $1-\rS_\GG$ acts by zero on $\NS(\GG)$, or equivalently, that it takes $\bp^\perp$ to $\ZZ\bp$.
For this note that if $v_1,v_2 \in \bp^\perp$ then
\begin{equation*}
\chi(v_1,(1-\rS_\GG)v_2) = \chi_-(v_1,v_2) = 0
\end{equation*}
by~\eqref{eq:serre-symmetrize} and Definition~\ref{def:surface-like}(3).
On the other hand, since $\chi$ is nondegenerate, its kernel on~$\bp^\perp$ is generated by $\bp$.
\end{proof}

\begin{corollary}\label{corollary:serre-unipotent}
If $\GG$ is a surface-like pseudolattice then $\rS_\GG$ is unipotent, and moreover $(1 - \rS_\GG)^3 = 0$.
\end{corollary}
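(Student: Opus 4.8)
Corollary \ref{corollary:serre-unipotent} claims that on a surface-like pseudolattice the Serre operator $\rS_\GG$ is unipotent, with $(1-\rS_\GG)^3 = 0$.

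The plan is to exploit the filtration $0 \subset \ZZ\bp \subset \bp^\perp \subset \GG$ from Lemma~\ref{lemma:ns}, which the Serre operator preserves by the remarks following Lemma~\ref{lemma:serre-p} (namely $\rS_\GG(\bp)=\bp$ by Lemma~\ref{lemma:serre-p}, and $\rS_\GG(\bp^\perp)=\bp^\perp$ since $\rS_\GG$ fixes the rank function, also by Lemma~\ref{lemma:serre-p}). Set $N := 1 - \rS_\GG$. First I would record that $N$ kills $\bp$, i.e. $N(\ZZ\bp) = 0$. Second, Lemma~\ref{lemma:serre-identity} says that $\rS_\GG$ acts as the identity on $\NS(\GG) = \bp^\perp/\bp$, which translates to $N(\bp^\perp) \subseteq \ZZ\bp$. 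Third, since $\rS_\GG$ also acts as the identity on $\GG/\bp^\perp \cong \ZZ$ (because it fixes the rank function $\br$, whose kernel is exactly $\bp^\perp$), we get $N(\GG) \subseteq \bp^\perp$.

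Chaining these three inclusions gives the result directly: for any $v \in \GG$ we have $N(v) \in \bp^\perp$, hence $N^2(v) = N(N(v)) \in \ZZ\bp$, hence $N^3(v) = N(N^2(v)) \in N(\ZZ\bp) = 0$. Therefore $(1-\rS_\GG)^3 = N^3 = 0$, which in particular shows $\rS_\GG = 1 - N$ is unipotent. I would write this as a short three-line argument, perhaps displaying the chain of inclusions
\begin{equation*}
N(\GG) \subseteq \bp^\perp,
\qquad
N(\bp^\perp) \subseteq \ZZ\bp,
\qquad
N(\ZZ\bp) = 0.
\end{equation*}

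There is essentially no obstacle here: the corollary is a formal consequence of the two preceding lemmas once one observes that $\rS_\GG$ acts trivially on all three graded pieces of the filtration. The only point requiring a word of care is the claim $N(\GG)\subseteq\bp^\perp$, i.e. that $\rS_\GG$ fixes $\br$; but this is precisely the second assertion of Lemma~\ref{lemma:serre-p}, so it may simply be cited. If one wanted to avoid invoking that half of Lemma~\ref{lemma:serre-p} separately, one could instead note directly from \eqref{eq:serre-symmetrize} that $\chi(\bp, Nv) = \chi_-(\bp,v) = 0$ for all $v$ by Definition~\ref{def:surface-like}(2), which again gives $Nv \in \bp^\perp$.
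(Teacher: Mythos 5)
Your proposal is correct and is essentially the paper's own argument: both proofs observe that $\rS_\GG$ preserves the filtration $0 \subset \ZZ\bp \subset \bp^\perp \subset \GG$ (via Lemma~\ref{lemma:serre-p}) and acts as the identity on each graded piece (via Lemmas~\ref{lemma:serre-p} and~\ref{lemma:serre-identity}), whence $(1-\rS_\GG)^3=0$. Your write-up just makes the chain of inclusions for $N=1-\rS_\GG$ explicit, which the paper leaves implicit.
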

\begin{proof}
Indeed, by Lemma~\ref{lemma:serre-p} the three-step filtration $0 \subset \ZZ\bp \subset \bp^\perp \subset \GG$ is fixed by $\rS_\GG$, and the induced action on its factors is the identity by Lemma~\ref{lemma:serre-identity}.
\end{proof}

The relation of the canonical class and the Serre operator is given by the following

\begin{lemma}\label{lemma:lambda-k}
For any $v \in \GG$ we have $\bl(v,\rS_\GG(v)) = \br(v)^2K_\GG \pmod \bp$.
\end{lemma}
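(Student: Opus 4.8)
The plan is to reduce the identity $\bl(v,\rS_\GG(v)) \equiv \br(v)^2 K_\GG \pmod{\bp}$ to the defining property~\eqref{eq:chi-minus-k} of $K_\GG$ together with the nondegeneracy of the form $q$ on $\NS(\GG)$. First I would note that both sides of the claimed congruence lie in $\bp^\perp$ (for the left-hand side this is because $\bl$ takes values in $\bp^\perp$ by~\eqref{eq:def-lambda}; for the right-hand side this is because $K_\GG \in \NS(\GG)\otimes\QQ = (\bp^\perp/\bp)\otimes\QQ$, and $\br(v)^2 K_\GG$ is integral by Lemma~\ref{lemma:lambda-k}'s own hypothesis being vacuous here — more carefully, the statement is a congruence mod $\bp$, so one should read it as an equality in $\NS(\GG)\otimes\QQ$, or in $\NS(\GG)$ when $\GG$ is unimodular). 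Since $q$ is nondegenerate on $\NS(\GG)$, it suffices to check that $q$ pairs both sides equally against every $w \in \NS(\GG)$, and by linearity it is enough to test against classes of the form $\bar\bl(v_1,v_2)$ for $v_1,v_2\in\GG$, since $\Ima(\bar\bl)$ has finite index in $\NS(\GG)$.

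So the core computation is: for all $v,v_1,v_2\in\GG$,
\begin{equation*}
q\big(\overline{\bl(v,\rS_\GG(v))},\,\bar\bl(v_1,v_2)\big) = \br(v)^2\, q\big(K_\GG,\bar\bl(v_1,v_2)\big).
\end{equation*}
By~\eqref{eq:chi-minus-k} the right-hand side equals $-\br(v)^2\,\chi_-(v_1,v_2)$. For the left-hand side I would use Lemma~\ref{lemma:chi-ker-r}: since $\bl(v,\rS_\GG(v))$ and $\bl(v_1,v_2)$ both lie in $\bp^\perp$, we have $q\big(\bl(v,\rS_\GG(v)),\bl(v_1,v_2)\big) = -\chi\big(\bl(v,\rS_\GG(v)),\bl(v_1,v_2)\big)$, and then expand $\bl$ on the second slot via $\bl(v_1,v_2) = \br(v_1)v_2 - \br(v_2)v_1$. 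This turns the left-hand side into a $\ZZ$-linear combination of terms $\chi\big(\bl(v,\rS_\GG(v)),\, v_i\big)$; since $\bl(v,\rS_\GG(v))$ is symmetric-side material I can further expand it as $\br(v)\rS_\GG(v) - \br(\rS_\GG(v))v = \br(v)\big(\rS_\GG(v)-v\big)$, using $\br\circ\rS_\GG = \br$ from Lemma~\ref{lemma:serre-p}. Thus $\bl(v,\rS_\GG(v)) = \br(v)(\rS_\GG - 1)v$, and the whole left-hand side becomes $-\br(v)\big[\br(v_1)\chi\big((\rS_\GG-1)v,v_2\big) - \br(v_2)\chi\big((\rS_\GG-1)v,v_1\big)\big]$.

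The remaining step is to identify $\chi\big((\rS_\GG-1)v,\, v_i\big)$ with $-\br(v)\,\chi_-(v,v_i)$-type expressions. Here I would use that $\chi$ is symmetrized by $\rS_\GG$, i.e. $\chi(a,b) = \chi(b,\rS_\GG a)$, which gives $\chi\big((\rS_\GG-1)v, v_i\big) = \chi(\rS_\GG v, v_i) - \chi(v,v_i)$; rewriting $\chi(\rS_\GG v, v_i) = \chi(v_i, \rS_\GG^2 v)$ is awkward, so instead I would apply~\eqref{eq:serre-symmetrize}: $\chi_-(v_i, v) = \chi\big(v_i,(1-\rS_\GG)v\big) = -\chi\big(v_i,(\rS_\GG-1)v\big)$, and then use that $\chi$ is symmetric when one argument is... — this is where the point-like structure must enter. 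The cleanest route is: $\chi\big((\rS_\GG-1)v, v_i\big) = \chi\big(v_i, \rS_\GG(\rS_\GG-1)v\big) = \chi\big(v_i,(\rS_\GG-1)v\big) + \chi\big(v_i,(\rS_\GG-1)^2 v\big)$, and by Corollary~\ref{corollary:serre-unipotent} the operator $(\rS_\GG-1)^2$ has image in $\ZZ\bp$ (it kills $\bp^\perp$ and lands in $\bp^\perp$, hence lands in $\ZZ\bp$ after one more application... actually $(\rS_\GG-1)^2$ maps $\GG\to\bp^\perp$ and $\bp^\perp\to\ZZ\bp$, so $(\rS_\GG-1)^2\GG\subseteq\bp^\perp$ and we need one more care), so that $\chi\big(v_i,(\rS_\GG-1)^2v\big)$ is a multiple of $\chi(v_i,\bp) = \br(v_i)$. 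Tracking these multiples and assembling gives exactly $-\br(v)\chi_-(v_1,v_2)$ after the $\br(v_1),\br(v_2)$ antisymmetrization, because the pure-$\bp$ correction terms cancel in the combination $\br(v_1)(\cdots v_2) - \br(v_2)(\cdots v_1)$.

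The main obstacle I anticipate is this last bookkeeping: correctly handling the $(\rS_\GG-1)^2$ correction terms and confirming they drop out of the antisymmetric combination, and getting the signs to match the convention in~\eqref{eq:chi-minus-k}. A slicker alternative, which I would try first, is to work directly modulo $\bp$ throughout: in $\NS(\GG)\otimes\QQ$ the operator $\rS_\GG - 1$ descends from $\GG$ to a map $\GG/\bp \to \bp^\perp/\bp = \NS(\GG)$ (using Lemma~\ref{lemma:serre-identity}, which says $\rS_\GG-1$ kills $\NS(\GG)$, so it factors through $\GG/\bp^\perp \cong \ZZ \xrightarrow{\br}$), and hence $(\rS_\GG-1)v$ depends mod $\bp$ only on $\br(v)$ — write $(\rS_\GG-1)v \equiv \br(v)\kappa \pmod{\bp}$ for a fixed $\kappa\in\NS(\GG)\otimes\QQ$. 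Then $\bl(v,\rS_\GG v) = \br(v)(\rS_\GG-1)v \equiv \br(v)^2\kappa$, and pairing against $\bar\bl(v_1,v_2)$ and comparing with~\eqref{eq:chi-minus-k} forces $\kappa = K_\GG$, which is precisely the assertion. This avoids the messy $(\rS_\GG-1)^2$ terms entirely, at the cost of one lemma-level verification that $(\rS_\GG-1)v \bmod \bp$ is proportional to $\br(v)$ — which is immediate from Lemmas~\ref{lemma:serre-p} and~\ref{lemma:serre-identity}.
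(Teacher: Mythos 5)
Your preferred (``slicker'') route is correct, and it is organized differently from the paper's proof while using the same ingredients. You first note that $\bl(v,\rS_\GG(v))=\br(v)\,(\rS_\GG-1)v$ (via $\br\circ\rS_\GG=\br$, Lemma~\ref{lemma:serre-p}), that $(\rS_\GG-1)\GG\subseteq\bp^\perp$, and that by Lemma~\ref{lemma:serre-identity} the class of $(\rS_\GG-1)v$ in $\NS(\GG)\otimes\QQ$ factors through $\br$, so $(\rS_\GG-1)v\equiv\br(v)\kappa\pmod\bp$ for a single $\kappa\in\NS(\GG)\otimes\QQ$. This reduces the lemma to the one identification $\kappa=K_\GG$ and disposes of the case $\br(v)=0$ for free (the paper treats it separately). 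The paper instead fixes one $v$ with $r=\br(v)\neq0$ and computes $q(\bl(v,v'),\bl(v,\rS_\GG(v)))=r^2q(K_\GG,\bl(v,v'))$ for arbitrary $v'$; with this special family of test vectors the Serre relation collapses everything immediately, via $\chi(v,\rS_\GG(v))=\chi(v,v)$ and $\chi(v',\rS_\GG(v))=\chi(v,v')$, and the vectors $\bl(v,v')$ already span $\bp^\perp\otimes\QQ$, so nondegeneracy of $q$ finishes. Your approach buys a cleaner conceptual statement (``$(\rS_\GG-1)v$ mod $\bp$ is proportional to $\br(v)$''); the paper's buys a shorter computation.

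Two cautions. First, the step you phrase as ``pairing against $\bar\bl(v_1,v_2)$ and comparing with~\eqref{eq:chi-minus-k} forces $\kappa=K_\GG$'' is where the actual work lives: you must verify $q(\kappa,\bl(v_1,v_2))=-\chi_-(v_1,v_2)$, and for general $v_1,v_2$ this needs the identity $\br(u)\,\chi_-(v_1,v_2)=\br(v_1)\chi_-(u,v_2)-\br(v_2)\chi_-(u,v_1)$ (valid because $\chi_-$ vanishes on the hyperplane $\bp^\perp\times\bp^\perp$), or else you should simply restrict to the paper's test family $\bl(v,v')$ with $v$ fixed. It does go through, but it is not a one-line comparison. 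Second, the detour through $(\rS_\GG-1)^2$ in your first route is unnecessary and is the shakiest part of the write-up: since $(\rS_\GG-1)v\in\bp^\perp$, for any $w\in\bp^\perp$ symmetry of $\chi$ on $\bp^\perp$ (Definition~\ref{def:surface-like}(3)) together with $\chi(w,\rS_\GG(v))=\chi(v,w)$ gives $\chi((\rS_\GG-1)v,w)=\chi(w,(\rS_\GG-1)v)=\chi_-(v,w)$ directly, with no higher-order correction terms to track.
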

\begin{proof}
If $\br(v) = 0$ then $\br(\rS_\GG (v)) = 0$ as well (by Lemma~\ref{lemma:serre-p}), hence both sides are zero. 
So we may assume $\br(v) \ne 0$.
Take arbitrary $v' \in \GG$ and let $r = \br(v)$, $r' = \br(v')$.
Then
\begin{multline*}
q(\bl(v,v'),\bl(v,\rS_\GG (v))) = 
-\chi(\bl(v,v'),\bl(v,\rS_\GG (v))) = 
-\chi(rv' - r'v,r \, \rS_\GG (v) - rv)) = \\ =
r^2(\chi(v',v) - \chi(v',\rS_\GG (v))) + rr'(\chi(v,\rS_\GG (v)) - \chi(v,v)) =
r^2(\chi(v',v) - \chi(v,v')) = 
r^2q(K_\GG,\bl(v,v')).
\end{multline*}
It follows that $\bl(v,\rS_\GG (v)) - r^2K_\GG$ is orthogonal to all elements of the form $\bl(v,v')$.
If $v'' \in \bp^\perp$ then $\bl(v,v' + v'') = \bl(v,v') + rv''$, hence elements of this form span all $\bp^\perp$.
It follows that $\bl(v,\rS_\GG (v)) - r^2K_\GG$ is in the kernel of $q$ on $\bp^\perp$, hence in $\QQ\bp$.
\end{proof}

\begin{corollary}
For any $v \in \GG$ we have 
$\chi(\rS_\GG (v),v) - \chi(v,v) = \br(v)^2 q(K_\GG,K_\GG)$.
\end{corollary}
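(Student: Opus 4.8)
The plan is to rewrite the left-hand side in terms of the skew form $\chi_-$ and then quote Lemma~\ref{lemma:lambda-k}. Set $w := \rS_\GG(v) - v$. By Lemma~\ref{lemma:serre-p} we have $\br(w) = \br(\rS_\GG(v)) - \br(v) = 0$, so $w \in \bp^\perp$. The first — and only slightly delicate — step is to check that $\chi(v,w) = 0$. For this, apply the Serre relation $\chi(a,b) = \chi(b,\rS_\GG(a))$ twice to get $\chi(v_1,v_2) = \chi(v_2,\rS_\GG(v_1)) = \chi(\rS_\GG(v_1),\rS_\GG(v_2))$, so $\rS_\GG$ preserves $\chi$; in particular, taking $v_1 = v$, $v_2 = \rS_\GG(v)$ gives $\chi(v,\rS_\GG(v)) = \chi(\rS_\GG(v),\rS_\GG(v)) = \chi(v,v)$, hence $\chi(v,w) = \chi(v,\rS_\GG(v)) - \chi(v,v) = 0$.

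Granting this, I would then compute
\[
\chi(\rS_\GG(v),v) - \chi(v,v) = \chi(w,v) = \chi(w,v) - \chi(v,w) = \chi_-(w,v),
\]
and apply Lemma~\ref{lemma:canonical-class}. Since $\br(w)=0$, the map $\bl$ of~\eqref{eq:def-lambda} gives $\bl(w,v) = \br(w)v - \br(v)w = -\br(v)w$, so $\chi_-(w,v) = -q(K_\GG,\bl(w,v)) = \br(v)\,q(K_\GG,\overline{w})$, where $\overline{w} \in \NS(\GG)$ is the class of $w$. On the other hand $\bl(v,\rS_\GG(v)) = \br(v)\rS_\GG(v) - \br(\rS_\GG(v))v = \br(v)w$, so Lemma~\ref{lemma:lambda-k} says precisely that $\br(v)\,\overline{w} = \br(v)^2 K_\GG$ in $\NS(\GG)\otimes\QQ$. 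Substituting this in and using bilinearity and symmetry of $q$ yields $\chi_-(w,v) = q(K_\GG,\br(v)^2 K_\GG) = \br(v)^2 q(K_\GG,K_\GG)$, which is the asserted identity.

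The main (and essentially the only) obstacle is the vanishing $\chi(v,w)=0$, i.e.\ the fact that the Serre operator is a $\chi$-isometry; once that is in place the rest is bookkeeping with $\bl$ and $q$. The presence of rational coefficients when $K_\GG$ is not integral causes no difficulty, since every intermediate identity lives in $\NS(\GG)\otimes\QQ$ or in $\QQ$, and integrality of the left-hand side is then automatic. As an alternative to the isometry remark, one could instead rewrite $\chi(v,w) = \chi(w,\rS_\GG(v))$ via the Serre relation and re-run the pairing computation of Lemma~\ref{lemma:lambda-k}, but the route above is shorter.
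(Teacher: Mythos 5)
Your proof is correct and follows essentially the same route as the paper: the paper's proof is the one-line substitution of $v_1=\rS_\GG(v)$, $v_2=v$ into~\eqref{eq:chi-minus-k}, together with $\chi(v,\rS_\GG(v))=\chi(v,v)$, Lemma~\ref{lemma:lambda-k}, and $q(K_\GG,\bp)=0$ — exactly the ingredients you use, only repackaged via $w=\rS_\GG(v)-v$ (note $\bl(w,v)=\bl(\rS_\GG(v),v)$ since $\bl(v,v)=0$, so the two computations coincide). The only cosmetic difference is that you derive $\chi(v,\rS_\GG(v))=\chi(v,v)$ from the isometry property of $\rS_\GG$, whereas it follows in one step from the Serre relation with $v_1=v_2=v$.
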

\begin{proof}
Substitute $v_1 = \rS_\GG v$, $v_2 = v$ into~\eqref{eq:chi-minus-k}, take into account $\chi(v,\rS_\GG (v)) = \chi(v,v)$, use Lemma~\ref{lemma:lambda-k} and the fact that the point-like element $\bp$ is in the kernel of $q$.
\end{proof}

The (rational) number $q(K_\GG,K_\GG)$ should be thought of as the canonical degree of a surface-like pseudolattice.

\subsection{Exceptional sequences in surface-like pseudolattices}

The computations of this section are analogues of those in \cite[Section~3]{Pe}.
However, the categorical approach allows to simplify them considerably.
Assume $\GG$ is a surface-like pseudolattice, $\bp$ is its point-like element, $\br$ is the corresponding rank function, $q$ is the induced quadratic form on $\NS(\GG)$, 
and $\bl$ is the map defined in~\eqref{eq:def-lambda}.

\begin{lemma}[cf.~\protect{\cite[3.5(ii)]{Pe}}]\label{lemma:ne-pair}
Assume $\ce_1, \ce_2 \in \GG$ are exceptional with nonzero ranks.
Then
\begin{equation}
\chi(\ce_1,\ce_2) + \chi(\ce_2,\ce_1) = \frac1{\br(\ce_1)\br(\ce_2)}(q(\bl(\ce_1,\ce_2)) + \br(\ce_1)^2 + \br(\ce_2)^2).
\end{equation} 
\end{lemma}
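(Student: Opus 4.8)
The plan is to reduce everything to the decomposition of an arbitrary exceptional element $\ce$ of nonzero rank as a combination of a "line-bundle-like" piece and a multiple of the point-like element $\bp$, and then to expand the two Euler pairings $\chi(\ce_1,\ce_2)$, $\chi(\ce_2,\ce_1)$ using the structural properties of a surface-like pseudolattice. Concretely, set $r_i = \br(\ce_i)$ and write $\ce_i = \frac{1}{r_i}(r_i\ce_i)$; the point is that the vectors $\bl(\ce_1,\ce_2) = r_1\ce_2 - r_2\ce_1$ and $\bp$ live in $\bp^\perp$, where $\chi$ is symmetric and descends to $-q$ on $\NS(\GG)$ (Lemma~\ref{lemma:chi-ker-r}). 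So I would first compute $\chi$ on the pair $(r_1\ce_2 - r_2\ce_1, r_1\ce_2 - r_2\ce_1)$, i.e. on $\bl(\ce_1,\ce_2)$ with itself.

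The key algebraic step: expand
\[
\chi(r_1\ce_2 - r_2\ce_1,\, r_1\ce_2 - r_2\ce_1) = r_1^2\chi(\ce_2,\ce_2) + r_2^2\chi(\ce_1,\ce_1) - r_1 r_2\bigl(\chi(\ce_1,\ce_2) + \chi(\ce_2,\ce_1)\bigr).
\]
Since $\ce_1,\ce_2$ are exceptional, $\chi(\ce_i,\ce_i)=1$, so the right-hand side is $r_1^2 + r_2^2 - r_1 r_2(\chi(\ce_1,\ce_2)+\chi(\ce_2,\ce_1))$. On the other hand $\bl(\ce_1,\ce_2)\in\bp^\perp$, so by the sign convention in Lemma~\ref{lemma:chi-ker-r} the left-hand side equals $-q(\bl(\ce_1,\ce_2),\bl(\ce_1,\ce_2)) = -q(\bl(\ce_1,\ce_2))$ (viewing $\bl(\ce_1,\ce_2)$ modulo $\bp$ in $\NS(\GG)$; this is legitimate because $\bp$ lies in the kernel of $q$ on $\bp^\perp$). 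Equating the two expressions and solving for $\chi(\ce_1,\ce_2)+\chi(\ce_2,\ce_1)$ gives exactly the claimed formula after dividing by $r_1 r_2 = \br(\ce_1)\br(\ce_2)$.

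The one genuine thing to check — and the only place any thought is needed — is that $\bl(\ce_1,\ce_2)$ really is in $\bp^\perp = {}^\perp\bp$, and that reducing it modulo $\bp$ does not affect the value of $q$; both follow since $\br(\bl(\ce_1,\ce_2)) = r_1\br(\ce_2) - r_2\br(\ce_1) = 0$ and $\bp\in\Ker(q\vert_{\bp^\perp})$ by construction of $q$ in Lemma~\ref{lemma:chi-ker-r}. There is no real obstacle here: the whole argument is a bilinear expansion plus the defining property of $q$; the "hard part," such as it is, is merely bookkeeping the sign convention correctly so that the $+\br(\ce_1)^2 + \br(\ce_2)^2$ appears with the right sign relative to $q(\bl(\ce_1,\ce_2))$.
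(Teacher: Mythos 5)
Your proposal is correct and is essentially identical to the paper's proof: both expand $\chi(\bl(\ce_1,\ce_2),\bl(\ce_1,\ce_2))$ by bilinearity, use exceptionality to replace $\chi(\ce_i,\ce_i)$ by $1$, and identify the left-hand side with $-q(\bl(\ce_1,\ce_2))$ via Lemma~\ref{lemma:chi-ker-r}. The only difference is that you make explicit the check that $\bl(\ce_1,\ce_2)\in\bp^\perp$ and that reducing modulo $\bp$ does not change $q$, which the paper leaves implicit.
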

\begin{proof}
To abbreviate the notation write $r_i = \br(\ce_i)$. 
Then $\bl(\ce_1,\ce_2) = r_1\ce_2 - r_2\ce_1$, hence
\begin{multline*}
-q(\bl(\ce_1,\ce_2)) = 
\chi(\bl(\ce_1,\ce_2),\bl(\ce_1,\ce_2)) =
\chi(r_1\ce_2 - r_2\ce_1,r_1\ce_2 - r_2\ce_1) = \\ =
r_1^2\chi(\ce_2,\ce_2) + r_2^2\chi(\ce_1,\ce_1) - r_1r_2(\chi(\ce_1,\ce_2) + \chi(\ce_2,\ce_1))
\end{multline*}
(the first is by Lemma~\ref{lemma:chi-ker-r}, the second is by definition of $\bl$, and the third is by bilinearity of $\chi$).
Using exceptionality $\chi(\ce_1,\ce_1) = \chi(\ce_2,\ce_2) = 1$, we easily deduce the required formula.
\end{proof}

\begin{lemma}[cf.~\protect{\cite[Proposition~3.8]{Pe}}]\label{lemma:ne-triple}
Assume $(\ce_1,\ce_2)$ and $(\ce_2,\ce_3)$ are exceptional pairs in $\GG$ and $\br(\ce_2) \ne 0$.
Then $(\ce_1,\ce_3)$ is an exceptional pair if and only if $q(\bl(\ce_1,\ce_2),\bl(\ce_2,\ce_3)) = \br(\ce_1)\br(\ce_3)$.
\end{lemma}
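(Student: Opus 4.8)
The plan is to express the condition that $(\ce_1,\ce_3)$ be an exceptional pair, namely $\chi(\ce_3,\ce_1)=0$, in terms of the symmetrized form $\chi_+$ and the skew form $\chi_-$, and then to evaluate each of these using the machinery already set up. Write $r_i=\br(\ce_i)$, so $r_2\neq 0$ by hypothesis. Since $2\chi(\ce_3,\ce_1)=\chi_+(\ce_1,\ce_3)+\chi_-(\ce_3,\ce_1)$ and $\chi(\ce_1,\ce_3)$ is unconstrained, the vanishing $\chi(\ce_3,\ce_1)=0$ is equivalent to $\chi_+(\ce_1,\ce_3)=-\chi_-(\ce_3,\ce_1)=\chi_-(\ce_1,\ce_3)$. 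So the first step is to compute both sides in our coordinates.

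For the symmetric side I would apply Lemma~\ref{lemma:ne-pair} to the (exceptional, nonzero rank) pair $(\ce_1,\ce_3)$: this gives $\chi_+(\ce_1,\ce_3)=\frac{1}{r_1r_3}\bigl(q(\bl(\ce_1,\ce_3))+r_1^2+r_3^2\bigr)$. The key identity to exploit is the cocycle-type relation for $\bl$ coming from its definition in~\eqref{eq:def-lambda}: since $\bl(\ce_1,\ce_3)=r_1\ce_3-r_3\ce_1$ and likewise for the other pairs, one checks directly that
\begin{equation*}
r_2\,\bl(\ce_1,\ce_3)=r_3\,\bl(\ce_1,\ce_2)+r_1\,\bl(\ce_2,\ce_3)\pmod{\bp}.
\end{equation*}
Expanding $q\bigl(\bl(\ce_1,\ce_3)\bigr)$ via this (using that $q$ is well-defined modulo $\bp$ by Lemma~\ref{lemma:chi-ker-r}) produces $r_3^2q(\bl(\ce_1,\ce_2))+r_1^2q(\bl(\ce_2,\ce_3))+2r_1r_3\,q(\bl(\ce_1,\ce_2),\bl(\ce_2,\ce_3))$, all divided by $r_2^2$. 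Then I would substitute the values of $q(\bl(\ce_i,\ce_{i+1}))$ obtained by rearranging Lemma~\ref{lemma:ne-pair} applied to the given exceptional pairs $(\ce_1,\ce_2)$ and $(\ce_2,\ce_3)$, namely $q(\bl(\ce_i,\ce_{i+1}))=r_ir_{i+1}\chi_+(\ce_i,\ce_{i+1})-r_i^2-r_{i+1}^2$, but note that since $(\ce_i,\ce_{i+1})$ is exceptional we have $\chi(\ce_{i+1},\ce_i)=0$, so $\chi_+(\ce_i,\ce_{i+1})=\chi(\ce_i,\ce_{i+1})$ — however I do not actually need the numerical value, only that these three pairings are compatible, so the cleaner route is to keep everything symbolic.

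For the skew side, apply Lemma~\ref{lemma:canonical-class}: $\chi_-(\ce_1,\ce_3)=-q(K_\GG,\bl(\ce_1,\ce_3))$, and again use the cocycle relation above together with $\chi_-(\ce_i,\ce_{i+1})=-q(K_\GG,\bl(\ce_i,\ce_{i+1}))$; but since $(\ce_i,\ce_{i+1})$ is exceptional, $\chi_-(\ce_i,\ce_{i+1})=\chi(\ce_i,\ce_{i+1})-\chi(\ce_{i+1},\ce_i)=\chi(\ce_i,\ce_{i+1})=\chi_+(\ce_i,\ce_{i+1})$ as well. The upshot is that when one assembles $\chi_+(\ce_1,\ce_3)-\chi_-(\ce_1,\ce_3)=2\chi(\ce_3,\ce_1)$, the terms involving $q(\bl(\ce_i,\ce_{i+1}))$ and $q(K_\GG,\bl(\ce_i,\ce_{i+1}))$ cancel against the $\chi(\ce_i,\ce_{i+1})$ contributions (this is the reflection of the fact that $\chi(\ce_{i+1},\ce_i)=0$ already), leaving exactly
\begin{equation*}
2\chi(\ce_3,\ce_1)=\frac{2}{r_1r_3}\Bigl(\frac{r_1r_3}{r_2^2}\,q(\bl(\ce_1,\ce_2),\bl(\ce_2,\ce_3))-\frac{r_1r_3}{r_2^2}\cdot r_2^2\Bigr)\Big/1,
\end{equation*}
which I expect to simplify to $\chi(\ce_3,\ce_1)=\dfrac{q(\bl(\ce_1,\ce_2),\bl(\ce_2,\ce_3))-r_1r_3}{r_2^2}$. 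Since $r_2\neq 0$, this vanishes if and only if $q(\bl(\ce_1,\ce_2),\bl(\ce_2,\ce_3))=r_1r_3$, which is the claim.

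\textbf{Main obstacle.} The routine but genuinely delicate point is the bookkeeping in the cancellation: one must carefully separate the symmetric and skew parts and keep track of which terms are constrained to vanish by the exceptionality of $(\ce_1,\ce_2)$ and $(\ce_2,\ce_3)$ versus which survive, and verify that the $K_\GG$-dependent terms drop out entirely so that the final criterion is purely in terms of $q$ and ranks. An equivalent, perhaps cleaner, way to organize this — and the way I would actually write it up — is to avoid $\chi_\pm$ altogether: directly expand $\chi(\ce_3,\ce_1)$ using $r_2\ce_3\equiv \bl(\ce_2,\ce_3)+r_3\ce_2$ and $r_2\ce_1\equiv -\bl(\ce_1,\ce_2)+r_1\ce_2$ modulo $\bp$ (valid since $\br(\ce_3\cdot r_2 - \bl(\ce_2,\ce_3)-r_3\ce_2)=0$, etc., and $\chi(\bp,-)=\br(-)$ kills the ambiguity once combined appropriately), getting
\begin{equation*}
r_2^2\,\chi(\ce_3,\ce_1)=\chi\bigl(\bl(\ce_2,\ce_3)+r_3\ce_2,\ -\bl(\ce_1,\ce_2)+r_1\ce_2\bigr),
\end{equation*}
and then expanding the right-hand side into four terms: $-\chi(\bl(\ce_2,\ce_3),\bl(\ce_1,\ce_2))$ which equals $q(\bl(\ce_1,\ce_2),\bl(\ce_2,\ce_3))$ by Lemma~\ref{lemma:chi-ker-r} (both arguments lie in $\bp^\perp$); the cross term $r_1\chi(\bl(\ce_2,\ce_3),\ce_2)-r_3\chi(\bl(\ce_1,\ce_2),\ce_2)$ which one evaluates using $\bl(\ce_2,\ce_3)=r_2\ce_3-r_3\ce_2$ and $\chi(\ce_2,\ce_3)=0$ respectively $\chi(\ce_3,\ce_2)$ irrelevant, together with exceptionality of $\ce_2$; and $r_1r_3\chi(\ce_2,\ce_2)=r_1r_3$. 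Collecting these yields $r_2^2\chi(\ce_3,\ce_1)=q(\bl(\ce_1,\ce_2),\bl(\ce_2,\ce_3))-r_1r_3$, and dividing by $r_2^2\neq 0$ finishes the proof.
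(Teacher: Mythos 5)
Your second, preferred argument --- expanding $r_2^2\chi(\ce_3,\ce_1)=\chi(r_2\ce_3,r_2\ce_1)$ bilinearly in terms of $\bl(\ce_2,\ce_3)$, $\bl(\ce_1,\ce_2)$ and $\ce_2$, identifying $-\chi$ with $q$ on $\bp^\perp$ via Lemma~\ref{lemma:chi-ker-r}, and using semiorthogonality of the two given pairs plus $\chi(\ce_2,\ce_2)=1$ --- is exactly the paper's proof, which computes $-\chi(\bl(\ce_2,\ce_3),\bl(\ce_1,\ce_2))$ and reads off the same identity $r_2^2\chi(\ce_3,\ce_1)=q(\bl(\ce_1,\ce_2),\bl(\ce_2,\ce_3))-r_1r_3$ from the other end. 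Two small cautions: the second cross term should be $-r_3\chi(\ce_2,\bl(\ce_1,\ce_2))$, not $-r_3\chi(\bl(\ce_1,\ce_2),\ce_2)$ (the order matters since $\chi$ is not symmetric and $\ce_2\notin\bp^\perp$; the transposed version drags in the unconstrained $\chi(\ce_1,\ce_2)$), and your first route via $\chi_\pm$ and Lemma~\ref{lemma:ne-pair} would additionally require $r_1,r_3\ne 0$, which is not assumed --- so it is right that you discard it in favor of the direct expansion.
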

\begin{proof}
Set $r_i = \br(\ce_i)$. 
As in the previous lemma, we have
\begin{multline*}
q(\bl(\ce_1,\ce_2),\bl(\ce_2,\ce_3)) =
q(\bl(\ce_2,\ce_3),\bl(\ce_1,\ce_2)) =
-\chi(\bl(\ce_2,\ce_3),\bl(\ce_1,\ce_2)) = \\ =
-\chi(r_2\ce_3 - r_3\ce_2, r_1\ce_2 - r_2\ce_1) =
-r_1r_2\chi(\ce_3,\ce_2) - r_3r_2\chi(\ce_2,\ce_1) + r_3r_1\chi(\ce_2,\ce_2) + r_2^2\chi(\ce_3,\ce_1).
\end{multline*}
By the assumptions the first two terms in the right hand side vanish, and the third term equals $r_1r_3$. 
Hence $r_2^2\chi(\ce_3,\ce_1) = q(\bl(\ce_1,\ce_2),\bl(\ce_2,\ce_3)) - r_1r_3$.
In particular, since $r_2$ is nonzero, $\chi(\ce_3,\ce_1)$ vanishes if and only if $q(\bl(\ce_1,\ce_2),\bl(\ce_2,\ce_3)) = r_1r_3$.
\end{proof}

\begin{lemma}\label{lemma:ne-quadruple}
If $(\ce_1,\ce_2,\ce_3,\ce_4)$ is an exceptional sequence in $\GG$ then $q(\bl(\ce_1,\ce_2),\bl(\ce_3,\ce_4)) = 0$.
\end{lemma}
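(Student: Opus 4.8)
The plan is to expand everything in terms of the Euler form and then kill every term with semiorthogonality. Write $r_i = \br(\ce_i)$, so that $\bl(\ce_1,\ce_2) = r_1\ce_2 - r_2\ce_1$ and $\bl(\ce_3,\ce_4) = r_3\ce_4 - r_4\ce_3$ by~\eqref{eq:def-lambda}. Both of these elements lie in $\bp^\perp = \Ker\br$, so by Lemma~\ref{lemma:chi-ker-r} the value $q(\bl(\ce_1,\ce_2),\bl(\ce_3,\ce_4))$ (meaning, as usual, $q$ evaluated on the images of these vectors in $\NS(\GG)$) equals $-\chi$ evaluated on the two vectors. The key point is that $q$ is symmetric, so I am free to choose the order of its arguments, and I choose the order that makes all the resulting Euler pairings ``lower-triangular'': namely
\begin{equation*}
q(\bl(\ce_1,\ce_2),\bl(\ce_3,\ce_4)) = q(\bl(\ce_3,\ce_4),\bl(\ce_1,\ce_2)) = -\chi\bigl(r_3\ce_4 - r_4\ce_3,\ r_1\ce_2 - r_2\ce_1\bigr).
\end{equation*}

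Next I would expand the right-hand side by bilinearity, obtaining the combination
\begin{equation*}
-\,r_1r_3\,\chi(\ce_4,\ce_2) + r_2r_3\,\chi(\ce_4,\ce_1) + r_1r_4\,\chi(\ce_3,\ce_2) - r_2r_4\,\chi(\ce_3,\ce_1).
\end{equation*}
In each of the four pairings $\chi(\ce_4,\ce_2)$, $\chi(\ce_4,\ce_1)$, $\chi(\ce_3,\ce_2)$, $\chi(\ce_3,\ce_1)$ the first index is strictly larger than the second, so each vanishes by the semiorthogonality clause of Definition~\ref{def:exceptional}. Hence the whole expression is $0$, which is exactly the claim.

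\textbf{Main obstacle.} There is essentially none: the only thing one must get right is to run the expansion with $\bl(\ce_3,\ce_4)$ occupying the \emph{first} slot of $\chi$ (legitimate because $q$ is symmetric on $\NS(\GG)$), so that the larger index always sits on the left. Expanding in the other order would produce pairings such as $\chi(\ce_2,\ce_3)$, which are not controlled by semiorthogonality, and the argument would stall. Note that, in contrast to Lemmas~\ref{lemma:ne-pair} and~\ref{lemma:ne-triple}, the exceptionality relations $\chi(\ce_i,\ce_i)=1$ are not used at all here — only the off-diagonal semiorthogonality of the sequence and the symmetry of $q$ coming from Definition~\ref{def:surface-like}(3).
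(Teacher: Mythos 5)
Your proof is correct and is essentially identical to the paper's: both use the symmetry of $q$ to place $\bl(\ce_3,\ce_4)$ in the first slot of $\chi$, expand by bilinearity, and kill all four terms $\chi(\ce_j,\ce_i)$ with $j>i$ by semiorthogonality. Your remark about why the order of arguments matters is exactly the (implicit) point of the paper's computation.
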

\begin{proof}
Set $r_i = \br(\ce_i)$. 
Then as before
\begin{multline*}
q(\bl(\ce_1,\ce_2),\bl(\ce_3,\ce_4)) =
q(\bl(\ce_3,\ce_4),\bl(\ce_1,\ce_2)) =
-\chi(\bl(\ce_3,\ce_4),\bl(\ce_1,\ce_2)) = \\ =
-\chi(r_3\ce_4 - r_4\ce_3, r_1\ce_2 - r_2\ce_1) =
-r_1r_3\chi(\ce_4,\ce_2) - r_2r_4\chi(\ce_3,\ce_1) + r_2r_3\chi(\ce_4,\ce_1) + r_1r_4\chi(\ce_3,\ce_2).
\end{multline*}
By the assumption, all the terms in the right hand side vanish.
\end{proof}

\begin{lemma}[cf.~\protect{\cite[3.3(i) and~3.4(iv)]{Pe}}]\label{lemma:ne-zero-rank}
If $\br(\ce) = 0$ then $\ce$ is exceptional in $\GG$ if and only if $q(\ce) = -1$.
If $\br(\ce_1) = \br(\ce_2) = 0$ then $\chi(\ce_1,\ce_2) = 0$ if and only if $q(\ce_1,\ce_2) = 0$.
\end{lemma}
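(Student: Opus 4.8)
The plan is to reduce both assertions directly to Lemma~\ref{lemma:chi-ker-r}. First I would note that, by the definition of the rank function, the hypothesis $\br(\ce) = 0$ is literally the statement $\ce \in \Ker\br = \bp^\perp$, and likewise $\br(\ce_1) = \br(\ce_2) = 0$ says exactly that $\ce_1, \ce_2 \in \bp^\perp$. So all the vectors occurring in the statement lie in the subgroup ${}^\perp\bp = \bp^\perp$ on which $\chi$ is symmetric and, by Lemma~\ref{lemma:chi-ker-r}, descends to $-q$ along the projection $\bp^\perp \twoheadrightarrow \NS(\GG)$. Here I read $q(\ce)$ and $q(\ce_1,\ce_2)$ in the statement as shorthand for $q$ evaluated on the images of these elements in $\NS(\GG)$, which makes sense precisely because $\ce, \ce_1, \ce_2 \in \bp^\perp$.

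Next I would simply apply the identity $\chi(v_1, v_2) = -q(v_1, v_2)$ for $v_1, v_2 \in \bp^\perp$ (with the slight abuse of notation above) in two cases. Taking $v_1 = v_2 = \ce$ gives $\chi(\ce,\ce) = -q(\ce)$, so that exceptionality of $\ce$, i.e.\ $\chi(\ce,\ce) = 1$, is equivalent to $q(\ce) = -1$. Taking $v_1 = \ce_1$, $v_2 = \ce_2$ gives $\chi(\ce_1,\ce_2) = -q(\ce_1,\ce_2)$, hence $\chi(\ce_1,\ce_2) = 0$ if and only if $q(\ce_1,\ce_2) = 0$. This proves both parts.

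There is no genuine obstacle here: the lemma is an immediate translation of definitions once the vanishing of the rank is recognized as membership in $\bp^\perp$. The only point meriting a word of care is the sign in Lemma~\ref{lemma:chi-ker-r} — the symmetric form induced by $\chi$ on $\NS(\GG)$ is $-q$, not $q$ — and this is exactly what forces the value $-1$ (rather than $+1$) in the first assertion, in agreement with the geometric picture of Example~\ref{example:standard}, where a zero-rank exceptional class is the class of a $(-1)$-curve.
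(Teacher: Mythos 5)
Your proof is correct and is exactly the paper's argument: the paper's proof reads ``This follows immediately from Lemma~\ref{lemma:chi-ker-r},'' and your write-up simply makes explicit the observation that rank zero means membership in $\bp^\perp$ together with the sign convention $\chi = -q$ there.
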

\begin{proof}
This follows immediately from Lemma~\ref{lemma:chi-ker-r}.
\end{proof}

\section{Minimal surface-like pseudolattices}\label{section:minimal}

Let $\GG$ be a surface-like pseudolattice.
We use the theory developed in the previous section keeping the point-like element $\bp$ implicit.
Also, to simplify notation we write $D_1 \cdot D_2$ instead of $q(D_1,D_2)$ for any $D_1,D_2 \in \NS(\GG)$ and call this pairing the intersection form.
We always denote the rank of $\GG$ by $n$.

\subsection{Minimality, geometricity, and norm-minimality}

We start by introducing some useful notions.

\begin{definition}\label{def:minimlaity}
A surface-like pseudolattice $\GG$ is {\sf minimal} if it has no exceptional elements of zero rank.
\end{definition}

There is a simple characterization of minimality in terms of Neron--Severi lattices.

\begin{lemma}\label{lemma:minimality-criterion}
A surface-like pseudolattice $\GG$ is minimal if and only if the intersection form on its Neron--Severi lattice $\NS(\GG)$ does not represent $-1$.
\end{lemma}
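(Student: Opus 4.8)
The plan is to reduce the statement directly to Lemma~\ref{lemma:ne-zero-rank}, which already does the hard work: it characterizes zero-rank exceptional elements by saying that an element $\ce\in\GG$ with $\br(\ce)=0$ is exceptional if and only if $q(\ce)=-1$, where $q(\ce)$ denotes the value of the intersection form on the image of $\ce$ in $\NS(\GG)=\bp^\perp/\bp$. So the content of the lemma is merely a translation between the statement ``$\GG$ has a zero-rank exceptional element'' and the statement ``$q$ represents $-1$ on $\NS(\GG)$'', using that a zero-rank element of $\GG$ is exactly an element of $\Ker\br=\bp^\perp$.

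For the forward implication I would argue by contrapositive: suppose $\GG$ is not minimal, so there is an exceptional element $\ce$ with $\br(\ce)=0$. Then $\ce\in\bp^\perp$, and by Lemma~\ref{lemma:ne-zero-rank} its class $\bar\ce\in\NS(\GG)$ satisfies $q(\bar\ce,\bar\ce)=-1$; hence the intersection form represents $-1$.

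For the converse, suppose there is $D\in\NS(\GG)$ with $q(D,D)=-1$. Since $\NS(\GG)=\bp^\perp/\bp$ by definition, the quotient map $\bp^\perp\to\NS(\GG)$ is surjective, so we may pick $v\in\bp^\perp$ mapping to $D$. Then $\br(v)=0$, and the class of $v$ in $\NS(\GG)$ is $D$ with $q(D,D)=-1$, so Lemma~\ref{lemma:ne-zero-rank} gives that $v$ is exceptional. Hence $\GG$ is not minimal, completing the equivalence.

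There is no real obstacle here: the statement is essentially immediate from the machinery already in place. The only points requiring care are bookkeeping ones: matching the sign conventions (Lemma~\ref{lemma:chi-ker-r} builds in $\chi|_{\bp^\perp}=-q$, which is exactly why exceptionality $\chi(\ce,\ce)=1$ corresponds to $q=-1$ rather than $q=+1$), and observing that the projection onto $\NS(\GG)$ is surjective so that every $(-1)$-class genuinely lifts to a zero-rank element of $\GG$ — any lift works, since two lifts differ by a multiple of $\bp$, which lies in the radical of $q$ on $\bp^\perp$.
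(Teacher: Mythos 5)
Your proof is correct and follows exactly the paper's route: the paper's own proof is the one-line remark that the statement ``follows immediately from Lemma~\ref{lemma:ne-zero-rank}'', and your write-up simply supplies the (correct) bookkeeping — surjectivity of $\bp^\perp\to\NS(\GG)$ and the sign convention $\chi\vert_{\bp^\perp}=-q$ — that makes the reduction explicit.
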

\begin{proof}
Follows immediately from Lemma~\ref{lemma:ne-zero-rank}.
\end{proof}

\begin{definition}\label{def:geometricity}
We say that a lattice $\rL$ with a vector $K \in \rL$ is {\sf geometric} if $\rL$ has signature $(1,\rk \rL -1)$ and $K$ is {\sf characteristic}, i.e., $D^2 \equiv K\cdot D \pmod 2$ for any $D \in \rL$.
A surface-like pseudolattice $\GG$ is {\sf geometric} if its canonical class is integral and $(\NS(\GG),K_\GG)$ is a geometric lattice.
\end{definition}

\begin{remark}\label{remark:matrix}
In terms of matrix presentation~\eqref{eq:chi-matrix}, a unimodular geometric pseudolattice can be represented by a matrix with $b'_1 = \dots = b'_{n-2} = 0$, $d = 1$, 
the symmetric matrix $(c_{ij})$ representing (up to a sign) the unimodular intersection pairing of its Neron--Severi lattice, and $(b_1,\dots,b_{n-2})$ being the intersection products of the basis vectors with the anticanonical class.
Indeed, $d = \pm 1$ by the proof of Lemma~\ref{lemma:ns}, and changing the sign of $v_0$ if necessary we can ensure that $d = 1$.
Similarly, using unimodularity of $(c_{ij})$ and adding to $v_0$ an appropriate linear combination of $v_1,\dots,v_{n-2}$ we can make $b'_i = 0$.
Then $b_i = -K_\GG \cdot v_i$ by an observation at the end of Subsection~\ref{subsection:canonical-class}.
\end{remark}

The name geometric is motivated by the next result.

\begin{lemma}
A standard surface-like pseudolattice $(\GG,\bp) = (\NGr(\bD(X)),\bp_X)$ is geometric.
\end{lemma}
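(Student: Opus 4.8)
The plan is to unwind the definitions of geometricity (Definition~\ref{def:geometricity}) and reduce everything to classical facts about the intersection form on a smooth projective surface. Recall that for a standard surface-like pseudolattice we have $\NS(\GG) = \NS(X)$ and, by the sign convention following Lemma~\ref{lemma:chi-ker-r} together with \eqref{eq:chi-surface}, the induced form $q$ on $\NS(\GG)$ is exactly the intersection pairing on $\NS(X)$. By the computation in Example~\ref{example:standard} the canonical class $K_\GG$ equals $K_X$ (the observation at the end of Subsection~\ref{subsection:canonical-class} identifies $K_\GG \cdot v_i = (b'_i - b_i)/d$ with the intersection numbers of $K_X$), so in particular $K_\GG$ is integral. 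Thus it remains only to check two things about the lattice $(\NS(X), K_X)$: that its signature is $(1, \rk \NS(X) - 1)$, and that $K_X$ is a characteristic vector.

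First I would invoke the Hodge index theorem: for a smooth projective surface $X$ the intersection form on $\NS(X)$ (with torsion quotiented out) has exactly one positive eigenvalue, i.e.\ signature $(1, \rk\NS(X) - 1)$. This is standard and can be cited. Second, I would verify that $K_X$ is characteristic, i.e.\ $D^2 \equiv K_X \cdot D \pmod 2$ for every $D \in \NS(X)$. This is an immediate consequence of the Riemann--Roch/adjunction identity: for any divisor class $D$ one has $\chi(\cO_X(D)) = \chi(\cO_X) + \tfrac12 D\cdot(D - K_X)$, and since $\chi(\cO_X(D))$ and $\chi(\cO_X)$ are integers, $D\cdot(D-K_X) = D^2 - K_X\cdot D$ is even. (Equivalently, this follows from the adjunction formula applied to curves together with the fact that $\NS(X)$ is generated by curve classes, but the Riemann--Roch argument is cleanest.)

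Putting these together: the lattice $(\NS(\GG), K_\GG) = (\NS(X), K_X)$ has signature $(1, \rk\NS(X)-1)$ and characteristic vector $K_X$, hence is a geometric lattice in the sense of Definition~\ref{def:geometricity}; and $K_\GG = K_X$ is integral. Therefore $(\GG,\bp) = (\NGr(\bD(X)),\bp_X)$ is a geometric surface-like pseudolattice. I do not anticipate a genuine obstacle here — the only mild subtlety is bookkeeping the sign conventions so that $q$ really is the intersection form and $K_\GG$ really is $K_X$ rather than $-K_X$, but since $-K_X$ is characteristic if and only if $K_X$ is, and signature is sign-insensitive, even a sign slip would not affect the conclusion. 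The essential mathematical inputs are exactly the Hodge index theorem and Riemann--Roch, both already implicitly used in Example~\ref{example:standard}.
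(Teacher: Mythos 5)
Your proof is correct and follows exactly the same route as the paper: Hodge index theorem for the signature of $\NS(X)$ and the Riemann--Roch identity $D^2 - K_X\cdot D = 2(\chi(\cO_X(D)) - \chi(\cO_X))$ for the characteristic property, concluding via $\NS(\GG)=\NS(X)$. The extra care you take with the identification $K_\GG = K_X$ and the sign conventions is harmless bookkeeping that the paper leaves implicit.
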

\begin{proof}
The canonical class in the Neron--Severi lattice $\NS(X)$ of a surface $X$ is characteristic since by Riemann--Roch we have 
$D^2 -  K_X \cdot D = 2 (\chi(\cO_X(D)) - \chi(\cO_X))$
and the signature of $\NS(X)$ is equal to $(1,\rk \NS(X) - 1)$ by Hodge index Theorem.
We conclude by $\NS(\GG) = \NS(X)$.
\end{proof}

In this paper we are mostly interested in pseudolattices with an exceptional basis.

Let $\GG$ be a surface-like pseudolattice, and $\ce_\bullet = (\ce_1,\dots,\ce_n)$ an exceptional basis in $\GG$.
We define its norm as
\begin{equation}\label{eq:norm}
||\ce_\bullet|| = \sum_{i=1}^n \br(\ce_i)^2.
\end{equation}
We say that a basis is {\sf norm-minimal}, if the norm of any exceptional basis obtained from $\ce_\bullet$ by a sequence of mutations is greater or equal than the norm of $\ce_\bullet$.
Clearly, any exceptional basis can be transformed by mutations to a norm-minimal basis.

The main result of this section is the following

\begin{theorem}\label{theorem:minimal-cats}
Assume $\GG$ is a geometric surface-like pseudolattice of rank $n$ and $\ce_\bullet$ is a norm-minimal exceptional basis in $\GG$.
If\/ $\br(\ce_i) \ne 0$ for all $1 \le i \le n$ \textup{(}for instance if $\GG$ is minimal\textup{)}, then $\GG$ is isometric either to~$\NGr(\bD(\PP^2))$ or to~$\NGr(\bD(\PP^1 \times \PP^1))$, 
and the basis $\ce_\bullet$ corresponds to one of the standard exceptional collections of line bundles in these categories \textup{(}see Example~\textup{\ref{example:p2-p1p1}}\textup{)}.
In particular, $n = 3$ or~$n = 4$, $K_\GG^2 = 12 - n$, and $\br(\ce_i) = \pm 1$ for all $i$.
\end{theorem}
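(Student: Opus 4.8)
The strategy is to extract from a norm-minimal exceptional basis $\ce_\bullet = (\ce_1,\dots,\ce_n)$ a system of divisor classes in $\NS(\GG)$ whose intersection numbers are tightly constrained, and then use the geometricity hypothesis (signature $(1,n-1)$ plus $K_\GG$ characteristic) together with norm-minimality to pin down $n$ and the whole Gram matrix. Concretely, after normalizing as in Remark~\ref{remark:matrix} one works with $r_i := \br(\ce_i) \ne 0$ and the classes $D_{ij} := \bl(\ce_i,\ce_j) = r_i\ce_j - r_j\ce_i \in \bp^\perp$, projected to $\NS(\GG)$. Lemmas~\ref{lemma:ne-pair}, \ref{lemma:ne-triple}, \ref{lemma:ne-quadruple} translate semiorthogonality of the basis into the identities
\begin{equation*}
\chi(\ce_i,\ce_j) = \tfrac{1}{r_ir_j}\bigl(q(D_{ij}) + r_i^2 + r_j^2\bigr) \quad (i<j), \qquad q(D_{ij},D_{jk}) = r_ir_k \quad (i<j<k), \qquad q(D_{ij},D_{kl}) = 0 \quad (i<j<k<l),
\end{equation*}
using that $\chi(\ce_j,\ce_i)=0$ for $i<j$. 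The first step is therefore to record these relations and to observe that consecutive differences $D_i := \bl(\ce_i,\ce_{i+1})$ (suitably rescaled) behave like an ``$A_{n-1}$-type'' chain in $\NS(\GG)$.

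**Key steps.** First, I would set up the toric system / chain of divisors: from the classes $D_i$ one gets, via the intersection relations above, a collection in $\NS(\GG)\otimes\QQ$ satisfying $A_i\cdot A_{i+1}=1$, $A_i\cdot A_j=0$ for $|i-j|\ge 2$ (among the "interior" indices), and $\sum A_i = -K_\GG$ up to the appropriate multiples — this is the content matching \cite[Section~3]{Pe} on toric systems, and the paper's introduction explicitly mentions associating a toric system and a fan to an exceptional basis. Second, I would use norm-minimality: if some pair $(\ce_i,\ce_{i+1})$ could be mutated to decrease $\sum r_j^2$, we get a contradiction; mutation replaces $r_{i+1}$ by $r_{i+1} - \chi(\ce_i,\ce_{i+1}) r_i$ (or the left/right variant), so norm-minimality forces inequalities like $|\chi(\ce_i,\ce_{i+1})| \le$ something bounded in terms of $r_i/r_{i+1}$, and in particular constrains the $r_i$ and the off-diagonal entries. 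Third — the crucial numerical input — I would invoke the signature hypothesis: $\NS(\GG)$ has signature $(1,n-1)$, so the Gram matrix $(c_{ij}) = -q$ restricted appropriately is negative definite on a corank-1 subspace; combined with $K_\GG$ characteristic and the relation $K_\GG^2 = q(K_\GG,K_\GG)$ being controlled by $\sum \chi(\ce_i,\ce_j)$-type sums, one derives that the only possibilities are the negative-definite $A_{n-2}$-like configurations that close up, forcing $n \le 4$. Fourth, for $n=3$ and $n=4$ I would solve the resulting small Diophantine system directly: geometricity plus norm-minimality leave only the Gram matrices $\chi_{\PP^2}$ and $\chi_{\PP^1\times\PP^1}$ of Example~\ref{example:p2-p1p1} (the $\FF_1$ matrix is excluded because its standard basis is not norm-minimal — its norm $1+1+\text{something}$ can be lowered, or rather one checks $\FF_1$'s lattice does represent the relevant values differently; in any case $\FF_1$ is blown up, hence not minimal, and the norm-minimal representatives on that lattice are the $\PP^1\times\PP^1$ ones). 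Finally, reading off $K_\GG^2$ from the normalized matrix gives $K_\GG^2 = 9 = 12-3$ for $\PP^2$ and $8 = 12-4$ for $\PP^1\times\PP^1$, and $r_i = \pm 1$ follows since $\sum r_i^2 = n$ with all $r_i$ nonzero integers.

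**Main obstacle.** The hard part will be the passage from "local" mutation inequalities (norm-minimality only constrains *adjacent* transpositions and their mutations) to the "global" conclusion that $n$ is small. One must show that a long chain of classes $D_1,\dots,D_{n-1}$ in a signature-$(1,n-1)$ lattice, with $A_i\cdot A_{i+1}$ positive, $A_i\cdot A_j=0$ for non-adjacent $i,j$, summing (up to sign and rescaling) to $K_\GG$, and with all the self-intersections constrained by norm-minimality, simply cannot exist for $n \ge 5$. This is essentially a rank/signature count: the Gram matrix of such a chain is a "tridiagonal" perturbation of $-K_\GG\otimes K_\GG$, and positivity of $A_i\cdot A_{i+1}$ forces the chain to wrap around in a way incompatible with only one positive eigenvalue once it is too long. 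I expect to follow Perling's combinatorial argument here — bounding the self-intersection numbers $A_i^2$ from below using norm-minimality (each $A_i^2 \ge -$ a small constant, else a mutation shortens the norm), then summing $K_\GG^2 = (\sum A_i)^2 = \sum A_i^2 + 2\sum A_i\cdot A_{i+1}$ and comparing against the Hodge-index bound $K_\GG^2 \le \tfrac{(K_\GG\cdot H)^2}{H^2}$ or directly against $K_\GG^2 \le 9$ (the Noether-type inequality for this setup), which immediately caps $n$. The geometricity ($K_\GG$ characteristic) is what makes the self-intersection parities work out so that the extremal configurations are exactly the two standard ones; verifying that no other Gram matrix survives in ranks $3$ and $4$ is then a finite check.
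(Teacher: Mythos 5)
Your setup is the right one and matches the paper's: the toric system $\lambda_{i,i+1}$ built from $\bl(\ce_i,\ce_{i+1})$, the intersection identities from Lemmas~\ref{lemma:ne-pair}--\ref{lemma:ne-quadruple}, the mutation inequalities coming from norm-minimality, and the signature of $\NS(\GG)$ (which, note, is $(1,n-3)$, not $(1,n-1)$, since $\rk\NS(\GG)=n-2$). The genuine gap is at the step you yourself flag as the main obstacle: how to cap $n$. Your proposed mechanism --- bound each $A_i^2$ from below using norm-minimality and compare $K_\GG^2=\sum A_i^2+2\sum A_i\cdot A_{i+1}$ against a ``Noether-type'' bound $K_\GG^2\le 9$ --- does not work, for two reasons. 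First, norm-minimality gives the \emph{opposite} inequality for negative self-intersections: the actual consequence (Lemma~\ref{lemma:a-r}) is that if $a_{i,i+1}<0$ then $a_{i,i+1}\le-(r_i^2+r_{i+1}^2)$, i.e.\ negative values are pushed \emph{further} negative, and an arbitrarily negative $a_{i,i+1}$ is perfectly consistent with norm-minimality (the mutated rank $|a_{i,i+1}+r_i^2|/|r_{i+1}|$ only grows as $a_{i,i+1}\to-\infty$). Second, there is no a priori bound on $K_\GG^2$ here: Theorem~\ref{theorem:minimal-cats} does not assume zero defect, and geometric surface-like pseudolattices with $K_\GG^2$ far from $9$ exist (cf.\ the ruled-surface example in Section~\ref{section:mmp}); the equality $K_\GG^2=12-n$ is a conclusion of the theorem, not an input.

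What actually closes the argument in the paper is the auxiliary rank-two lattice: one defines vectors $\ell_{i,i+1}\in N\cong\ZZ^2$ dual to the $\lambda_{i,i+1}$, derives the three-term relations $r_{i+1}^2\ell_{i-1,i}+a_{i,i+1}\ell_{i,i+1}+r_i^2\ell_{i+1,i+2}=0$ together with $\det(\ell_{i-1,i},\ell_{i,i+1})>0$, concludes that the convex hull of the $\ell_{i,i+1}$ contains the origin in its interior and hence has at least three vertices, and observes that $a_{i,i+1}<0$ (via the norm-minimality inequality above) forces $\ell_{i,i+1}$ into the convex hull of its two neighbours, so it cannot be a vertex. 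This produces at least three indices with $a_{i,i+1}\ge 0$, and only then does the signature lemma (two non-adjacent nonnegative squares force $n=4$ with proportional isotropic $\lambda$'s) yield $n\le 4$. You mention the fan in passing but never use it; without it, or some substitute producing three nonnegative $a_{i,i+1}$, the bound on $n$ does not follow. The remaining steps of your outline (the Markov equation for $n=3$; the hyperbolic-lattice analysis for $n=4$ with the $\FF_1$ Gram matrix excluded because its collection mutates to one containing a rank-zero element, violating norm-minimality) are essentially correct in outline, though $r_i=\pm1$ must be extracted from those case analyses rather than from $\sum r_i^2=n$, which is circular.
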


The proof takes the rest of this section, and uses essentially Perling's arguments.
It is obtained as a combination of Corollaries~\ref{corollary:minimal-n}, \ref{corollary:minimal-3}, and~\ref{corollary:minimal-4}.
Note that $n \ge 3$ by geometricity, since the rank of $\NS(\GG)$ is at least 1 as its signature is equal to $(1,n-3)$.
We will implicitly assume this inequality from now on.

\subsection{Toric system associated with an exceptional collection}

We start with a general important definition, which is a small modification of Perling's definition.
The history, in fact, goes back to the work~\cite{HP} where the authors associated a fan of a smooth toric surface to an exceptional collection of line bundles on any rational surface. 
The notion below is used to generalize this construction to exceptional collections consisting of objects of arbitrary non-zero ranks, and is a small modification of the one, introduced by Perling in~\cite{Pe}.

\begin{definition}[cf.~\protect{\cite[Definition~5.5]{Pe}}]
\label{def:toric}
Let $(\rL,K)$ be a lattice of rank $n - 2$ with a vector $K \in \rL$. 
Let $\lambda_{i,i+1} \in \rL \otimes \QQ$, $1 \le i \le n$, be a collection of $n$ vectors.
Put $\lambda_{i+n,i+n+1} = \lambda_{i,i + 1}$ for all $i \in \ZZ$ and
\begin{equation*}
\lambda_{i,j} = \lambda_{i,i+1} + \dots + \lambda_{j-1,j}
\qquad\text{for all $i < j < i + n$}.
\end{equation*}
We say that the collection $\lambda_{\bullet,\bullet}$ is a {\sf toric system} in $\rL$, if
\begin{enumerate}
\item 
there exist integers $r_i \in \ZZ$ such that for all $i \in \ZZ$ we have
\begin{equation*}
\lambda_{i-1,i} \cdot \lambda_{i,i+1} = \frac{1}{r_i^2}; 
\end{equation*}
\item 
for all $i < j < i + n - 2$ we have $\lambda_{i-1,i} \cdot \lambda_{j,j+1} = 0$;
\item 
for all $i < j < i + n$ the vectors $r_i r_j \lambda_{i,j}$ are integral, hence their squares are integers, i.e.,
\begin{equation*}
r_i r_j \lambda_{i,j} \in \rL
\qquad\text{and}\qquad 
a_{i,j} := (r_ir_j \lambda_{i,j})^2 \in \ZZ;
\end{equation*}
\item 
for all $i < j < i + n$ the fraction 
\begin{equation*}
n_{i,j} := \frac{a_{i,j} + r_i^2 + r_j^2}{r_ir_j} \in \ZZ
\end{equation*}
is an integer; 
\item 
$\lambda_{1,2} + \dots + \lambda_{n,n+1} = -K$; and finally
\item 
$\gcd(r_1,\dots,r_n) = 1$.
\end{enumerate}
\end{definition}

Note that if $\lambda_{\bullet,\bullet}$ is a toric system, the integers $r_i$ are determined up to a sign, and whether the other conditions hold or not does not depend on the choice of these signs.
Furthermore, if the signs of $r_i$ are chosen, the other integers $a_{i,j}$ and $n_{i,j}$ are determined unambiguously. 

\begin{remark}
The main difference between Perling's definition and Definition~\ref{def:toric} is that we demand integrality of $r_ir_j\lambda_{i,j}$ and of $n_{i,j}$ for all $i < j < i + n$, while Perling does that only for $j = i + 1$.
\end{remark}

For reader's convenience we write down the Gram matrix of the scalar product on $\rL \otimes \QQ$ on the set~$\lambda_{i,i+1}$ for $0 \le i \le n-1$ 
(note, however, that this set is not a basis in $\NS(X)_\QQ$, since $n > \dim \rL \otimes \QQ$, so the matrix below is degenerate):
\begin{equation}\label{eq:matrix-lambda}
(\lambda_{i,i+1} \cdot \lambda_{j,j+1}) = 
\begin{pmatrix}
\frac{a_{0,1}}{r_0^2r_1^2} & \frac1{r_1^2} & 0 & 0 & \dots & 0 & \frac1{r_0^2}
\\
\frac1{r_1^2} & \frac{a_{1,2}}{r_1^2r_2^2} & \frac1{r_2^2} & 0 & \dots & 0 & 0
\\
0 & \frac1{r_2^2} & \frac{a_{2,3}}{r_2^2r_3^2} & \frac1{r_3^2} & \dots & 0 & 0
\\
0 & 0 & \frac1{r_3^2} & \frac{a_{2,3}}{r_2^2r_3^2} & \dots & 0 & 0
\\
\vdots & \vdots & \vdots & \vdots & \ddots & \vdots & \vdots 
\\
0 & 0 & 0 & 0 & \dots & \frac{a_{n-2,n-1}}{r_{n-2}^2r_{n-1}^2} & \frac1{r_{n-1}^2}
\\
\frac1{r_n^2} & 0 & 0 & 0 & \dots & \frac1{r_{n-1}^2} & \frac{a_{n-1,n}}{r_{n-1}^2r_n^2} 
\end{pmatrix}
\end{equation}
Note that the matrix is cyclically tridiagonal and symmetric (since $r_n = r_0$ by periodicity of $\lambda_{\bullet,\bullet}$).
The submatrix of~\eqref{eq:matrix-lambda} obtained by deleting the last row and column is used extensively in~\cite{V}.

Recall the map $\bl \colon \bw2\GG \to \bp^\perp$ defined in~\eqref{eq:def-lambda}.
We implicitly compose it with $\bp^\perp \to \bp^\perp/\bp = \NS(\GG)$.

\begin{proposition}\label{proposition:lambda-toric}
Let $\ce_1,\dots,\ce_n$ be an exceptional basis in a surface-like pseudolattice $\GG$ with $\br(\ce_i) \ne 0$ for all $i$.
Extend it to an infinite sequence of vectors in $\GG$ by setting $\ce_{i + n} = \rS_\GG^{-1}(\ce_i)$ for all $i \in \ZZ$, where~$\rS_\GG$ is the Serre operator.
Then the collection of vectors
\begin{equation}\label{eq:def-r-lambda}
\lambda_{i,i+1} := \frac1{\br(\ce_i)\br(\ce_{i+1})} \bl(\ce_i,\ce_{i+1}) = \frac{\ce_{i+1}}{\br(\ce_{i+1})} - \frac{\ce_{i}}{\br(\ce_{i})} \in \NS(\GG) \otimes \QQ,
\qquad\qquad\text{$1 \le i \le n$,}
\end{equation}
is a toric system in $(\NS(\GG),K_\GG)$ with $r_i = \br(\ce_i)$.
\end{proposition}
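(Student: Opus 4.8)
The plan is to verify the six axioms of Definition~\ref{def:toric} one by one, feeding each of them from the corresponding lemma in Subsection on exceptional sequences, with $r_i = \br(\ce_i)$. First I would record the basic identities: by Lemma~\ref{lemma:serre-p} we have $\br(\ce_{i+n}) = \br(\rS_\GG^{-1}(\ce_i)) = \br(\ce_i)$, so the sequence $\{\br(\ce_i)\}$ is $n$-periodic and hence $\lambda_{i+n,i+1+n} = \lambda_{i,i+1}$ as required; moreover $\ce_{k+1}, \dots, \ce_{k+n}$ is an exceptional sequence for every $k$ (it generates the same helix), so all the lemmas apply to any $n$ consecutive members. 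For the telescoping identity $\lambda_{i,j} = \sum_{k=i}^{j-1} \lambda_{k,k+1}$, one simply notes from~\eqref{eq:def-r-lambda} that the right-hand side telescopes to $\ce_j/\br(\ce_j) - \ce_i/\br(\ce_i)$, which by definition of $\bl$ equals $\tfrac{1}{\br(\ce_i)\br(\ce_j)}\bl(\ce_i,\ce_j)$; I should be slightly careful that this is an identity in $\NS(\GG)\otimes\QQ$, i.e.\ modulo $\bp$, which is all that is claimed since $\bl$ has been composed with $\bp^\perp \to \NS(\GG)$.

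Now the axioms. Axiom~(1): $\lambda_{i-1,i}\cdot\lambda_{i,i+1} = \tfrac{1}{\br(\ce_{i-1})\br(\ce_i)^2\br(\ce_{i+1})} q(\bl(\ce_{i-1},\ce_i),\bl(\ce_i,\ce_{i+1}))$, and Lemma~\ref{lemma:ne-triple} applied to the exceptional triple $(\ce_{i-1},\ce_i,\ce_{i+1})$ gives $q(\bl(\ce_{i-1},\ce_i),\bl(\ce_i,\ce_{i+1})) = \br(\ce_{i-1})\br(\ce_{i+1})$, so the product is $1/\br(\ce_i)^2 = 1/r_i^2$. Axiom~(2): for $i < j < i+n-2$ the four indices $i-1 < i < j < j+1$ all lie in a window of $n$ consecutive indices, so $(\ce_{i-1},\ce_i,\ce_j,\ce_{j+1})$ is an exceptional sequence and Lemma~\ref{lemma:ne-quadruple} gives $q(\bl(\ce_{i-1},\ce_i),\bl(\ce_j,\ce_{j+1})) = 0$, hence $\lambda_{i-1,i}\cdot\lambda_{j,j+1}=0$. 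Axiom~(5): summing~\eqref{eq:def-r-lambda} over $i = 1,\dots,n$ telescopes to $\ce_{n+1}/\br(\ce_{n+1}) - \ce_1/\br(\ce_1)$, and since $\ce_{n+1} = \rS_\GG^{-1}(\ce_1)$ this is $\tfrac{1}{\br(\ce_1)^2}\bl(\ce_1,\rS_\GG^{-1}(\ce_1))$ modulo $\bp$; applying Lemma~\ref{lemma:lambda-k} (with $\rS_\GG^{-1}$ in place of $\rS_\GG$ — one checks the lemma's proof works verbatim for $\rS_\GG^{-1}$, or one applies it to $\rS_\GG(\ce_1)$ and reindexes) yields $\br(\ce_1)^2 K_\GG/\br(\ce_1)^2 = K_\GG$, hmm — I need the sign to come out as $-K_\GG$; I would double-check the orientation convention, presumably using $\ce_0 = \rS_\GG(\ce_n)$ and summing $\lambda_{0,1}+\dots+\lambda_{n-1,n}$ with $\ce_0$ on the left end, which flips the sign to $-K_\GG$. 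Axiom~(6): $\gcd(\br(\ce_1),\dots,\br(\ce_n)) = 1$ because $\br\colon\GG\to\ZZ$ has image all of $\ZZ$ — $\GG$ is unimodular by Lemma~\ref{lemma:chi-unimodular} since it carries an exceptional basis, so $\br$ is surjective by Lemma~\ref{lemma:ns} — and $\ce_1,\dots,\ce_n$ span $\GG$, so their rank values generate $\ZZ$.

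The remaining axioms (3) and (4) are where integrality must be extracted, and this is the main obstacle. For~(3), $r_i r_j \lambda_{i,j} = \br(\ce_j)\ce_i^{\,?}$ — more precisely, $\br(\ce_i)\br(\ce_j)\lambda_{i,j} = \bl(\ce_i,\ce_j) = \br(\ce_i)\ce_j - \br(\ce_j)\ce_i$ lies in $\bp^\perp \subset \GG$ by construction of $\bl$, hence its image in $\NS(\GG)$ is integral; then $a_{i,j} := (r_ir_j\lambda_{i,j})^2 = q(\bl(\ce_i,\ce_j),\bl(\ce_i,\ce_j)) \in \ZZ$ automatically since $q$ is $\ZZ$-valued on $\NS(\GG)$ by Lemma~\ref{lemma:chi-ker-r}. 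For~(4), I would compute $n_{i,j}$ directly from Lemma~\ref{lemma:ne-pair}: that lemma, applied to the exceptional pair $(\ce_i,\ce_j)$ (valid for all $i<j<i+n$ since they lie in one window and have nonzero ranks), reads $\chi(\ce_i,\ce_j) + \chi(\ce_j,\ce_i) = \tfrac{1}{r_ir_j}(q(\bl(\ce_i,\ce_j)) + r_i^2 + r_j^2) = \tfrac{1}{r_ir_j}(a_{i,j} + r_i^2 + r_j^2) = n_{i,j}$, and the left-hand side is manifestly an integer. So~(4) falls out of Lemma~\ref{lemma:ne-pair} essentially for free; the only genuine care needed throughout is the bookkeeping that every tuple of indices I invoke a lemma on genuinely forms a consecutive exceptional sub-sequence of the helix, which follows from the helix property recalled at the end of Section~\ref{section:pseudolattices}, and the consistent tracking of the $\bmod\ \bp$ ambiguity, which is harmless because all the pairings $q$ are computed in $\NS(\GG)$ where $\bp$ has been killed.
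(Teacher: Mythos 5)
Your proof follows the paper's argument essentially verbatim: each axiom of Definition~\ref{def:toric} is fed from the corresponding lemma (Lemma~\ref{lemma:ne-triple} for (1), Lemma~\ref{lemma:ne-quadruple} for (2), the telescoping identity $r_ir_j\lambda_{i,j}=\bl(\ce_i,\ce_j)\in\bp^\perp$ for (3), Lemma~\ref{lemma:ne-pair} for (4), Lemma~\ref{lemma:lambda-k} for (5), and unimodularity plus surjectivity of $\br$ for (6)). The one point you leave dangling, the sign in axiom (5), resolves without any reindexing: since $\bl$ is antisymmetric and $\ce_1=\rS_\GG(\ce_{n+1})$, you have $\bl(\ce_1,\ce_{n+1})=-\bl(\ce_{n+1},\rS_\GG(\ce_{n+1}))=-\br(\ce_{n+1})^2K_\GG$ by Lemma~\ref{lemma:lambda-k}, so the sum is $-K_\GG$ on the nose; the cyclic shift you propose would not change the sign, since the telescoped sum is the same for every window of $n$ consecutive indices. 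One further small point: the identity $\lambda_{i+n,i+n+1}=\lambda_{i,i+1}$ in $\NS(\GG)\otimes\QQ$ does not follow from periodicity of the ranks alone --- one computes $\lambda_{i+n,i+n+1}=\rS_\GG^{-1}(\lambda_{i,i+1})$ in $\GG\otimes\QQ$ and then invokes Lemma~\ref{lemma:serre-identity} (the Serre operator acts as the identity on $\NS(\GG)$); the same remark is needed whenever you apply an axiom at indices outside $\{1,\dots,n\}$.
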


Note that the equality $r_i = \br(\ce_i)$ is one of the results of the theorem --- we claim that the integers $r_i$ defined from the toric sequence agree (of course, up to a sign) with the ranks of the original vectors $\ce_i$.

\begin{proof}
Set $r_i := \br(\ce_i)$.
Then equations in Definition~\ref{def:toric}(1) follow from Lemma~\ref{lemma:ne-triple}, and those in Definition~\ref{def:toric}(2) from Lemma~\ref{lemma:ne-quadruple}.
Furthermore, we have 
\begin{equation*}
\lambda_{i,j} = \lambda_{i,i+1} + \dots + \lambda_{j-1,j} =
\left(\frac{\ce_{i+1}}{r_{i+1}} - \frac{\ce_{i}}{r_{i}}\right) + \dots + \left(\frac{\ce_{j}}{r_{j}} - \frac{\ce_{j-1}}{r_{j-1}}\right) = 
\frac{\ce_{j}}{r_{j}} - \frac{\ce_{i}}{r_{i}} = 
\frac1{r_ir_j} \bl(\ce_i,\ce_j),
\end{equation*}
hence $r_ir_j\lambda_{i,j} = \bl(\ce_i,\ce_j)$ is integral; this proves Definition~\ref{def:toric}(3).
By Lemma~\ref{lemma:ne-pair} we have
\begin{equation*}
n_{i,j} = \chi(\ce_i,\ce_j)
\end{equation*}
as soon as $i < j < i + n$; this proves Definition~\ref{def:toric}(4). 
Furthermore, 
\begin{equation*}
\sum_{i=1}^n \lambda_{i,i+1} = \bl(\ce_1,\ce_{n+1})/r_1r_{n+1} = \bl(\rS_\GG(\ce_{n+1}),\ce_{n+1})/r_{n+1}^2, 
\end{equation*}
and by Lemma~\ref{lemma:lambda-k} in $\NS(\GG)$ this is equal to $-K_\GG$; this proves Definition~\ref{def:toric}(5).
Finally, Definition~\ref{def:toric}(6) follows from the fact that $\ce_i$ form a basis of $\GG$, and the rank map is surjective since $\GG$ is unimodular.
\end{proof}

\begin{remark}\label{remark:toric-mutations}
One can define mutations of toric systems in a way compatible with mutations of exceptional collections.
However, we will not need this in our paper, so we skip the construction.
\end{remark}

Below we discuss some properties of toric systems.
We always denote by $r_i$, $a_{i,j}$, and~$n_{i,j}$ the integers determined by the toric system (with some choice of signs of $r_i$).
We consider the index set of a toric system as $\ZZ/n\ZZ$ with its natural cyclic order.
We say that indices $i$ and $j$ are {\sf cyclically distinct} if $i - j \ne 0 \pmod n$, and {\sf adjacent} if $i - j = \pm 1 \pmod n$.

\begin{lemma}\label{lemma:chains-independent}
If $\lambda_{\bullet,\bullet}$ is a toric system, then for any $i \in \ZZ$ and for any $0 \le k \le n-2$ the sequence of~$k$ vectors $(\lambda_{i,i+1},\dots,\lambda_{i+k-1,i+k})$ 
is linearly independent.
In particular, $\lambda_{i,i+1} \ne 0$ for all $i$.
\end{lemma}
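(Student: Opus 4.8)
The plan is to exploit the cyclically tridiagonal Gram matrix~\eqref{eq:matrix-lambda} of the vectors $\lambda_{i,i+1}$ together with condition~(1) of Definition~\ref{def:toric}, which guarantees that each diagonal entry $\lambda_{i-1,i}\cdot\lambda_{i,i+1} = 1/r_i^2$ is strictly positive. By periodicity and the cyclic symmetry of the index set it suffices to prove the statement for a fixed~$i$, say $i = 1$, and for a fixed length~$k$ with $1 \le k \le n-2$ (the case $k=0$ being vacuous). For such a contiguous block the relevant Gram matrix is the genuine (not cyclic) tridiagonal $k\times k$ symmetric matrix
\begin{equation*}
M_k = \bigl(\lambda_{s,s+1}\cdot\lambda_{t,t+1}\bigr)_{1 \le s,t \le k},
\end{equation*}
whose super- and subdiagonal entries are $1/r_{s+1}^2 \ne 0$ for $1 \le s \le k-1$ (this uses condition~(2), which forces all entries outside the three central diagonals of~\eqref{eq:matrix-lambda} to vanish — crucially the ``wrap-around'' corner entries $\frac1{r_0^2}$, $\frac1{r_n^2}$ only appear once we include all $n$ vectors, so they do not enter $M_k$ when $k \le n-2$).

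The key step is then a purely linear-algebra fact: a symmetric tridiagonal matrix with all super-diagonal entries nonzero has all its leading principal minors nonzero, hence is nonsingular, and therefore the vectors whose Gram matrix it is must be linearly independent. I would prove this by the standard three-term recursion for the leading principal minors $d_j = \det M_j$ of a tridiagonal matrix: writing $\alpha_j$ for the $(j,j)$ entry and $\beta_j \ne 0$ for the $(j,j+1)$ entry, one has $d_j = \alpha_j d_{j-1} - \beta_{j-1}^2 d_{j-2}$ with $d_0 = 1$. However, since I do not want to assume positivity of the $\alpha_j$ (only the off-diagonal entries are controlled a priori, via the~$r_i$), the cleanest route is different: if the vectors $\lambda_{1,2},\dots,\lambda_{k,k+1}$ were linearly dependent, take a minimal dependent subset; reindexing, suppose $\lambda_{m,m+1} = \sum_{s<m} c_s \lambda_{s,s+1}$ for some $m \le k$ with the $\lambda_{s,s+1}$, $s<m$, independent. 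Pairing both sides with $\lambda_{m-1,m}$ and using condition~(2) — which gives $\lambda_{s,s+1}\cdot\lambda_{m-1,m} = 0$ for all $s \le m-2$ — we get
\begin{equation*}
\frac{1}{r_m^2} = \lambda_{m-1,m}\cdot\lambda_{m,m+1} = c_{m-1}\,\lambda_{m-1,m}\cdot\lambda_{m-1,m} = \frac{c_{m-1}}{r_{m-1}^2},
\end{equation*}
so $c_{m-1} \ne 0$; but pairing instead with $\lambda_{m+1,m+2}$ (which lies among the $\lambda_{\bullet,\bullet}$ with index still at most $n-1$, using $m \le k \le n-2$, so that condition~(2) applies to all the relevant products) yields $0 = \lambda_{m+1,m+2}\cdot\lambda_{m,m+1} = c_{m-1}\lambda_{m+1,m+2}\cdot\lambda_{m-1,m} + \dots = 0$ — wait, that one is automatically zero. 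The productive pairing is instead with $\lambda_{m,m+1}$ itself together with an induction peeling off the top index: pair with $\lambda_{m-1,m}$ to see $c_{m-1}\neq0$, then subtract $c_{m-1}\lambda_{m-1,m+1}$-type corrections and induct downward; equivalently, one shows by descending induction on $s$ from $m-1$ to $1$ that $c_s = r_{s}^2/(r_m^2)\cdot(\text{product of ratios})\neq 0$, and finally a pairing with $\lambda_{0,1}$ (index~$0$, still within range since the block has length $\le n-2$, so it does not collide cyclically with $\lambda_{m,m+1}$) gives $0 = c_1\lambda_{0,1}\cdot\lambda_{1,2} = c_1/r_1^2$, forcing $c_1 = 0$, a contradiction.

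The main obstacle, and the only subtle point, is bookkeeping the cyclic index range: condition~(2) of Definition~\ref{def:toric} states $\lambda_{i-1,i}\cdot\lambda_{j,j+1} = 0$ only for $i < j < i+n-2$, i.e.\ orthogonality holds between $\lambda_{\bullet,\bullet}$'s whose index ranges are not too far apart cyclically, and fails for ``adjacent'' pairs and for the wrap-around pair. The hypothesis $k \le n-2$ is exactly what ensures that within a block of $k$ consecutive vectors every non-adjacent pair (in the block) is genuinely orthogonal and no wrap-around degeneracy occurs, so the Gram matrix really is honestly tridiagonal. Once this is set up correctly the linear independence is forced, and in particular taking $k=1$ gives $\lambda_{i,i+1}\cdot\lambda_{i,i+1} = 1/r_i^2 \ne 0$, so $\lambda_{i,i+1}\ne 0$ for all~$i$. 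I expect the write-up to need a careful statement of which index products are covered by condition~(2), but no serious computation beyond the one-line pairings above. $\qed$
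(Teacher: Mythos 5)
Your write-up is not a proof: the argument you actually commit to is broken, and — ironically — the pairing you dismiss mid-proof as ``automatically zero'' is the one that works and is exactly the paper's argument. Concretely, you wrote $\lambda_{m+1,m+2}\cdot\lambda_{m,m+1}=0$, but these two vectors are \emph{adjacent}, so by Definition~\ref{def:toric}(1) their product is $1/r_{m+1}^2\neq 0$; condition~(2) only kills products of non-adjacent, distinct pairs. Pairing the putative dependence $\lambda_{m,m+1}=\sum_{s<m}c_s\lambda_{s,s+1}$ with $\lambda_{m+1,m+2}$ therefore gives $1/r_{m+1}^2$ on the left, while every term on the right vanishes by condition~(2) (for $1\le s\le m-1$ and $m\le n-2$, the index $s$ is never congruent to $m$, $m+1$, or $m+2$ modulo $n$) — an immediate contradiction, with no further induction needed. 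This is precisely how the paper proves the lemma.

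The route you pursued instead has two genuine errors. First, the identity $\lambda_{m-1,m}\cdot\lambda_{m-1,m}=1/r_{m-1}^2$ is false: condition~(1) controls the product of \emph{consecutive distinct} vectors, whereas the self-product is the uncontrolled diagonal entry $a_{m-1,m}/(r_{m-1}^2r_m^2)$ of~\eqref{eq:matrix-lambda}, which can be zero or negative. Second, the pairing with $\lambda_{m-1,m}$ also picks up the term $c_{m-2}\,\lambda_{m-2,m-1}\cdot\lambda_{m-1,m}=c_{m-2}/r_{m-1}^2$, which you dropped. So the claim $c_{m-1}\neq 0$ and the hand-waved ``descending induction'' showing all $c_s\neq 0$ are unsubstantiated, and the final pairing with $\lambda_{0,1}$, which correctly yields $c_1=0$, produces no contradiction on its own. (It could be salvaged: having $c_1=0$, pair with $\lambda_{1,2}$ to get $c_2=0$, and so on until $\lambda_{m,m+1}=0$, then pair with $\lambda_{m-1,m}$; but that is not what you wrote, and it is longer than the one-line argument you discarded.)
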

\begin{proof}
We prove by induction on $k$.
When $k = 0$ there is nothing to prove.
Assume now that $k \ge 1$ and the claim for $k - 1$ is proved.
Assume that $\lambda_{i+k,i+k+1} = x_i\lambda_{i,i+1} + \dots + x_{i+k-1}\lambda_{i+k-1,i+k}$.
Consider the scalar product of this equality with $\lambda_{i+k+1,i+k+2}$.
The left hand side equals $1/r_{i+k+1}^2 \ne 0$ by Definition~\ref{def:toric}(1), while the right hand side is zero by Definition~\ref{def:toric}(2).
This contradiction proves the step.
\end{proof}

The next lemma uses the signature assumption on the Neron--Severi lattice.

\begin{lemma}\label{lemma:extremal-pair}
Let $\lambda_{\bullet,\bullet}$ be a toric system in a lattice $\rL$ of signature $(1,n-3)$.
If both $a_{i,i+1} \ge 0$ and~$a_{j,j+1} \ge 0$ \textup{(}i.e., $\lambda_{i,i+1}^2 \ge 0$ and $\lambda_{j,j+1}^2 \ge 0$\textup{)} for cyclically distinct $i$ and $j$ then 
\begin{itemize}
\item 
either $i$ and $j$ are adjacent, 
\item 
or $n = 4$, $j = i + 2 \pmod n$, $\lambda_{i,i+1}$ and $\lambda_{j,j+1}$ are proportional, and $a_{i,i+1} = a_{j,j+1} = 0$.
\end{itemize}
\end{lemma}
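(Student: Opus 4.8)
The plan is to exploit that $\rL$ has Lorentzian signature: in a nondegenerate quadratic space of signature $(1,m)$ with $m\ge1$, the orthogonal complement of a vector of positive square is negative definite, and the quotient of the orthogonal complement of a nonzero isotropic vector by that vector is again negative definite. These two facts alone will force the dichotomy, with the specific values $\lambda_{k-1,k}\cdot\lambda_{k,k+1}=1/r_k^2$ entering only at the very end.

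First I would dispose of the trivial case: if $i$ and $j$ are adjacent we are in the first alternative, so I assume from now on that they are not; then $n\ge4$, because for $n=3$ any two cyclically distinct indices are adjacent, and hence $\rk\rL=n-2\ge2$. Abbreviate $\lambda_k:=\lambda_{k,k+1}$. By Definition~\ref{def:toric}(1), $\lambda_{k-1}\cdot\lambda_k=1/r_k^2>0$; by Definition~\ref{def:toric}(2), $\lambda_k\cdot\lambda_l=0$ whenever $k$ and $l$ are cyclically distinct and non-adjacent (this is exactly the cyclic tridiagonality visible in~\eqref{eq:matrix-lambda}); and by Lemma~\ref{lemma:chains-independent}, $\lambda_k\ne0$ for every $k$. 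In particular $\lambda_i\cdot\lambda_j=0$, and, since $(r_kr_{k+1})^2>0$, the hypotheses read $\lambda_i^2\ge0$ and $\lambda_j^2\ge0$.

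The key step is to show $\lambda_i^2=\lambda_j^2=0$. If $\lambda_i^2>0$, then $\rL\otimes\QQ=\langle\lambda_i\rangle\oplus\lambda_i^\perp$ is an orthogonal decomposition with $\langle\lambda_i\rangle$ positive definite, so $\lambda_i^\perp$ has signature $(0,n-3)$ and is negative definite; but $\lambda_j\in\lambda_i^\perp$ and $\lambda_j\ne0$, forcing $\lambda_j^2<0$ and contradicting $\lambda_j^2\ge0$. Hence $\lambda_i^2=0$, i.e.\ $a_{i,i+1}=0$, and symmetrically $a_{j,j+1}=0$. Now, since $\lambda_i$ is a nonzero isotropic vector, $\lambda_i^\perp$ is a hyperplane with radical $\langle\lambda_i\rangle$ and $\lambda_i^\perp/\langle\lambda_i\rangle$ is negative definite (signature $(0,n-4)$); as $\lambda_j\in\lambda_i^\perp$ with $\lambda_j^2=0$, the class of $\lambda_j$ in this quotient is an isotropic vector of an anisotropic space, hence zero, so $\lambda_j\in\langle\lambda_i\rangle$ and, being nonzero, $\lambda_j=t\lambda_i$ for some $t\in\QQ$, $t\ne0$. (Alternatively one invokes that a form of signature $(1,m)$, $m\ge1$, has Witt index $1$, hence no totally isotropic plane.) Finally, to see $n=4$, I would suppose $n\ge5$ and, after a cyclic shift, take $i=0$, so that $j$ lies in $\{2,\dots,n-2\}$ modulo $n$; then at least one of $j-1,j+1$ still lies in $\{2,\dots,n-2\}$, the only way both can fail being $j=2$ and $j=n-2$ at once, i.e.\ $n=4$. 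For such an index $j+\varepsilon$ ($\varepsilon=\pm1$) the pair $\{0,j+\varepsilon\}$ is cyclically distinct and non-adjacent, so $\lambda_0\cdot\lambda_{j+\varepsilon}=0$, whereas $j$ and $j+\varepsilon$ are adjacent, so $\lambda_j\cdot\lambda_{j+\varepsilon}\ne0$ by Definition~\ref{def:toric}(1); with $\lambda_j=t\lambda_0$ this gives $\lambda_0\cdot\lambda_{j+\varepsilon}\ne0$, a contradiction. So $n=4$, whence the unique index non-adjacent to $i$ is $i+2\pmod 4$, giving $j\equiv i+2\pmod n$. Assembling $n=4$, $j\equiv i+2$, $\lambda_{i,i+1}\parallel\lambda_{j,j+1}$ and $a_{i,i+1}=a_{j,j+1}=0$ is the second alternative.

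I expect the only genuinely delicate point to be the index bookkeeping in the last step — checking that when $n\ge5$ one of the two cyclic neighbours of $j$ is non-adjacent to $i$, and keeping straight that "cyclically distinct and non-adjacent" is precisely the combinatorics under which Definition~\ref{def:toric}(2) yields $\lambda_k\cdot\lambda_l=0$ (for $n=4$ this is the pair $\{i,i+2\}$ itself). Everything else is immediate from the signature hypothesis, via the single observation that the orthogonal complement of an isotropic vector, taken modulo that vector, is anisotropic.
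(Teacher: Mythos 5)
Your proof is correct. The heart of it is the same as the paper's: the signature hypothesis forbids a positive-semidefinite plane spanned by the two orthogonal vectors $\lambda_{i,i+1}$ and $\lambda_{j,j+1}$, forcing them to be proportional and isotropic. You unpack this into two standard Lorentzian facts (the orthogonal complement of a positive vector is negative definite; the complement of a nonzero isotropic vector modulo that vector is anisotropic), which gives you $a_{i,i+1}=a_{j,j+1}=0$ first and proportionality second, whereas the paper states the single fact that $\rL_\QQ$ has no non-negatively determined rank-$2$ sublattices, gets proportionality first, and deduces vanishing of the squares at the end --- a cosmetic difference. The genuine divergence is in the deduction of $n=4$: the paper applies Lemma~\ref{lemma:chains-independent} to both arcs between $i$ and $j$, obtaining $j-i\ge n-2$ and $(i+n)-j\ge n-2$, and sums; you instead argue directly from Definition~\ref{def:toric}(1),(2) that for $n\ge 5$ some cyclic neighbour $j\pm 1$ of $j$ is still non-adjacent to $i$, so $\lambda_{i,i+1}\cdot\lambda_{j\pm1,j\pm2}$ would have to be both zero and nonzero once $\lambda_{j,j+1}=t\lambda_{i,i+1}$. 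Your index bookkeeping checks out (the only escape, $j-1<2$ and $j+1>n-2$ simultaneously, forces $n=4$), and your route uses Lemma~\ref{lemma:chains-independent} only for the nonvanishing $\lambda_{k,k+1}\ne 0$; this makes the step marginally more self-contained but of essentially the same length and difficulty as the paper's.
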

\begin{proof}
We may assume that $i < j < i + n$.
Assume $i$ and $j$ are not adjacent. 
Then the intersection form on the sublattice of $\rL_\QQ$ spanned by $\lambda_{i,i+1}$ and $\lambda_{j,j+1}$ looks as
\begin{equation*}
\left(
\begin{smallmatrix}
a_{i,i+1} & 0 \\ 0 & a_{j,j+1}
\end{smallmatrix}
\right),
\end{equation*}
hence is non-negatively determined. 
But by the signature assumption $\rL_\QQ$ does not contain non-negatively determined sublattices of rank 2, hence the vectors $\lambda_{i,i+1}$ and $\lambda_{j,j+1}$ are proportional.
By Lemma~\ref{lemma:chains-independent} it follows that $j -i \ge n - 2$ and $(i + n) - j \ge n - 2$. 
Summing up, we deduce $n \ge 2n - 4$, hence $n \le 4$.
Since for $n = 3$ any two cyclically distinct $i$ and $j$ are adjacent, we conclude that $n = 4$, and non-adjacency means $j = i + 2$.
Finally, $\lambda_{i,i+1} \cdot \lambda_{j,j+1}= 0$ by Definition~\ref{def:toric}(2), and since the vectors are proportional and nonzero, we also have $\lambda_{i,i+1}^2 = \lambda_{j,j+1}^2 = 0$, hence $a_{i,i+1} = a_{j,j+1} = 0$.
\end{proof}

\subsection{Fan associated with a toric system}

Following Perling, we associate with a toric system $\lambda_{\bullet,\bullet}$ in the Neron--Severi lattice $\NS(\GG)$ of a surface-like pseudolattice $\GG$ a fan in~$\ZZ^2$.
As it is customary in toric geometry, we consider a pair of mutually dual free abelian groups
\begin{equation*}
M \cong \ZZ^2
\qquad\text{and}\qquad 
N := M^\vee.
\end{equation*}
We define a map $M \to \ZZ^n$ as the kernel of the map $\ZZ^n \to \NS(\GG)_\QQ$ defined by taking the base vectors to~$\lambda_{i,i+1}$. 
Then we consider the dual map $(\ZZ^n)^\vee \to N$ and denote the images of the base vectors by~$\ell_{i,i+1} \in N$.
The definition of $\ell_{i,i+1}$ implies that
\begin{equation}\label{eq:ell-relation-tensor}
\sum_{i=1}^n \ell_{i,i+1} \otimes \lambda_{i,i+1} = 0
\qquad\text{in $N \otimes \NS(\GG)_\QQ$},
\qquad\text{and}\qquad
\text{$N$ is generated by $\ell_{i,i+1}$}
\end{equation}
(the toric system $(\lambda_{i,i+1})$ defines an element of $\NS(\GG)_\QQ \otimes (\ZZ^n)^\vee \cong \Hom(\ZZ^n, \NS(\GG)_\QQ)$, 
while the collection of vectors~$(\ell_{i,i+1})$ defines an element of $\ZZ^n \otimes N \cong \Hom(M,\ZZ^n)$, and the sum in~\eqref{eq:ell-relation-tensor} is an expression for the composition of these maps).

\begin{remark}
Perling considers the fan in $N_\RR$ generated by the vectors $\ell_{i,i+1}$, proves that it defines a projective toric surface (\cite[Proposition~10.6]{Pe}), and shows some nice results about it.
For instance, he proves that its singularities are T-singularities (which gives a nice connection to Hacking's results~\cite{Ha}),
and relates mutations of exceptional collections and their toric systems (see Remark~\ref{remark:toric-mutations}) to degenerations of toric surfaces.
We, however, will not need this material and suggest the interested reader to look into~\cite{Pe}.
So, toric geometry will not be explicitly discussed below, but an experienced reader will notice it always lurking at the background.
\end{remark}

The general tensor relation~\eqref{eq:ell-relation-tensor} implies many linear relations.

\begin{proposition}\label{proposition:ell-relation}
For every $i \in \ZZ$ we have
\begin{equation}\label{eq:ell-relation}
r_{i+1}^2\ell_{i-1,i} + a_{i,i+1}\ell_{i,i+1} + r_{i}^2\ell_{i+1,i+2} = 0.
\end{equation}
\end{proposition}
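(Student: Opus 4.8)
The plan is to extract the asserted linear relation from the single tensor identity in~\eqref{eq:ell-relation-tensor} by pairing it against a suitable element of $\NS(\GG)^\vee \otimes \QQ$. Concretely, the relation $\sum_j \ell_{j,j+1} \otimes \lambda_{j,j+1} = 0$ in $N \otimes \NS(\GG)_\QQ$ means that applying any linear functional $\phi \in \NS(\GG)^\vee_\QQ$ in the second factor produces the vanishing relation $\sum_j \phi(\lambda_{j,j+1}) \, \ell_{j,j+1} = 0$ in $N_\QQ$. Since by Lemma~\ref{lemma:chains-independent} the vectors $\lambda_{j,j+1}$ for $j$ running over $n-1$ consecutive indices are linearly independent in $\NS(\GG)_\QQ$, and $\NS(\GG)_\QQ$ is $(n-2)$-dimensional, I can choose $\phi$ to be (a multiple of) $q(-, \lambda_{i,i+1})$, i.e. pair the tensor relation against $\lambda_{i,i+1}$ using the intersection form $q$. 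This kills every term $\lambda_{j,j+1}$ with $j$ not adjacent to $i$ (by Definition~\ref{def:toric}(2), as $\lambda_{j,j+1}\cdot\lambda_{i,i+1}=0$ whenever $i,j$ are cyclically distinct and non-adjacent), leaving exactly the three surviving terms $j = i-1, i, i+1$.

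The three surviving coefficients are then read off from the Gram matrix~\eqref{eq:matrix-lambda}, equivalently from Definition~\ref{def:toric}(1) and~(3): we have $\lambda_{i-1,i}\cdot\lambda_{i,i+1} = 1/r_i^2$, $\lambda_{i,i+1}\cdot\lambda_{i,i+1} = a_{i,i+1}/(r_i^2 r_{i+1}^2)$ — wait, I should be careful: $a_{i,i+1} = (r_i r_{i+1}\lambda_{i,i+1})^2$, so $\lambda_{i,i+1}^2 = a_{i,i+1}/(r_i^2 r_{i+1}^2)$ — and $\lambda_{i+1,i+2}\cdot\lambda_{i,i+1} = 1/r_{i+1}^2$. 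Thus pairing~\eqref{eq:ell-relation-tensor} against $\lambda_{i,i+1}$ gives
\begin{equation*}
\frac{1}{r_i^2}\,\ell_{i-1,i} + \frac{a_{i,i+1}}{r_i^2 r_{i+1}^2}\,\ell_{i,i+1} + \frac{1}{r_{i+1}^2}\,\ell_{i+1,i+2} = 0
\end{equation*}
in $N_\QQ$, and multiplying through by $r_i^2 r_{i+1}^2$ yields exactly~\eqref{eq:ell-relation}, a relation among integral vectors. (One checks the index bookkeeping via the periodicity convention $\lambda_{j+n,j+n+1}=\lambda_{j,j+1}$, $r_{j+n}=r_j$, so that the three terms are genuinely $\ell_{i-1,i},\ell_{i,i+1},\ell_{i+1,i+2}$ even when $i$ wraps around mod $n$.)

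One small gap to close: the argument above produces a relation with \emph{rational} coefficients valid after pairing with one functional, but I want a relation holding in $N$ itself. The cleanest way is to observe that the map $\ZZ^n \to \NS(\GG)_\QQ$ sending the basis vectors to $\lambda_{i,i+1}$ has a kernel $M' := \ker$ of rank $2$ (since the $\lambda$'s span $\NS(\GG)_\QQ$ and there are $n$ of them, $n-(n-2)=2$), and $N = (\ZZ^n)^\vee / (M')^\perp$-type quotient is by construction the cokernel dual; the relations among the $\ell_{i,i+1}$ are \emph{exactly} the image of $M' \hookrightarrow \ZZ^n$. So it suffices to exhibit, for each $i$, an element of $\ZZ^n$ lying in the kernel of $\ZZ^n \to \NS(\GG)_\QQ$ whose coordinates are $(\dots, r_{i+1}^2, a_{i,i+1}, r_i^2, \dots)$ in positions $i-1,i,i+1$ and $0$ elsewhere; its being in the kernel is precisely the scalar identity $r_{i+1}^2 \lambda_{i-1,i} + a_{i,i+1}\lambda_{i,i+1} + r_i^2\lambda_{i+1,i+2} = 0$ in $\NS(\GG)_\QQ$, which in turn follows because this vector is orthogonal (under $q$) to $\lambda_{i,i+1}$ by the computation above \emph{and} — since the $\lambda$'s with indices adjacent to or equal to $i$ together with enough others span, or more simply since $q$ is nondegenerate on $\NS(\GG)_\QQ$ and the vector pairs to zero against a spanning set obtained by also pairing against $\lambda_{i-1,i}$ and $\lambda_{i+1,i+2}$ and using Definition~\ref{def:toric}(2) again — it must vanish. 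I expect the main (though still minor) obstacle to be precisely this last bookkeeping: verifying that pairing the candidate combination against \emph{all} of $\lambda_{i-2,i-1},\dots,\lambda_{i+2,i+3}$ gives zero, so that by nondegeneracy of $q$ and Lemma~\ref{lemma:chains-independent} the combination is genuinely $0$ in $\NS(\GG)_\QQ$ and hence lifts to a relation in $N$; all of this is routine given Definition~\ref{def:toric}(1)--(3) and~\eqref{eq:matrix-lambda}.
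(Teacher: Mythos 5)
Your first two paragraphs are exactly the paper's proof: the paper simply pairs \eqref{eq:ell-relation-tensor} against $r_i^2 r_{i+1}^2\lambda_{i,i+1}$ in the second tensor factor and reads off the three surviving coefficients from Definition~\ref{def:toric}(1)--(3). That part is complete and correct, including the index bookkeeping.

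The ``small gap'' you then try to close does not exist, and the patch you propose for it is actually wrong. Applying the functional $q(-,\lambda_{i,i+1})$ to the $\NS(\GG)_\QQ$ factor of \eqref{eq:ell-relation-tensor} already yields a genuine vector identity in $N\otimes\QQ$; after clearing denominators it is an integral linear combination of elements of $N$ vanishing in $N_\QQ$, hence vanishing in $N$ since $N$ is free. Nothing more is needed. By contrast, your third paragraph asserts that relations among the $\ell_{j,j+1}$ correspond to elements of $M=\ker(\ZZ^n\to\NS(\GG)_\QQ)$, and in particular that $r_{i+1}^2\lambda_{i-1,i}+a_{i,i+1}\lambda_{i,i+1}+r_i^2\lambda_{i+1,i+2}=0$ in $\NS(\GG)_\QQ$. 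This is false: a vector $(c_j)$ satisfies $\sum c_j\ell_{j,j+1}=0$ precisely when it is orthogonal to $M$ under the standard pairing (it lies in $M^\perp$, the saturation of the image of the transposed map), not when it lies in $M$; for $\PP^2$ one has $\lambda_{12}=\lambda_{23}=\lambda_{31}=H$ and the combination equals $3H\neq 0$, while $\ell_{12}+\ell_{23}+\ell_{31}=0$ does hold. Had the proof actually depended on that paragraph it would be broken; as it stands, you should simply delete it.
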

\begin{proof}
Take the scalar product of~\eqref{eq:ell-relation-tensor} with $r_i^2r_{i+1}^2\lambda_{i,i+1}$ and use Definition~\ref{def:toric}.
\end{proof}

\begin{corollary}\label{corollary:l-independent}
For every $i \in \ZZ$ the vectors $\ell_{i-1,i}$ and $\ell_{i,i+1}$ are linearly independent.
\end{corollary}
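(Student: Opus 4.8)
The plan is to argue by contradiction using the linear relation~\eqref{eq:ell-relation} from Proposition~\ref{proposition:ell-relation} together with the fact that the vectors $\ell_{i,i+1}$ generate $N \cong \ZZ^2$ (the second assertion in~\eqref{eq:ell-relation-tensor}). Suppose $\ell_{i-1,i}$ and $\ell_{i,i+1}$ are linearly dependent for some $i$. Since $r_i = \br(\ce_i) \ne 0$ (or, in the toric-system language, $r_i$ is a nonzero integer by Definition~\ref{def:toric}(1), as $\lambda_{i-1,i}\cdot\lambda_{i,i+1} = 1/r_i^2$ is defined), the relation~\eqref{eq:ell-relation} reads $r_{i+1}^2\ell_{i-1,i} + a_{i,i+1}\ell_{i,i+1} + r_i^2\ell_{i+1,i+2} = 0$, and since $r_i^2 \ne 0$, this expresses $\ell_{i+1,i+2}$ as a linear combination of $\ell_{i-1,i}$ and $\ell_{i,i+1}$. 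If those two are dependent, then $\ell_{i+1,i+2}$ lies in the rank-$\le 1$ subgroup they span.

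The key step is to propagate this: applying the same relation with index $i$ replaced by $i+1$, and then $i+2$, and so on around the cycle $\ZZ/n\ZZ$, I would show by induction that \emph{every} $\ell_{j,j+1}$ lies in the subgroup $V \subset N\otimes\QQ$ spanned by $\ell_{i-1,i}$ and $\ell_{i,i+1}$, which by assumption has dimension $\le 1$. But then the $\ell_{j,j+1}$ generate a subgroup of rank $\le 1$, contradicting the fact that they generate $N$, which has rank $2$. Hence no such $i$ exists.

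The only subtlety — and the main thing to get right — is the base of the induction and the direction of propagation: the relation~\eqref{eq:ell-relation} with index $k$ involves $\ell_{k-1,k}, \ell_{k,k+1}, \ell_{k+1,k+2}$, so knowing two consecutive ones in $V$ forces the next one in $V$; starting from $\ell_{i-1,i}, \ell_{i,i+1} \in V$ we get $\ell_{i+1,i+2} \in V$, then $\ell_{i+2,i+3} \in V$, and after $n$ steps we have wrapped around (using the periodicity $\ell_{j+n,j+n+1} = \ell_{j,j+1}$) and covered all of them. Since $r_k^2 \ne 0$ for every $k$, each step of the induction is legitimate: one never divides by zero. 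I expect this wrap-around bookkeeping to be the only place requiring care; everything else is immediate from Proposition~\ref{proposition:ell-relation} and~\eqref{eq:ell-relation-tensor}.
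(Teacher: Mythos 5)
Your proof is correct, but it takes a different route from the paper's. The paper argues dually: assuming $\ell_{i-1,i}$ and $\ell_{i,i+1}$ are dependent, it picks a nonzero covector $m\in M$ annihilating both, evaluates $m$ on the tensor relation~\eqref{eq:ell-relation-tensor}, and then invokes Lemma~\ref{lemma:chains-independent} (linear independence of the $n-2$ consecutive vectors $\lambda_{i+1,i+2},\dots,\lambda_{i+n-2,i+n-1}$) to force $m(\ell_{j,j+1})=0$ for all $j$, contradicting the fact that the $\ell_{j,j+1}$ generate $N$. You instead work entirely on the $N$-side, propagating the three-term recurrences of Proposition~\ref{proposition:ell-relation} around the cycle to trap all the $\ell_{j,j+1}$ in a rank-$\le 1$ subgroup, and reach the same contradiction with generation of $N$. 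Your induction is sound: each step only divides by $r_k^2$, and $r_k\ne 0$ for every $k$ by Definition~\ref{def:toric}(1) (in the setting of Proposition~\ref{proposition:lambda-toric}, $r_k=\br(\ce_k)\ne 0$ by hypothesis), and the wrap-around covers all residues mod $n$. What each approach buys: yours avoids Lemma~\ref{lemma:chains-independent} entirely and leans only on the already-proved Proposition~\ref{proposition:ell-relation}, so it is more self-contained at this point in the text; the paper's duality argument is shorter once Lemma~\ref{lemma:chains-independent} is in hand and makes more transparent which structural input (independence of consecutive $\lambda$'s) is really driving the conclusion.
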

\begin{proof}
Assume $\ell_{i-1,i}$ and $\ell_{i,i+1}$ are linearly dependent.
Then there is a nonzero element $m \in M$ such that $m(\ell_{i-1,i}) = m(\ell_{i,i+1}) = 0$.
Evaluating it on~\eqref{eq:ell-relation-tensor}, we see that 
\begin{equation*}
m(\ell_{i+1,i+2})\lambda_{i+1,i+2} + \dots + m(\ell_{i+n-2,i+n-1})\lambda_{i+n-2,i+n-1} = 0.
\end{equation*}
But by Lemma~\ref{lemma:chains-independent} the vectors $\lambda_{i+1,i+2},\dots,\lambda_{i+n-2,i+n-1}$ are linearly independent.
It follows that $m(\ell_{j,j+1}) = 0$ for all $j$.
This contradicts with the fact that $\ell_{j,j+1}$ generate $N$.
\end{proof}

On the other hand, we have relations of a completely different sort.
Denote by $\det \colon N \times N \to \ZZ$ a skew-symmetric bilinear form on $N$ which induces an isomorphism $\bw2N \xrightarrow{\ \sim\ } \ZZ$ (i.e., a volume form).

\begin{proposition}\label{proposition:ell-relation-det}
For every $i \in \ZZ$ we have
\begin{equation}\label{eq:ell-relation-dets}
\det(\ell_{i,i+1},\ell_{i+1,i+2}) \ell_{i-1,i} + \det(\ell_{i+1,i+2},\ell_{i-1,i}) \ell_{i,i+1} + \det(\ell_{i-1,i},\ell_{i,i+1}) \ell_{i+1,i+2} = 0.
\end{equation}
\end{proposition}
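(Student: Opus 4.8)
The plan is to recognize that \eqref{eq:ell-relation-dets} is nothing but the universal ``Cramer'' linear dependence relation satisfied by \emph{any} three vectors in a free abelian group of rank $2$; none of the toric-system axioms or the specific construction of the $\ell_{i,i+1}$ enters. So I would first isolate the following purely lattice-theoretic claim: if $N$ has rank $2$ and $\det\colon N\times N\to\ZZ$ is any volume form, then for all $u,v,w\in N$ one has
\[
\det(v,w)\,u+\det(w,u)\,v+\det(u,v)\,w=0 .
\]

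To prove this claim I would argue that the map $E\colon N\times N\times N\to N$ given by the left-hand side is $\ZZ$-trilinear (each summand is linear in each slot) and alternating: interchanging any two of its arguments changes the sign of every $\det$-factor simultaneously and permutes the three summands among themselves, so it carries $E(u,v,w)$ to $-E(u,v,w)$. A trilinear alternating map valued in a rank-$2$ module must vanish, because by multilinearity it is determined by its values on triples $(e_a,e_b,e_c)$ of basis vectors, and any such triple repeats a basis vector (two symbols in three slots), whence the value is $0$ by the alternating property. Equivalently, in coordinates for a basis of $N$ in which $\det$ is the standard area form, each coordinate of the displayed sum is the cofactor expansion of a $3\times 3$ determinant with a repeated row. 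I would also note that the identity is insensitive to the choice of $\det$, since replacing $\det$ by $-\det$ merely negates the whole expression.

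Then the proposition follows by specializing $u=\ell_{i-1,i}$, $v=\ell_{i,i+1}$, $w=\ell_{i+1,i+2}$ in the rank-$2$ lattice $N$. There is really no obstacle to overcome here; the only point worth stressing is conceptual, namely that \eqref{eq:ell-relation-dets} is this universal relation rather than a consequence of the toric-system conditions. For later use one may additionally remark that, in view of Corollary~\ref{corollary:l-independent}, the pair $(\ell_{i-1,i},\ell_{i,i+1})$ is a basis of $N_\QQ$, so the linear relation among $\ell_{i-1,i},\ell_{i,i+1},\ell_{i+1,i+2}$ is unique up to scaling; hence \eqref{eq:ell-relation-dets} is necessarily proportional to the relation \eqref{eq:ell-relation} of Proposition~\ref{proposition:ell-relation}, which is how the determinants appearing in \eqref{eq:ell-relation-dets} will get related to the integers $a_{i,i+1}$ and $r_i$.
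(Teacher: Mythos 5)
Your proof is correct and matches the paper, which simply dismisses this identity as ``standard linear algebra''; you have supplied the standard argument (trilinearity plus the alternating property force a $N$-valued trilinear form on a rank-$2$ torsion-free module to vanish), and the closing remark relating \eqref{eq:ell-relation-dets} to \eqref{eq:ell-relation} via Corollary~\ref{corollary:l-independent} is exactly how the paper uses the proposition afterwards.
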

\begin{proof}
This is standard linear algebra.
\end{proof}

With appropriate choice of the volume form, relations~\eqref{eq:ell-relation} almost coincide with relations~\eqref{eq:ell-relation-dets}.

\begin{proposition}\label{proposition:r-a-det}
There is a choice of a volume form $\det$ on $N$ and a positive integer $h \in \ZZ$ such that
\begin{equation}\label{eq:r-dets}
\det(\ell_{i-1,i},\ell_{i,i+1}) = hr_i^2,
\quad\text{and}\quad 
\det(\ell_{i+1,i+2},\ell_{i-1,i}) = ha_{i,i+1}
\quad\text{for all $i \in \ZZ$.}
\end{equation}
With this choice we have $\det(\ell_{i-1,i},\ell_{i,i+1}) > 0$ for all $i \in \ZZ$.
\end{proposition}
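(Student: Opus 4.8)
The plan is to compare, for each $i$, the two linear relations we already have among the three vectors $\ell_{i-1,i}$, $\ell_{i,i+1}$, $\ell_{i+1,i+2}$: the arithmetic relation~\eqref{eq:ell-relation} from Proposition~\ref{proposition:ell-relation} and the Cramer-type relation~\eqref{eq:ell-relation-dets} from Proposition~\ref{proposition:ell-relation-det}. By Corollary~\ref{corollary:l-independent} the vectors $\ell_{i-1,i}$ and $\ell_{i,i+1}$ are linearly independent, hence span $N_\QQ$ (of rank $2$), so the $\QQ$-space of linear relations among $\ell_{i-1,i},\ell_{i,i+1},\ell_{i+1,i+2}$ is one-dimensional. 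Both relations in question are nontrivial: in~\eqref{eq:ell-relation} the coefficient $r_i^2$ of $\ell_{i+1,i+2}$ is nonzero since $r_i\ne 0$ (Definition~\ref{def:toric}(1)), and in~\eqref{eq:ell-relation-dets} the coefficient $\det(\ell_{i-1,i},\ell_{i,i+1})$ of $\ell_{i+1,i+2}$ is nonzero, again by Corollary~\ref{corollary:l-independent}. Therefore the two coefficient triples are proportional, and reading off the proportionality constant from the coefficients of $\ell_{i+1,i+2}$ there is $h_i\in\QQ$ with
\[
\det(\ell_{i,i+1},\ell_{i+1,i+2}) = h_i r_{i+1}^2,\qquad \det(\ell_{i+1,i+2},\ell_{i-1,i}) = h_i a_{i,i+1},\qquad \det(\ell_{i-1,i},\ell_{i,i+1}) = h_i r_i^2,
\]
and $h_i\ne 0$.

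The next step is to show $h_i$ is independent of $i$. Comparing the first identity above for the index $i$ with the third for the index $i+1$ gives $h_i r_{i+1}^2 = \det(\ell_{i,i+1},\ell_{i+1,i+2}) = h_{i+1} r_{i+1}^2$, and cancelling $r_{i+1}^2\ne 0$ yields $h_i = h_{i+1}$; by induction this common value is a single nonzero rational $h$. With this $h$, both equalities of~\eqref{eq:r-dets} hold for whichever volume form we have fixed, so it remains only to arrange that $h$ is a positive integer.

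For integrality, note $\det(\ell_{i-1,i},\ell_{i,i+1})\in\ZZ$ for all $i$ because $\det$ is integral on $N$ and the $\ell$'s lie in $N$; since $\gcd(r_1,\dots,r_n)=1$ by Definition~\ref{def:toric}(6) we also have $\gcd(r_1^2,\dots,r_n^2)=1$, so choosing integers $m_i$ with $\sum_i m_i r_i^2 = 1$ gives $h = \sum_i m_i(h r_i^2) = \sum_i m_i\det(\ell_{i-1,i},\ell_{i,i+1})\in\ZZ$. For positivity, the lattice $N$ has exactly the two volume forms $\det$ and $-\det$, and replacing $\det$ by $-\det$ replaces $h$ by $-h$; so we pick the volume form for which $h>0$, and then $\det(\ell_{i-1,i},\ell_{i,i+1}) = h r_i^2 > 0$ for all $i$. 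I do not anticipate a genuine obstacle here: once Propositions~\ref{proposition:ell-relation} and~\ref{proposition:ell-relation-det} are in hand the argument is essentially forced, and the only place where the full strength of the toric-system axioms (rather than mere bookkeeping) enters is the integrality of $h$, which rests on $\gcd(r_i)=1$.
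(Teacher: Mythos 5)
Your proposal is correct and follows essentially the same route as the paper: compare the two nontrivial relations~\eqref{eq:ell-relation} and~\eqref{eq:ell-relation-dets} in the one-dimensional relation space guaranteed by Corollary~\ref{corollary:l-independent}, deduce local proportionality constants $h_i$, show $h_i=h_{i+1}$ by matching the coefficient at $\ell_{i,i+1}\wedge\ell_{i+1,i+2}$, and use $\gcd(r_i)=1$ for integrality and the sign of the volume form for positivity. Your explicit B\'ezout argument for $h\in\ZZ$ just spells out what the paper leaves as a one-line remark.
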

\begin{proof}
Indeed, by Corollary~\ref{corollary:l-independent} the space of relations between $\ell_{i-1,i}$, $\ell_{i,i+1}$, $\ell_{i+1,i+2}$ is one-dimensional.
Since both relations~\eqref{eq:ell-relation} and~\eqref{eq:ell-relation-dets} are nontrivial (the first because $r_i \ne 0$ and the second because $\det(\ell_{i-1,i},\ell_{i,i+1}) \ne 0$), 
they are proportional, hence for every $i \in \ZZ$ there is unique $h_i \in \QQ \setminus 0$ such that
\begin{equation*}
\det(\ell_{i-1,i},\ell_{i,i+1}) = h_ir_i^2,
\qquad 
\det(\ell_{i,i+1},\ell_{i+1,i+2}) = h_ir_{i+1}^2,
\quad\text{and}\quad 
\det(\ell_{i+1,i+2},\ell_{i-1,i}) = h_ia_{i,i+1}.
\end{equation*}
Comparing the relations for $i$ and $i+1$, we see that $h_i = h_{i+1}$. 
Hence $h_i = h$ for one non-zero rational number $h$, so that~\eqref{eq:r-dets} holds.
Furthermore, since all $r_i$ are mutually coprime (by Definition~\ref{def:toric}(6)), it follows that $h$ is a non-zero integer.
Finally, changing the volume form on $N$ if necessary, we can assume that $h > 0$.
\end{proof}

\begin{remark}
In fact, Perling claims that $h = 1$ (see~\cite[Proposition~8.2]{Pe}), however, his proof of this fact is unclear. 
On the other hand, this is not necessary for the proof of the main result.
\end{remark}

Now let us deduce some consequences about the geometry of vectors $\ell_{i,i+1}$ on the plane $N_\RR \cong \RR^2$.
Consider the polygon defined as the convex hull of the vectors $\ell_{i,i+1}$:
\begin{equation*}
\bP := \Conv(\ell_{1,2},\ell_{2,3},\dots,\ell_{n,n+1}) \subset N_\RR.
\end{equation*}

\begin{lemma}\label{lemma:polygon-zero}
The point $0 \in N$ is contained in the interior of the polygon $\bP$.
\end{lemma}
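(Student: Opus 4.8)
The statement to prove is that $0 \in N$ lies in the interior of $\bP := \Conv(\ell_{1,2},\dots,\ell_{n,n+1})$. The plan is to argue by contradiction: if $0$ is not in the interior of $\bP$, then there is a nonzero linear functional $m \in M$ that is $\ge 0$ on all the vertices $\ell_{i,i+1}$, i.e. a supporting (or separating) hyperplane through the origin. I would then extract a contradiction from the basic structural facts we have: the tensor relation~\eqref{eq:ell-relation-tensor} together with Lemma~\ref{lemma:chains-independent} (linear independence of consecutive $\lambda$'s), the three-term relations~\eqref{eq:ell-relation} with all $r_i \ne 0$ and $h > 0$ from Proposition~\ref{proposition:r-a-det}, and Corollary~\ref{corollary:l-independent} (consecutive $\ell$'s are linearly independent).

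\textbf{Key steps.} First, note that if $0$ were a \emph{vertex} or lay \emph{outside} $\bP$, there would be a nonzero $m \in M$ with $m(\ell_{i,i+1}) > 0$ for all $i$; then evaluating~\eqref{eq:ell-relation-tensor} against $m$ gives $\sum_i m(\ell_{i,i+1})\,\lambda_{i,i+1} = 0$, a nontrivial relation with \emph{strictly positive} coefficients among the $\lambda_{i,i+1}$. This is already problematic: pairing with $\lambda_{i+1,i+2}$ and using Definition~\ref{def:toric}(1),(2) shows $m(\ell_{i-1,i})/r_{i-1}^2\ldots$ — more simply, I would use the relation~\eqref{eq:ell-relation}, $r_{i+1}^2\ell_{i-1,i} + a_{i,i+1}\ell_{i,i+1} + r_i^2\ell_{i+1,i+2} = 0$, and apply $m$: since $r_{i+1}^2, r_i^2 > 0$ and $m(\ell_{i-1,i}), m(\ell_{i+1,i+2}) > 0$, we get $a_{i,i+1}\,m(\ell_{i,i+1}) < 0$, so $a_{i,i+1} < 0$ for \emph{every} $i$; thus $\lambda_{i,i+1}^2 < 0$ for all $i$. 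But then $\sum_i m(\ell_{i,i+1})\lambda_{i,i+1} = 0$ with positive coefficients, and pairing this sum with itself, or better pairing $\sum_{i} c_i \lambda_{i,i+1} = 0$ (with $c_i := m(\ell_{i,i+1}) > 0$) against $\lambda_{j,j+1}$, yields $c_{j-1}\lambda_{j-1,j}\cdot\lambda_{j,j+1} + c_j\lambda_{j,j+1}^2 + c_{j+1}\lambda_{j,j+1}\cdot\lambda_{j+1,j+2} = 0$ by the tridiagonal structure~\eqref{eq:matrix-lambda}; using $\lambda_{j-1,j}\cdot\lambda_{j,j+1} = 1/r_j^2 > 0$ (and similarly for the other off-diagonal term) and $\lambda_{j,j+1}^2 < 0$, one checks the signs are incompatible when summed over $j$ with the $c_i$'s — concretely, $\sum_j c_j(\,c_{j-1}/r_j^2 + c_j\lambda_{j,j+1}^2 + c_{j+1}/r_{j+1}^2) = 0$ forces $\sum_j c_j^2\lambda_{j,j+1}^2 = -\sum_j c_jc_{j-1}/r_j^2 - \sum_j c_jc_{j+1}/r_{j+1}^2 < 0$, which is consistent, so I should instead argue directly from $a_{i,i+1}<0$ for all $i$ violating the signature hypothesis on $\rL$ (a lattice of signature $(1,n-3)$ cannot have all $n$ cyclically consecutive generators of a spanning configuration with negative self-intersection while summing to $-K$ with $K^2$ computable) — or, cleaner, invoke that $\sum_i \lambda_{i,i+1}\cdot\lambda_{i+1,i+2}$-type positivity together with the index form gives a positive-definite or degenerate contradiction.

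\textbf{Cleaner route and the main obstacle.} Rather than the sign bookkeeping above, I expect the intended argument is: suppose $0 \notin \mathrm{int}(\bP)$, so some $m\in M\setminus 0$ has $m(\ell_{i,i+1}) \ge 0$ for all $i$, with at least two consecutive ones — say $m(\ell_{i_0-1,i_0})$ and $m(\ell_{i_0,i_0+1})$ — \emph{not both zero}; if $m(\ell_{i,i+1}) = 0$ for all $i$ we contradict that the $\ell_{i,i+1}$ generate $N$. Now use~\eqref{eq:ell-relation}: whenever $m(\ell_{i-1,i}) = m(\ell_{i+1,i+2}) = 0$ but one of the middle terms is positive we'd need $a_{i,i+1} = 0$; and whenever two consecutive values vanish, say $m(\ell_{i-1,i}) = m(\ell_{i,i+1}) = 0$, applying $m$ to~\eqref{eq:ell-relation} gives $r_i^2 m(\ell_{i+1,i+2}) = 0$, so $m(\ell_{i+1,i+2}) = 0$, and inductively $m$ vanishes on all $\ell_{j,j+1}$, contradiction. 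Hence the set where $m$ vanishes contains no two cyclically consecutive indices; combined with $m \ge 0$ everywhere and nontriviality, one derives (via the tensor relation~\eqref{eq:ell-relation-tensor} and Lemma~\ref{lemma:chains-independent}) a nontrivial nonnegative relation among at least $n-1$ of the $\lambda_{i,i+1}$, contradicting their linear independence in chains of length $n-2$. The hard part — the main obstacle — is handling the boundary case where $0$ lies on $\partial\bP$ but is not a vertex (so $m \ge 0$ with equality on a face): there the functional $m$ vanishes on two or more of the $\ell_{i,i+1}$ that may be cyclically adjacent or not, and I will need the propagation argument from~\eqref{eq:ell-relation} above to show $m \equiv 0$ on all of $N$, the desired contradiction; assembling that cleanly, together with ruling out $0$ being a vertex, is where the real work lies, but it is purely a consequence of~\eqref{eq:ell-relation-tensor}, Lemma~\ref{lemma:chains-independent}, and Proposition~\ref{proposition:ell-relation}.
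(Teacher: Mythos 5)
Your reduction (if $0$ is not interior to $\bP$ there is $0 \ne m \in M$ with $m(\ell_{i,i+1}) \ge 0$ for all $i$) and your propagation step (if $m$ kills two cyclically consecutive $\ell$'s then, by~\eqref{eq:ell-relation} and $r_i \ne 0$, it kills all of them, contradicting that the $\ell_{i,i+1}$ generate $N$) are both fine. But the main case — $m \ge 0$ with no two consecutive zeros, in particular $m > 0$ everywhere — is not closed, and both of your attempts at it fail. In the first route you correctly derive $a_{i,i+1} < 0$ for all $i$ and then concede the sign bookkeeping "is consistent"; it really is consistent, and no contradiction can be extracted from the signs of the $a_{i,i+1}$ alone, because all $a_{i,i+1}$ can genuinely be negative (e.g.\ the toric system of the standard exceptional collection on the degree-$6$ del Pezzo surface has all $\lambda_{i,i+1}^2 = -1$, yet the lemma holds there). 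The appeal to "violating the signature hypothesis" is therefore not substantiable. In the second route, the concluding contradiction is also broken: with no two consecutive zeros the number of strictly positive coefficients in $\sum_i m(\ell_{i,i+1})\lambda_{i,i+1} = 0$ can be as small as $\lceil n/2\rceil$, not $n-1$; and even a relation among $n-1$ of the $\lambda$'s is not excluded by Lemma~\ref{lemma:chains-independent}, which only covers \emph{consecutive} chains of length at most $n-2$ — the $n$ vectors $\lambda_{i,i+1}$ live in a group of rank $n-2$ and necessarily admit nontrivial relations.

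The missing idea is to use the \emph{orientation} information you list but never exploit: Proposition~\ref{proposition:r-a-det} gives $\det(\ell_{i-1,i},\ell_{i,i+1}) = hr_i^2 > 0$ for every $i$, so each $\ell_{i,i+1}$ is obtained from $\ell_{i-1,i}$ by a counterclockwise turn through an angle in $(0,\pi)$. If all the $\ell_{i,i+1}$ lay in a closed half-plane $\{m \ge 0\}$, their arguments would be confined to an interval of length $\pi$ and would have to be strictly increasing along the sequence, which is impossible for a periodic sequence ($\ell_{i+n,i+n+1} = \ell_{i,i+1}$). This winding argument is the paper's proof, and it is what handles exactly the case your proposal leaves open.
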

\begin{proof}
Indeed, otherwise all $\ell_{i,i+1}$ are contained in a closed half-plane of $N_\RR$.
On the other hand, by Proposition~\ref{proposition:r-a-det} we have $\det(\ell_{i-1,i},\ell_{i,i+1}) > 0$ for all $i \in \ZZ$, hence the oriented angle between the vectors $\ell_{i-1,i}$ and $\ell_{i,i+1}$ is contained in the interval $(0,\pi)$.
Evidently, a periodic sequence of vectors with this property cannot be contained in a half-plane.
This contradiction proves the lemma.
\end{proof}

We say that a vector $\ell_{i,i+1}$ is {\sf extremal} if it is a vertex of the polygon $\bP$.
In other words, if it does not lie in the convex hull of the other vectors.

\begin{corollary}\label{corollary:three-extremal}
There are at least three extremal vectors among $\ell_{i,i+1}$.
\end{corollary}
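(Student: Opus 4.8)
The plan is to argue by contradiction: suppose at most two of the vectors $\ell_{i,i+1}$ are extremal, i.e. are vertices of the polygon $\bP = \Conv(\ell_{1,2},\dots,\ell_{n,n+1})$. First I would observe that the polygon $\bP$ is a genuine $2$-dimensional polygon whose interior contains $0$ by Lemma~\ref{lemma:polygon-zero}; in particular $\bP$ has at least three vertices, and every vertex of $\bP$ must be one of the points $\ell_{i,i+1}$ (a convex hull of finitely many points has its vertex set a subset of those points). So if there were at most two extremal vectors among the $\ell_{i,i+1}$, the polygon $\bP$ would have at most two vertices — impossible for a $2$-dimensional convex polygon. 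This essentially finishes it, but the subtle point is to make sure $\bP$ really is $2$-dimensional and not a segment or a point.

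For that, the key input is Proposition~\ref{proposition:r-a-det} (equivalently Corollary~\ref{corollary:l-independent}): since $\det(\ell_{i-1,i},\ell_{i,i+1}) = hr_i^2 > 0$ for every $i$, consecutive vectors $\ell_{i-1,i}$ and $\ell_{i,i+1}$ are linearly independent, so the points $\ell_{i,i+1}$ do not all lie on a single line through $0$; combined with Lemma~\ref{lemma:polygon-zero}, which places $0$ strictly inside $\bP$, we conclude $\bP$ has nonempty interior, hence is a bona fide $2$-dimensional polygon and therefore has at least three vertices. Each vertex is one of the $\ell_{i,i+1}$ and, being a vertex, does not lie in the convex hull of the remaining points; that is exactly the definition of an extremal vector. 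Hence there are at least three extremal vectors.

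I do not anticipate a serious obstacle here — the statement is essentially ``a $2$-dimensional convex polygon has $\geq 3$ vertices'' together with the already-established facts that $0$ is interior to $\bP$ and that consecutive $\ell$'s are independent. The only care needed is the bookkeeping that a vertex of $\Conv$ of a finite set must be a member of that set and cannot be written as a nontrivial convex combination of the others, which is standard convex geometry; one could also phrase it directly via a supporting-line argument using the positivity of the determinants in Proposition~\ref{proposition:r-a-det}, picking for instance the $\ell_{i,i+1}$ extremal in three generic directions. I would present the contradiction version as it is shortest.
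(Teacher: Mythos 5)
Your argument is correct and is essentially the paper's own proof: Lemma~\ref{lemma:polygon-zero} gives that $0$ lies in the interior of $\bP$, so $\bP$ is two-dimensional and hence has at least three vertices, each of which is an extremal $\ell_{i,i+1}$. The extra care you take about $\bP$ possibly degenerating to a segment is already subsumed by the interior statement of that lemma, so nothing further is needed.
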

\begin{proof}
Indeed, the polygon $\bP$ has a nonempty interior by Lemma~\ref{lemma:polygon-zero}, hence it has at least three vertices.
\end{proof}

\subsection{Fan of a norm-minimal basis}

Throughout this section we assume $\lambda_{\bullet,\bullet}$ is the toric system of a norm-minimal exceptional basis $\ce_\bullet$ in a surface-like pseudolattice $\GG$,
i.e., the sum $\sum_{i=1}^n r_i^2$ of the squares of the ranks of the basis vectors is minimal possible among all mutations of the basis.

\begin{lemma}\label{lemma:a-r}
Assume the basis is norm-minimal.
Then for every $i \in \ZZ$ we have
\begin{itemize}
\item 
if $a_{i,i+1} \ge 0$ then $a_{i,i+1} \ge |r_i^2 - r_{i+1}^2|$, and
\item 
if $a_{i,i+1} < 0$ then $a_{i,i+1} \le -(r_i^2 + r_{i+1}^2)$.
\end{itemize}
\end{lemma}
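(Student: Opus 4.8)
The plan is to extract two numerical inequalities from norm-minimality by inspecting the two single mutations at the edge $(\ce_i,\ce_{i+1})$, and then to finish with a short manipulation of integers. First I would record the dictionary from Proposition~\ref{proposition:lambda-toric}: one has $r_j=\br(\ce_j)$ for every $j$ and $n_{i,i+1}=\chi(\ce_i,\ce_{i+1})$, and solving the defining relation of Definition~\ref{def:toric}(4) for $a_{i,i+1}$ gives $a_{i,i+1}=n_{i,i+1}r_ir_{i+1}-r_i^2-r_{i+1}^2$. To keep the formulas light write $r=r_i$, $s=r_{i+1}$, $n=n_{i,i+1}$ and $a=a_{i,i+1}=nrs-r^2-s^2$; recall $r,s\neq 0$ since $\ce_\bullet$ carries a toric system.

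Next I would invoke norm-minimality. The left mutation $\LL_{i,i+1}$ replaces the pair $(\ce_i,\ce_{i+1})$ by $(\LL_{\ce_i}(\ce_{i+1}),\ce_i)$, and since $\br(\LL_{\ce_i}(\ce_{i+1}))=s-nr$ it changes the norm~\eqref{eq:norm} by $(s-nr)^2-s^2$; similarly $\RR_{i,i+1}$ replaces the pair by $(\ce_{i+1},\RR_{\ce_{i+1}}(\ce_i))$ with $\br(\RR_{\ce_{i+1}}(\ce_i))=r-ns$, changing the norm by $(r-ns)^2-r^2$. For a general index $i\in\ZZ$ one applies these mutations to the appropriate cyclic window of the helix generated by $\ce_\bullet$; every such window is itself a norm-minimal exceptional basis, being mutation-equivalent to $\ce_\bullet$ and of the same norm. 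Since the norm cannot decrease, this yields
\[
(s-nr)^2\ge s^2\qquad\text{and}\qquad (r-ns)^2\ge r^2,
\]
that is, $n^2r^2\ge 2nrs$ and $n^2s^2\ge 2nrs$.

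The endgame is then elementary. If $nrs\le 0$ then $a=nrs-r^2-s^2\le-(r^2+s^2)<0$, which is the second alternative. If $nrs>0$, I may assume $r^2\ge s^2$, since both the assertion to be proved and the pair of inequalities above are symmetric under interchanging $\ce_i$ and $\ce_{i+1}$; multiplying $n^2s^2\ge 2nrs$ by $r^2$ gives $(nrs)^2\ge 2r^2\cdot nrs$, and dividing by $nrs>0$ yields $nrs\ge 2r^2$, whence
\[
a=nrs-r^2-s^2\ \ge\ 2r^2-r^2-s^2\ =\ r^2-s^2\ =\ |r^2-s^2|\ \ge 0,
\]
which is the first alternative. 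Combining: when $a\ge 0$ the case $nrs\le 0$ is excluded, so $a\ge|r_i^2-r_{i+1}^2|$; and when $a<0$ the case $nrs>0$ is excluded, so $a\le-(r_i^2+r_{i+1}^2)$.

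There is no real obstacle here: the computation is a couple of lines once the two mutation inequalities are in hand. The only points demanding a moment's care are the passage to a cyclic window of the helix (so that single mutations are available at every integer index) and keeping track of which of the two mutation inequalities to apply — which is precisely what the normalization $r^2\ge s^2$ settles.
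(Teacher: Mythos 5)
Your proof is correct and takes essentially the same route as the paper: both extract from norm-minimality the two inequalities coming from the left and right elementary mutations at the edge $(\ce_i,\ce_{i+1})$ (you write them as $(s-nr)^2\ge s^2$ and $(r-ns)^2\ge r^2$, the paper equivalently as $|a_{i,i+1}+r_i^2|\ge r_{i+1}^2$ and $|a_{i,i+1}+r_{i+1}^2|\ge r_i^2$) and then finish by an elementary case analysis. Your explicit treatment of the helix window for general $i\in\ZZ$ and the worked-out endgame merely fill in details the paper leaves to the reader.
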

\begin{proof}
We have $\chi(\ce_i,\ce_{i+1}) = n_{i,i+1}$ (see the proof of Proposition~\ref{proposition:lambda-toric}), hence by~\eqref{eq:mutations} the rank of the left mutation $\LL_{\ce_i}(\ce_{i+1})$ of $\ce_{i+1}$ through $\ce_i$ equals
\begin{equation*}
|r'| = |n_{i,i+1}r_i - r_{i+1}| = \frac{|a_{i,i+1}+r_i^2|}{|r_{i+1}|}.
\end{equation*}
Norm-minimality implies $|r'| \ge |r_{i+1}|$, i.e.
\begin{equation*}
|a_{i,i+1} + r_i^2| \ge r_{i+1}^2.
\end{equation*}
Considering analogously the right mutation $\RR_{\ce_{i+1}}(\ce_{i})$, we deduce 
\begin{equation*}
|a_{i,i+1} + r_{i+1}^2| \ge r_i^2.
\end{equation*}
Analyzing the cases of nonnegative and negative $a_{i,i+1}$, we easily deduce the required inequalities.
\end{proof}

Note that in the proof we only use a local norm-minimality of the basis, 
i.e., that the norm of the basis does not decrease under elementary mutations only.

\begin{lemma}\label{lemma:negative-not-extremal}
Assume the basis is norm-minimal. 
If $a_{i,i+1} < 0$ then $\ell_{i,i+1} \in \Conv(0,\ell_{i-1,i},\ell_{i+1,i+2})$.
In particular, $\ell_{i,i+1}$ is not extremal.
\end{lemma}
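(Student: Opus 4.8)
The plan is to read the containment off directly from the linear relation among the $\ell$'s combined with the rank estimate of Lemma~\ref{lemma:a-r}. First I would take the relation of Proposition~\ref{proposition:ell-relation},
\[
r_{i+1}^2\ell_{i-1,i} + a_{i,i+1}\ell_{i,i+1} + r_i^2\ell_{i+1,i+2} = 0,
\]
and, since $a_{i,i+1}<0$, solve it for $\ell_{i,i+1}$:
\[
\ell_{i,i+1} = \frac{r_{i+1}^2}{-a_{i,i+1}}\,\ell_{i-1,i} + \frac{r_i^2}{-a_{i,i+1}}\,\ell_{i+1,i+2}.
\]
Both coefficients $\alpha:=r_{i+1}^2/(-a_{i,i+1})$ and $\beta:=r_i^2/(-a_{i,i+1})$ are strictly positive, because the ranks $r_i=\br(\ce_i)$ are nonzero by hypothesis.

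The key point is that $\alpha+\beta = (r_i^2+r_{i+1}^2)/(-a_{i,i+1}) \le 1$: this is exactly the inequality $a_{i,i+1}\le -(r_i^2+r_{i+1}^2)$ supplied by Lemma~\ref{lemma:a-r} in the case $a_{i,i+1}<0$. Hence, setting $\gamma:=1-\alpha-\beta\ge 0$, one has $\ell_{i,i+1}=\alpha\,\ell_{i-1,i}+\beta\,\ell_{i+1,i+2}+\gamma\cdot 0$ as a genuine convex combination, which is precisely the assertion $\ell_{i,i+1}\in\Conv(0,\ell_{i-1,i},\ell_{i+1,i+2})$.

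For the ``not extremal'' clause I would observe that the triangle $\Conv(0,\ell_{i-1,i},\ell_{i+1,i+2})$ is contained in $\bP$: the vectors $\ell_{i-1,i},\ell_{i+1,i+2}$ are among the points defining $\bP$, and $0\in\bP$ by Lemma~\ref{lemma:polygon-zero}, so convexity of $\bP$ does the rest. If $\ell_{i,i+1}$ were a vertex of $\bP$, then, being an extreme point of $\bP$ lying in the sub-polytope, it would be a vertex of that sub-triangle, hence equal to $0$, $\ell_{i-1,i}$, or $\ell_{i+1,i+2}$; but $\ell_{i,i+1}$ is linearly independent from each of $\ell_{i-1,i}$ and $\ell_{i+1,i+2}$ by Corollary~\ref{corollary:l-independent} (in particular nonzero), so it equals none of the three --- a contradiction. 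I do not anticipate a genuine obstacle; the only mildly delicate step is this last piece of convexity bookkeeping (an extreme point of $\bP$ that lies in a subpolytope is an extreme point of that subpolytope), which is elementary.
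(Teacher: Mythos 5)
Your proof is correct and follows essentially the same route as the paper's: solve the three-term relation of Proposition~\ref{proposition:ell-relation} for $\ell_{i,i+1}$, use Lemma~\ref{lemma:a-r} to see that the two (positive) coefficients sum to at most $1$, and rule out extremality by noting that a vertex of $\bP$ lying in the sub-triangle would have to be one of its vertices, which Corollary~\ref{corollary:l-independent} forbids. (Your placement of $r_i^2$ versus $r_{i+1}^2$ in the two coefficients is the one consistent with the relation as stated; the paper's displayed formula swaps them, but only the sum of the coefficients is used, so nothing changes.)
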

\begin{proof}
If $a_{i,i+1} < 0$ then the relation~\eqref{eq:ell-relation} can be rewritten as
\begin{equation*}
\ell_{i,i+1} = \frac{r_i^2}{|a_{i,i+1}|}\ell_{i-1,i} + \frac{r_{i+1}^2}{|a_{i,i+1}|}\ell_{i+1,i+2}
\end{equation*}
By Lemma~\ref{lemma:a-r} we have $|a_{i,i+1}| \ge r_i^2 + r_{i+1}^2$, hence the coefficients in the right hand side are nonnegative and their sum does not exceed 1, hence the claim about the convex hull.

Now, since $0$ is in the interior of $\bP$ (Lemma~\ref{lemma:polygon-zero}), we see that $\Conv(0,\ell_{i-1,i},\ell_{i+1,i+2}) \subset \bP$, so the only possibility for~$\ell_{i,i+1}$ to be extremal is 
if it coincides with one of the vertices of the triangle $\Conv(0,\ell_{i-1,i},\ell_{i+1,i+2})$. But this is impossible by Corollary~\ref{corollary:l-independent}.
\end{proof}

Combining the last two lemmas we see that there are at least three indices $i$ such that $a_{i,i+1} \ge 0$.
This already gives the required restriction on $n$.

\begin{corollary}\label{corollary:minimal-n}
If a norm-minimal exceptional basis in a geometric surface-like pseudolattice $\GG$
consists of elements of non-zero rank, then $n = 3$ or $n = 4$.
\end{corollary}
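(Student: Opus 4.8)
The plan is to finish the argument that the text has set up. We already know --- combining Corollary~\ref{corollary:three-extremal} with the contrapositive of Lemma~\ref{lemma:negative-not-extremal} --- that there are at least three cyclically distinct indices $i$ with $a_{i,i+1}\ge 0$. What remains is to turn this into the bound $n\le 4$ by feeding these indices into Lemma~\ref{lemma:extremal-pair} and invoking the combinatorics of $\ZZ/n\ZZ$; together with the already-noted inequality $n\ge 3$ (forced by the signature $(1,n-3)$ of $\NS(\GG)$), this yields $n\in\{3,4\}$.

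Concretely: since $\GG$ carries an exceptional basis of nonzero ranks, Proposition~\ref{proposition:lambda-toric} attaches to it a toric system $\lambda_{\bullet,\bullet}$ in $(\NS(\GG),K_\GG)$ with $r_i=\br(\ce_i)$, and geometricity makes $\NS(\GG)$ a lattice of signature $(1,n-3)$, so Lemma~\ref{lemma:extremal-pair} applies. First I would choose three distinct vertices of the polygon $\bP$; each vertex is of the form $\ell_{i,i+1}$, so this selects three cyclically distinct indices $i_1,i_2,i_3$ (cyclically distinct because they are distinct points of $N_\RR$) with each $\ell_{i_k,i_k+1}$ a vertex of $\bP$. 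By Lemma~\ref{lemma:negative-not-extremal} (used in the form: a vertex of $\bP$ cannot have negative $a$) we get $a_{i_k,i_k+1}\ge 0$ for $k=1,2,3$.

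Now assume for contradiction that $n\ge 5$. Applying Lemma~\ref{lemma:extremal-pair} to each of the three pairs drawn from $\{i_1,i_2,i_3\}$: its second alternative forces $n=4$ and is therefore excluded, so every such pair is adjacent, i.e.\ the $i_k$ are pairwise adjacent in $\ZZ/n\ZZ$. But for $n\ge 5$ this is impossible: after normalizing $i_2=i_1+1$, adjacency of $i_3$ to both $i_1$ and $i_1+1$ would force $i_3\in\{i_1-1,i_1+1\}\cap\{i_1,i_1+2\}=\varnothing$, since $i_1-1,i_1,i_1+1,i_1+2$ are pairwise distinct modulo $n$. Hence $n\le 4$, and combined with $n\ge 3$ we conclude $n=3$ or $n=4$.

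The substantive ingredients --- that $0$ lies in the interior of $\bP$ (Lemma~\ref{lemma:polygon-zero}), the norm-minimality inequalities of Lemma~\ref{lemma:a-r} feeding Lemma~\ref{lemma:negative-not-extremal}, and the Hodge-index-style argument behind Lemma~\ref{lemma:extremal-pair} --- are all already in place, so the only real care needed here is bookkeeping: checking that three distinct vertices of $\bP$ really do yield cyclically distinct indices (so that Lemma~\ref{lemma:extremal-pair} genuinely applies to each pair), and the elementary observation that $\ZZ/n\ZZ$ has no three pairwise-adjacent residues once $n\ge5$. In that sense I do not expect a serious obstacle: this corollary is essentially an assembly step drawing on the preceding subsection.
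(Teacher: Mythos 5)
Your proof is correct and follows essentially the same route as the paper: both arguments combine the existence of at least three extremal vectors (Corollary~\ref{corollary:three-extremal}), the fact that extremal vectors have nonnegative $a_{i,i+1}$ (Lemma~\ref{lemma:negative-not-extremal}), and the dichotomy of Lemma~\ref{lemma:extremal-pair} to force $n\le 4$. The only difference is presentational — the paper splits into the cases ``some extremal pair is non-adjacent'' (giving $n=4$) versus ``all extremal indices pairwise adjacent'' (giving $n=3$), whereas you run the contrapositive against $n\ge 5$ — and your extra bookkeeping about cyclically distinct indices is sound.
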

\begin{proof}
Each vertex of the polygon $\bP$ corresponds to an extremal vector~$\ell_{i,i+1}$, hence the corresponding integer $a_{i,i+1}$ is nonnegative by Lemma~\ref{lemma:negative-not-extremal}.
Thus, if there is a pair of non-adjacent $i$ and $j$ such that the vectors $\ell_{i,i+1}$ and $\ell_{j,j+1}$ are extremal, then $n = 4$ by Lemma~\ref{lemma:extremal-pair}.
On the other hand, if all $i$ such that $\ell_{i,i+1}$ is extremal are pairwise adjacent, then clearly $n = 3$.
\end{proof}

\subsection{Norm-minimal bases for $n = 3$ and $n = 4$}

It remains to consider two cases.
As before we assume that~$\ce_\bullet$ is a norm-minimal exceptional basis of a geometric pseudolattice $\GG$ of rank $n$ 
consisting of elements of non-zero rank and $\lambda_{\bullet,\bullet}$ is its toric system constructed in Proposition~\ref{proposition:lambda-toric}.

\begin{lemma}\label{lemma:n=3}
If $n = 3$, then $r_i = \pm 1$, $a_{i,i+1} = 1$, and $K_\GG^2 = 9$.
\end{lemma}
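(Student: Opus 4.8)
The plan is to argue entirely at the level of the toric system $\lambda_{\bullet,\bullet}$ in the rank-one lattice $\NS(\GG)$ (recall $\rk\NS(\GG) = n-2 = 1$) attached to $\ce_\bullet$ by Proposition~\ref{proposition:lambda-toric}, for which $r_i = \br(\ce_i)$. Writing $\lambda_{i,i+1} = \mu_i e$ for a generator $e$ of $\NS(\GG)\otimes\QQ$ and solving the three equations of Definition~\ref{def:toric}(1) (condition~(2) is vacuous when $n=3$) yields $\mu_i^2 = r_{i-1}^2/(r_i^2 r_{i+1}^2)$, hence
\[
a_{1,2} = r_3^2, \qquad a_{2,3} = r_1^2, \qquad a_{3,4} = r_2^2 .
\]
In particular all $a_{i,i+1}\ge 0$, so Lemma~\ref{lemma:a-r} applies in its first form and gives $r_{i-1}^2 \ge |r_i^2 - r_{i+1}^2|$ for all $i$; taken together the three inequalities say exactly that $r_1^2, r_2^2, r_3^2$ obey the triangle inequalities, so the largest of the three is at most the sum of the other two.

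Next I would use the integrality of the $n_{i,i+1}$ (Definition~\ref{def:toric}(4)): substituting $a_{i,i+1} = r_{i-1}^2$ gives $n_{i,i+1} = S/(r_i r_{i+1})$, where $S := \|\ce_\bullet\| = r_1^2+r_2^2+r_3^2$, so $r_i r_{i+1} \mid S$ for every cyclic pair. Combined with $\gcd(r_1,r_2,r_3)=1$ (Definition~\ref{def:toric}(6)) this forces the $r_i$ to be pairwise coprime --- a prime dividing $r_i$ and $r_{i+1}$ would divide $S - r_i^2 - r_{i+1}^2 = r_{i-1}^2$, hence all three --- and therefore $r_1 r_2 r_3 \mid S$, i.e. $S = m\, r_1 r_2 r_3$ with $m \in \ZZ_{>0}$. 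Everything thus reduces to the arithmetic problem: find pairwise coprime integers $1 \le |r_1| \le |r_2| \le |r_3|$ with $r_1^2+r_2^2 \ge r_3^2$ and $r_1^2+r_2^2+r_3^2 = m\, r_1 r_2 r_3$. The triangle inequality forces $r_3^2 \le 2 r_2^2$, so $S \le 4 r_2^2 \le 4|r_2||r_3|$ and hence $m|r_1| \le 4$; and $S > r_2^2+r_3^2 \ge 2|r_2||r_3|$ gives $m|r_1| \ge 3$. So $(m,|r_1|)$ lies in the short list $(3,1),(1,3),(4,1),(2,2),(1,4)$.

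The finite case-check is the only routine computation, and is where I expect the only real (if minor) friction; the trick that keeps it painless is to reduce the relation $r_1^2+r_2^2+r_3^2 = m\,r_1 r_2 r_3$ modulo a small prime power, using pairwise coprimality to pin down $r_2^2+r_3^2$. Four of the five cases die this way: when $3 \mid m|r_1|$ but $3\nmid r_2, r_3$ one has $r_2^2+r_3^2\equiv 2\pmod 3$ while $m r_1 r_2 r_3 \equiv 0$, and when $4 \mid m|r_1|$ one reduces mod $4$, noting $r_2,r_3$ are not both even so $r_2^2+r_3^2 \not\equiv 0 \pmod 4$. In the surviving case $(m,|r_1|)=(3,1)$ the triangle inequality $1+r_2^2 \ge r_3^2$ together with $|r_2|\le|r_3|$ forces $|r_2|=|r_3|$, and then $3r_2^2 = 1+2r_2^2$ gives $|r_2|=|r_3|=1$. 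Hence $r_i = \pm 1$ for all $i$, whence $a_{i,i+1} = r_{i-1}^2 = 1$. Finally, by the Gram matrix~\eqref{eq:matrix-lambda} every entry $\lambda_{i,i+1}\cdot\lambda_{j,j+1}$ is then equal to $1$, and since $K_\GG = -(\lambda_{1,2}+\lambda_{2,3}+\lambda_{3,4})$ by Definition~\ref{def:toric}(5), we conclude $K_\GG^2 = \sum_{i,j}\lambda_{i,i+1}\cdot\lambda_{j,j+1} = 9$.
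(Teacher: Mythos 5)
Your proof is correct, but the arithmetic core is handled quite differently from the paper. Both arguments begin the same way: in the rank-one lattice $\NS(\GG)$ one solves Definition~\ref{def:toric}(1) to express the toric system in terms of the ranks (your observation that $a_{i,i+1}=r_{i-1}^2$ independently of the normalization of the generator is a clean way to package this). From there the paper writes $-K_\GG = \frac{r_1^2+r_2^2+r_3^2}{r_1r_2r_3}H$, invokes integrality of $K_\GG$ to land on the Markov-type equation $r_1^2+r_2^2+r_3^2=\gamma r_1r_2r_3$, and then cites an external result (Aigner, \S2.1) that $\gamma=3$ is the only value admitting an indivisible solution, together with the fact that the norm-minimal Markov triple under the braid-group action is $(1,1,1)$. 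You instead derive the divisibility $S=m\,r_1r_2r_3$ from integrality of the $n_{i,i+1}$ (Definition~\ref{def:toric}(4)) plus pairwise coprimality, use local norm-minimality through Lemma~\ref{lemma:a-r} to get the triangle inequalities on $r_1^2,r_2^2,r_3^2$, and finish with the bound $3\le m|r_1|\le 4$ and a short congruence check mod $3$ and mod $4$. What your route buys is self-containedness: it replaces the citation of the uniqueness-of-$\gamma$ result and the implicit identification of basis mutations with Vieta involutions on Markov triples by a five-case elementary verification; what it costs is reliance on Lemma~\ref{lemma:a-r} (which the paper's proof of this lemma does not use) and it only establishes $K_\GG^2=9$ via the norm-minimal basis rather than directly from $\gamma=3$ --- harmless here, since $K_\GG^2$ is an invariant of $\GG$ and the lemma is stated under the standing norm-minimality hypothesis of this subsection. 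The one point worth making explicit is that geometricity (signature $(1,0)$) is what guarantees the generator of $\NS(\GG)\otimes\QQ$ has positive square, so that the $\mu_i^2$ you solve for are indeed positive; with that said, everything checks out.
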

\begin{proof}
Replacing $\ce_i$ by $-\ce_i$, we may assume that all the ranks are positive.
Let $H$ denote a generator of~$\NS(\GG)$,
by unimodularity (Lemma~\ref{lemma:chi-unimodular}) and geometricity of $\GG$, we have $H^2 = 1$. 
By definition of a toric system
$\lambda_{ij} = c_{ij}H$, $c_{ij} \in \QQ$. 
Accordingly, we have by~Definition~\ref{def:toric}(1)
\begin{equation*}
c_{12}c_{31} = \frac1{r_1^2},
\qquad 
c_{12}c_{23} = \frac1{r_2^2},
\qquad 
c_{31}c_{23} = \frac1{r_3^2}.
\end{equation*}
Solving this system for $c_{ij}$, we see that 
\begin{equation*}
c_{12} = \frac{r_3}{r_1r_2},\qquad
c_{31} = \frac{r_2}{r_3r_1},\qquad
c_{23} = \frac{r_1}{r_2r_3}
\end{equation*}
up to a common sign, which can be fixed by replacing $H$ with $-H$ if necessary.
By Definition~\ref{def:toric}(5)
\begin{equation*}
-K_\GG = \lambda_{12} + \lambda_{23} + \lambda_{31} = 
\left(\frac{r_3}{r_1r_2} + \frac{r_2}{r_3r_1} + \frac{r_1}{r_2r_3}\right)H = \frac{r_1^2 + r_2^2 + r_3^2}{r_1r_2r_3}H.
\end{equation*}
Since $\GG$ is unimodular, $K_\GG$ is integral by Lemma~\ref{lemma:canonical-class}, i.e., there is $\gamma \in \ZZ$ such that $-K_\GG = \gamma H$ and hence
\begin{equation}\label{eq:markov}
r_1^2  +r_2^2 + r_3^2 = \gamma r_1r_2r_3.
\end{equation}
Since all $r_i$ are positive, so is $\gamma$.
Moreover, $\gcd(r_1,r_2,r_3) = 1$ by Definition~\ref{def:toric}(6) and Proposition~\ref{proposition:lambda-toric}.
But the only positive integer~$\gamma$ for which the above equation has an integral indivisible solution is $\gamma = 3$ (\cite[\S2.1]{A}).
Therefore, we have $K_\GG^2 = \gamma^2 = 9$.

Furthermore, in case $\gamma = 3$, equation~\eqref{eq:markov} is the Markov equation, and its positive norm-minimal solution (with respect to the standard braid group action) is $r_1 = r_2 = r_3 = 1$.
We obtain $c_{i,i+1} = 1$ and $a_{i,i+1} = r_i^2r_{i+1}^2c_{i,i+1}^2 = 1$.
\end{proof}

Thus, the Neron--Severi lattice of a geometric surface-like pseudolattice with an exceptional basis of length $n = 3$ can be written as
\begin{equation}\label{eq:ns-3}
\NS(\GG) = \ZZ H,\qquad 
H^2 = 1,\qquad
K_\GG = -3H,
\end{equation}
the toric system of a norm-minimal exceptional collection in $\GG$ is $\lambda_{1,2} = \lambda_{2,3} = \lambda_{1,3} = H$.
Since $\chi(\ce_i,\ce_j) = n_{i,j} = (a_{i,j} + r_i^2  + r_j^2)/r_ir_j$ and $a_{i,j} = (r_ir_j\lambda_{i,j})^2$, we see that the Gram matrix of the form $\chi$ in the basis $\ce_i$ 
is equal to the form $\chi_{\PP^2}$ from Example~\ref{example:p2-p1p1}.

\begin{corollary}\label{corollary:minimal-3}
If a geometric surface-like pseudolattice $\GG$ of rank $n = 3$  has an exceptional basis, 
then~$\GG$ is isometric to~$\NGr(\bD(\PP^2))$, and its norm-minimal basis corresponds to the exceptional collection $(\cO,\cO(1),\cO(2))$ in $\bD(\PP^2)$.
\end{corollary}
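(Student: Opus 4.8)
The plan is to reduce the statement to the explicit computation carried out immediately before it; the only hypothesis of that computation which is not automatic here is that a norm-minimal exceptional basis of $\GG$ consists of elements of nonzero rank, and establishing this is really the whole content of the corollary.

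First I would observe that $\GG$ is minimal. Since $\GG$ carries an exceptional basis of length $3 = \rk\GG$, Lemma~\ref{lemma:chi-unimodular} shows it is unimodular, and by geometricity $\NS(\GG)$ has rank $n-2 = 1$ and signature $(1,0)$; hence $\NS(\GG)\otimes\QQ$ is positive definite and does not represent $-1$. By Lemma~\ref{lemma:minimality-criterion} (equivalently, directly from Lemma~\ref{lemma:ne-zero-rank}) this means $\GG$ has no exceptional element of zero rank, so \emph{every} exceptional basis of $\GG$ — in particular any norm-minimal one — consists of elements of nonzero rank. By unimodularity and geometricity one moreover has $\NS(\GG) = \ZZ H$ with $H^2 = 1$, which is the standing numerical data for the $n=3$ analysis.

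Next I would pass from the given exceptional basis to a norm-minimal exceptional basis $\ce_\bullet$ by a finite sequence of mutations, which is possible since mutations carry exceptional bases of $\GG$ to exceptional bases of $\GG$ and can only decrease the norm finitely often; this changes neither $\GG$ nor the statement to be proved. Now $\GG$ and $\ce_\bullet$ satisfy all the standing hypotheses of Lemma~\ref{lemma:n=3} and of the computation following it, which therefore apply verbatim: after the normalization $\ce_i \mapsto -\ce_i$ that makes all ranks equal to $+1$, the toric system of $\ce_\bullet$ is $\lambda_{1,2} = \lambda_{2,3} = \lambda_{1,3} = H$, $a_{i,i+1} = 1$, $K_\GG = -3H$, and the Gram matrix of $\chi$ in the basis $\ce_\bullet$ is exactly $\chi_{\PP^2}$ of Example~\ref{example:p2-p1p1}.

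Finally I would produce the isometry. By Example~\ref{example:p2-p1p1} the classes $([\cO_{\PP^2}],[\cO_{\PP^2}(1)],[\cO_{\PP^2}(2)])$ form an exceptional basis of $\NGr(\bD(\PP^2))$ with the same Gram matrix $\chi_{\PP^2}$; hence the isomorphism of abelian groups $\GG \to \NGr(\bD(\PP^2))$ sending $\ce_i$ to $[\cO_{\PP^2}(i-1)]$ identifies $\chi$ with the Euler form, i.e.\ is an isometry, and under it the norm-minimal basis $\ce_\bullet$ corresponds to the standard collection $(\cO,\cO(1),\cO(2))$. I do not expect any real obstacle: this corollary is essentially a repackaging of Lemma~\ref{lemma:n=3}, and the one genuinely needed step is the minimality observation of the second paragraph, which is precisely what licenses invoking that lemma.
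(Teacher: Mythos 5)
Your proposal is correct and follows the paper's own (implicit) proof: the corollary is deduced from Lemma~\ref{lemma:n=3} together with the Gram-matrix computation in the paragraph preceding it, and your observation that a rank-$3$ geometric pseudolattice is automatically minimal (its Neron--Severi lattice is positive definite of rank $1$, hence cannot represent $-1$, so Lemma~\ref{lemma:minimality-criterion} applies) supplies exactly the nonzero-rank hypothesis that the paper leaves tacit. One small typo inherited from the paper: by Definition~\ref{def:toric} one has $\lambda_{1,3}=\lambda_{1,2}+\lambda_{2,3}=2H$ (the third vector of the toric system is $\lambda_{3,4}=H$), and it is $a_{1,3}=(r_1r_3\lambda_{1,3})^2=4$ that produces the entry $n_{1,3}=6$ of $\chi_{\PP^2}$.
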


Now we pass to the case $n = 4$.
The results in this case are quite close to the results of~\cite{dTVdB}.

\begin{lemma}\label{lemma:n=4}
If $n = 4$, then $r_i = \pm 1$ and $K_\GG^2 = 8$.
\end{lemma}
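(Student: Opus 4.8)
The plan is to run the same strategy as in the proof of Lemma~\ref{lemma:n=3}, transporting everything to the toric system $\lambda_{\bullet,\bullet}$ attached to $\ce_\bullet$ by Proposition~\ref{proposition:lambda-toric} and to the associated vectors $\ell_{i,i+1}$ in the plane $N_\RR$. For $n=4$ the Neron--Severi lattice $\rL:=\NS(\GG)$ has rank $2$ and signature $(1,1)$; it is unimodular by Lemmas~\ref{lemma:chi-unimodular} and~\ref{lemma:chi-ker-r}, and $K_\GG\in\rL$ is integral by Lemma~\ref{lemma:canonical-class}, so in particular $K_\GG^2\in\ZZ$. I write $r_i:=\br(\ce_i)\ne 0$.

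First I would pin down the signs of the $a_{i,i+1}$. By Corollary~\ref{corollary:three-extremal} at least three of the $\ell_{i,i+1}$ are extremal, so by the contrapositive of Lemma~\ref{lemma:negative-not-extremal} at least three of the four integers $a_{i,i+1}$ are $\ge 0$. Among any three indices in $\ZZ/4\ZZ$ there is a non-adjacent pair; applying Lemma~\ref{lemma:extremal-pair} to it, and noting that the adjacent alternative is impossible, gives — after a cyclic relabelling of $\ce_\bullet$, which preserves norm-minimality (same helix), geometricity, and non-vanishing of the ranks (Lemma~\ref{lemma:serre-p}) — that $a_{1,2}=a_{3,4}=0$ and $\lambda_{1,2}\parallel\lambda_{3,4}$. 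Next, $n_{1,2}=\chi(\ce_1,\ce_2)\in\ZZ$ together with $a_{1,2}=0$ gives $r_1/r_2+r_2/r_1\in\ZZ$; writing $r_1/r_2=a/b$ in lowest terms this forces $a\mid b^2$, hence $a=\pm1$ and similarly $b=\pm1$, so $|r_1|=|r_2|=:p$. Likewise $|r_3|=|r_4|=:t$, and $\gcd(p,t)=1$ by Definition~\ref{def:toric}(6).

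The heart of the argument is the computation of $K_\GG^2$. Since $a_{1,2}=a_{3,4}=0$, the vectors $\lambda_{1,2},\lambda_{3,4}$ are nonzero (Lemma~\ref{lemma:chains-independent}), parallel, and isotropic, so I would fix a primitive isotropic $\lambda\in\rL$ with $\lambda_{1,2}=\alpha\lambda$ and $\lambda_{3,4}=\beta\lambda$ for positive rationals $\alpha,\beta$, together with $\nu\in\rL$ satisfying $\lambda\cdot\nu=1$ (available by unimodularity of $\rL$ and primitivity of $\lambda$) and, after replacing $\nu$ by $\nu-m\lambda$ if needed, $\nu^2=k\in\{0,1\}$; then $\{\lambda,\nu\}$ is a $\ZZ$-basis of $\rL$. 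Writing $\lambda_{2,3}=x\lambda+y\nu$ and $\lambda_{4,5}=u\lambda+v\nu$ and substituting into Definition~\ref{def:toric}(1), (2) and (5) one obtains in turn $v=y=1/(\alpha p^2)=1/(\beta t^2)$ (so $\alpha p^2=\beta t^2$), then $u=-x-yk$, and finally $-K_\GG=(\alpha+\beta-yk)\lambda+2y\nu$, whence $K_\GG^2=4y(\alpha+\beta)=4/p^2+4/t^2$. Since $K_\GG^2$ is a positive integer and $\gcd(p,t)=1$, the only possibilities are $(p,t)\in\{(1,1),(1,2),(2,1)\}$.

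It remains to exclude $\{p,t\}=\{1,2\}$; after a cyclic relabelling assume $p=1$. Then $r_1r_2=\pm1$ with $r_1r_2\lambda_{1,2}\in\rL$ (Definition~\ref{def:toric}(3)) forces $\alpha\in\ZZ$, and integrality of the $\nu$-coefficient $2y=2/\alpha$ of $-K_\GG$ forces $\alpha\in\{1,2\}$; integrality of the $\lambda$-coefficient $\alpha+\beta-yk=\tfrac54\alpha-\tfrac k\alpha$ then kills $\alpha=1$ outright and, for $\alpha=2$, forces $k=1$, giving $K_\GG=-2\lambda-\nu$. But then $K_\GG\cdot\lambda=-1$ while $\lambda^2=0$, so $K_\GG$ is not characteristic, contradicting geometricity of $\GG$. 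Hence $p=t=1$, i.e.\ $r_i=\pm1$ for all $i$, and $K_\GG^2=4+4=8$. The one genuinely delicate point is this last exclusion: it really uses geometricity (the characteristic property of $K_\GG$), not merely the toric-system axioms; everything else is bookkeeping with Definition~\ref{def:toric} and the extremality results already established.
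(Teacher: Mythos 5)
Your proof is correct and follows the same overall strategy as the paper's: extremality plus Lemma~\ref{lemma:extremal-pair} force $a_{1,2}=a_{3,4}=0$ with $\lambda_{1,2}$ and $\lambda_{3,4}$ parallel and isotropic, and one then expands the toric system in a basis $\{\lambda,\nu\}$ of the rank-$2$ unimodular lattice $\NS(\GG)$ and exploits integrality of $K_\GG$. Two of your steps genuinely diverge from the paper and are worth recording. First, to get $|r_1|=|r_2|$ and $|r_3|=|r_4|$ the paper invokes Lemma~\ref{lemma:a-r} (hence norm-minimality) together with $a_{i,i+1}=0$, whereas you deduce it purely arithmetically from $n_{i,i+1}=(r_i^2+r_{i+1}^2)/(r_ir_{i+1})\in\ZZ$, i.e.\ from Definition~\ref{def:toric}(4); this is slightly more economical, since norm-minimality then enters only through Lemma~\ref{lemma:negative-not-extremal}. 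Second, your endgame is rearranged: the paper uses the characteristic property of $K_\GG$ early to force $c_{23}=c_{41}\in\ZZ$, normalizes to $c_{23}=1$, and then integrality of the $f$-component of $K_\GG$ yields $1/r_2^2+1/r_3^2\in\ZZ$, which forces $r_i^2=1$ in one stroke; you instead compute $K_\GG^2=4/p^2+4/t^2$ and use its integrality, which leaves the stray case $\{p,t\}=\{1,2\}$, and you correctly observe that the toric-system axioms alone do not exclude it --- only the characteristic property does, via $K_\GG\cdot\lambda=-1$ against $\lambda^2=0$. Both routes are valid (I checked the computation $-K_\GG=(\alpha+\beta-yk)\lambda+2y\nu$ and $K_\GG^2=4y(\alpha+\beta)$, and the exclusion of $\alpha=1$ and of $(\alpha,k)=(2,0)$); yours has the minor cost of an extra case but the merit of isolating exactly where geometricity, as opposed to mere unimodularity and integrality of $K_\GG$, is indispensable.
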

\begin{proof}
Again, we may assume that all the ranks are positive.
By Corollary~\ref{corollary:three-extremal} there are at least three extremal vectors $\ell_{i,i+1}$, and by Lemma~\ref{lemma:negative-not-extremal} the corresponding integers $a_{i,i+1}$ are non-negative.
After a cyclic shift of indices we may assume 
\begin{equation*}
a_{12}, a_{23}, a_{34} \ge 0.
\end{equation*}
Applying Lemma~\ref{lemma:extremal-pair} we conclude that $a_{12} = a_{34} = 0$, $\lambda_{12}$ and $\lambda_{34}$ are proportional, and $\lambda_{12}^2 = \lambda_{34}^2 = 0$.
By Lemma~\ref{lemma:a-r} we have
\begin{equation*}
r_1^2 = r_2^2,
\qquad 
r_3^2 = r_4^2.
\end{equation*}
Since a multiple of $\lambda_{12}$ is integral, we have
\begin{equation*}
\lambda_{12} = c_{12}f,
\qquad 
\lambda_{34} = c_{34}f,
\qquad 
\text{where $f \in \NS(\GG)$ is primitive with $f^2 = 0$}
\end{equation*}
with $c_{12}, c_{34} \in \QQ$.
Since $\NS(\GG)$ is unimodular, there is an element $s \in \NS(\GG)$ such that $(f,s)$ is a basis of $\NS(\GG)$ and 
\begin{equation*}
f \cdot s = 1,
\qquad 
d := s^2 \in \{0, -1 \}.
\end{equation*}
We have
\begin{equation*}
\lambda_{23} = c_{23}s + c'_{23}f,
\qquad 
\lambda_{41} = c_{41}s + c'_{41}f.
\end{equation*}
Here all $c$ and $c'$ are rational numbers, and by Lemma~\ref{lemma:chains-independent} all $c$ are nonzero.
We have
\begin{equation*}
c_{23}c_{34} = \lambda_{23} \cdot \lambda_{34} = \frac1{r_3^2} = \frac1{r_4^2} = \lambda_{34} \cdot \lambda_{41} = c_{34}c_{41},
\end{equation*}
hence $c_{23} = c_{41}$.
Further,
\begin{equation*}
-K_\GG = \lambda_{12} + \lambda_{23} + \lambda_{34} + \lambda_{41} = (c_{12} + c'_{23} + c_{34} + c'_{41})f + 2c_{23}s.
\end{equation*}
Since $K_\GG$ is characteristic and $f^2 = 0$, we have $K_\GG \cdot f = 2c_{23}$ is even, hence $c_{23} \in \ZZ$.
On the other hand, $r_1r_2\lambda_{12} = r_2^2c_{12}f$ and $r_3r_4\lambda_{34} = r_3^2c_{34}f$ are integral, hence $r_2^2c_{12}$ and $r_3^2c_{34}$ are both integral.
But
\begin{equation*}
\frac1{r_2^2} = \lambda_{12}\lambda_{23} = c_{12}c_{23}
\qquad\text{and}\qquad
\frac1{r_3^2} = \lambda_{23}\lambda_{34} = c_{23}c_{34},
\end{equation*}
hence $(r_2^2c_{12})c_{23} = (r_3^2c_{34})c_{23} = 1$, hence (changing the signs of $f$ and $s$ if necessary) we can write
\begin{equation*}
c_{12} = \frac1{r_2^2}, 
\qquad
c_{34} = \frac1{r_3^2},
\qquad
c_{23} = c_{41} = 1.
\end{equation*}
Finally, from $0 = \lambda_{23}\cdot \lambda_{41} = d + c'_{23} + c'_{41}$ we deduce that $c'_{23} + c'_{41} = -d \in \ZZ$, hence from integrality of~$K_\GG$ we obtain
\begin{equation*}
\frac1{r_2^2} + \frac1{r_3^2} = c_{12} + c_{34} \in \ZZ,
\end{equation*}
and from this it easily follows that $r_2^2 = r_3^2 = 1$.
We finally see that all $r_i$ are equal to $1$, all $c_{i,i+1} = 1$, and $-K_\GG = (2-d)f + 2s$, hence $K_\GG^2 = 4(2-d) + 4d = 8$.
\end{proof}

Thus, the Neron--Severi lattice of a geometric surface-like pseudolattice of rank $n = 4$ with an exceptional basis can be written as
\begin{equation}\label{eq:ns-4}
\NS(\GG) = \ZZ f \oplus \ZZ s,\qquad 
f^2 = 0,\quad f\cdot s = 1, \quad s^2  = d,\qquad
K_\GG = (2-d)f - 2s
\end{equation}
and $d \in \{0, -1\}$.
 
Assume first $d = 0$.
As it was shown in the proof of Lemma~\ref{lemma:n=4}, the toric system of a norm-minimal exceptional basis in $\GG$ has form
$\lambda_{12} = f$, $\lambda_{23} = s + c'f$, $\lambda_{34} = f$ for some $c' \in \ZZ$.
This allows to compute all $a_{i,j}$ and~$n_{i,j}$, and to show that the Gram matrix of the form $\chi$
is equal to the form $\chi_{\PP^1 \times \PP^1}$ from Example~\ref{example:p2-p1p1} with $c = c' + 1$.

Similarly, if $d = -1$ the toric system of a norm-minimal exceptional basis in $\GG$ should have form
$\lambda_{12} = f$, $\lambda_{23} = s + c'f$, $\lambda_{34} = f$ for some $c' \in \ZZ$, and computing integers $n_{i,j}$ 
one checks that $\GG$ is isometric to $\NGr(\bD(\mathbb{F}_1))$, where $\mathbb{F}_1$ is the Hirzebruch surface, see Example~\ref{example:p2-p1p1}.
On the other hand, again by Example~\ref{example:p2-p1p1} the corresponding exceptional collection in $\bD(\FF_1)$ can be transformed by mutations to a collection of objects of ranks $(1,1,1,0)$,
therefore the original exceptional basis is not norm-minimal, and this case does not fit into our assumptions.

\begin{corollary}\label{corollary:minimal-4}
If a geometric surface-like pseudolattice $\GG$ with $n = 4$  has a norm-minimal exceptional basis with all ranks being non-zero, 
then $\GG$ is isometric to~$\NGr(\bD(\PP^1 \times \PP^1))$.
A norm-minimal exceptional basis in such $\GG$ corresponds to one of the collections 
\begin{equation*}
(\cO, \cO(1,0), \cO(c,1), \cO(c+1,1))
\end{equation*}
in $\bD(\PP^1 \times \PP^1)$ for some $c \in \ZZ$.
\end{corollary}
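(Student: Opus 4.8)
The plan is to derive the statement from Lemma~\ref{lemma:n=4} together with the shape~\eqref{eq:ns-4} of the Neron--Severi lattice and the normal form of the toric system found in the proof of that lemma. First I would apply Lemma~\ref{lemma:n=4}: all ranks $r_i = \br(\ce_i)$ are $\pm 1$, so after replacing some $\ce_i$ by $-\ce_i$ I may assume $r_i = 1$ for every $i$; moreover $K_\GG^2 = 8$, $\NS(\GG) = \ZZ f \oplus \ZZ s$ with $f^2 = 0$, $f \cdot s = 1$, $s^2 = d \in \{0,-1\}$, and (after a cyclic shift of indices and up to signs) the toric system of $\ce_\bullet$ is $\lambda_{12} = f$, $\lambda_{34} = f$, $\lambda_{23} = s + c'f$ for some $c' \in \ZZ$, with $\lambda_{41}$ then forced by Definition~\ref{def:toric}(5) and~\eqref{eq:ns-4}. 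I would then treat the two cases $d = 0$ and $d = -1$ separately.

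Next I would turn the toric system into the Gram matrix of $\chi$ on the basis $(\ce_1,\dots,\ce_4)$. Because all $r_i = 1$, one has $a_{i,j} = \lambda_{i,j}^2$ and, by Definition~\ref{def:toric}(4) and the identity $\chi(\ce_i,\ce_j) = n_{i,j}$ recorded in the proof of Proposition~\ref{proposition:lambda-toric}, $\chi(\ce_i,\ce_j) = \lambda_{i,j}^2 + 2$ for $i < j \le 4$; together with $\chi(\ce_i,\ce_i) = 1$ and semiorthogonality this fixes the matrix once the six numbers $\lambda_{i,j}^2$ are computed from $f^2 = 0$, $f \cdot s = 1$, $s^2 = d$. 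The outcome is that for $d = 0$ the matrix is $\chi_{\PP^1 \times \PP^1}$ of Example~\ref{example:p2-p1p1} with $c = c'+1$, and for $d = -1$ it is $\chi_{\FF_1}$ of Example~\ref{example:p2-p1p1}, again with $c = c'+1$. This step is routine bookkeeping.

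For $d = 0$ this already finishes the proof: the map sending $\ce_i$ to the $i$-th term of $(\cO, \cO(1,0), \cO(c,1), \cO(c+1,1))$ is an isometry $\GG \xrightarrow{\ \sim\ } \NGr(\bD(\PP^1 \times \PP^1))$ taking $\ce_\bullet$ to that standard exceptional collection, which is exactly the assertion. For $d = -1$ I get an isometry $\GG \cong \NGr(\bD(\FF_1))$ under which $\ce_\bullet$ corresponds to $(\cO_{\FF_1}, \cO_{\FF_1}(f), \cO_{\FF_1}(s+cf), \cO_{\FF_1}(s+(c+1)f))$. By the last remark of Example~\ref{example:p2-p1p1} this collection is mutation-equivalent to the one with $c = 1$, which left-mutates to an exceptional collection of ranks $(1,0,1,1)$; transporting this through the isometry shows that $\ce_\bullet$ is mutation-equivalent to an exceptional basis of norm $1+0+1+1 = 3 < 4 = ||\ce_\bullet||$, contradicting norm-minimality of $\ce_\bullet$. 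Hence $d = -1$ is impossible, and the corollary follows.

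The whole argument is essentially a repackaging of Lemma~\ref{lemma:n=4}, so I do not expect a serious obstacle; the one point requiring care is the exclusion of $d = -1$, which hinges on the concrete fact from Example~\ref{example:p2-p1p1} that left-mutating $\cO_{\FF_1}(s+f)$ past $\cO_{\FF_1}(f)$ produces the (rank-zero) structure sheaf of the $(-1)$-section, thereby strictly lowering the norm. The remaining Gram-matrix computation is mechanical.
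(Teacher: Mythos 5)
Your proposal is correct and follows essentially the same route as the paper: invoke Lemma~\ref{lemma:n=4} to pin down the ranks, $K_\GG^2$, and the shape~\eqref{eq:ns-4} of $\NS(\GG)$, convert the normalized toric system into the Gram matrix via $\chi(\ce_i,\ce_j)=n_{i,j}$, identify the two cases with $\chi_{\PP^1\times\PP^1}$ and $\chi_{\FF_1}$, and exclude $d=-1$ by the mutation to ranks $(1,1,1,0)$ recorded in Example~\ref{example:p2-p1p1}, contradicting norm-minimality. You merely spell out the $\FF_1$ mutation step in slightly more detail than the paper does.
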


A combination of Corollary~\ref{corollary:minimal-n}, Corollary~\ref{corollary:minimal-3} and Corollary~\ref{corollary:minimal-4} proves Theorem~\ref{theorem:minimal-cats}.

\section{Minimal model program}\label{section:mmp}

Assume $\GG$ is a surface-like pseudolattice.

\subsection{Contraction}

Let $\ce \in \GG$ be an exceptional element of zero rank.
We consider the right orthogonal 
\begin{equation*}
\GG_\ce := \ce^\perp = \{ v \in \GG \mid \chi(\ce,v) = 0 \} \subset \GG.
\end{equation*}
Since $\chi(\ce,\ce) = 1$, we have a direct sum decomposition
\begin{equation}\label{eq:g-split}
\GG = \GG_\ce \oplus \ZZ\ce.
\end{equation}
Note that $\br(\ce) = 0$ means $\chi(\ce,\bp) = 0$, hence $\bp \in \GG_\ce$ and also~$\ce \in \bp^\perp$.
Abusing notation, we denote the projection of $\ce$ to $\NS(\GG) = \bp^\perp/\bp$ also by $\ce$.

\begin{lemma}\label{lemma:contraction}
The pseudolattice $\GG_\ce$ is surface-like and the element $\bp \in \GG_\ce$ is point-like.
Moreover, the rank function on $\GG_\ce$ is the restriction of the rank function on $\GG$;
we have an orthogonal direct sum decomposition
\begin{equation}\label{eq:contraction-ns}
\NS(\GG) =  \NS(\GG_\ce) \mathbin{\mathop{\oplus}\limits^\perp} \ZZ\ce,
\end{equation}
and a relation between the canonical classes
\begin{equation}\label{eq:contraction-k}
K_\GG = K_{\GG_\ce} + (-K_\GG \cdot \ce)\ce,
\end{equation}
If $\GG$ is unimodular or geometric, then so is $\GG_\ce$.
\end{lemma}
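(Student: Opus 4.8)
The plan is to exploit throughout the direct sum decomposition $\GG = \GG_\ce \oplus \ZZ\ce$ of~\eqref{eq:g-split}, together with the observations $\bp \in \GG_\ce$ and $\ce \in \bp^\perp$ recorded just before the statement. First I would check that $\GG_\ce$ is a pseudolattice: picking a basis $w_1,\dots,w_{n-1}$ of $\GG_\ce$, the Gram matrix of $\chi$ on $\GG$ in the basis $(w_1,\dots,w_{n-1},\ce)$ is block upper-triangular, of the shape $\left(\begin{smallmatrix} A & b \\ 0 & 1 \end{smallmatrix}\right)$, since $\chi(\ce,w_j)=0$ and $\chi(\ce,\ce)=1$; here $A$ is the Gram matrix of $\chi|_{\GG_\ce}$, and nondegeneracy of $\chi$ forces $\det A \ne 0$. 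The element $\bp$ stays primitive in $\GG_\ce$ (any proper divisibility there would contradict primitivity in $\GG$), and the right orthogonal of $\bp$ inside $\GG_\ce$ is exactly $\bp^\perp \cap \GG_\ce$, so the three conditions of Definition~\ref{def:surface-like} for $\bp$ in $\GG_\ce$ are just restrictions of those in $\GG$. This also makes the assertion on the rank function tautological.

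Next I would establish the Neron--Severi decomposition. Given $v \in \bp^\perp$, write $v = v' + t\ce$ with $v' := \LL_\ce(v) \in \GG_\ce$ and $t := \chi(\ce,v)$; then $\br(v') = \br(v) - t\,\br(\ce) = 0$, so $v' \in \bp^\perp \cap \GG_\ce$, and therefore $\bp^\perp = (\bp^\perp\cap\GG_\ce)\oplus\ZZ\ce$. Passing to the quotient by $\ZZ\bp \subset \bp^\perp\cap\GG_\ce$ gives $\NS(\GG) = \NS(\GG_\ce)\oplus\ZZ\ce$ as in~\eqref{eq:contraction-ns}. For orthogonality, note that $\ce$ and any $v' \in \bp^\perp\cap\GG_\ce$ both lie in $\bp^\perp$, where $\chi$ is symmetric by Definition~\ref{def:surface-like}(3), so by the description of $q$ in Lemma~\ref{lemma:chi-ker-r} one gets $q(\overline{v'},\overline\ce) = -\chi(v',\ce) = -\chi(\ce,v') = 0$, using $v' \in \GG_\ce = \ce^\perp$, together with $q(\overline\ce,\overline\ce) = -\chi(\ce,\ce) = -1$. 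It is worth emphasising that this is precisely the point where the asymmetry of $\chi$ is harmless: $\ce$ has zero rank, so it lives in the symmetric locus $\bp^\perp$.

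For the canonical class, using the orthogonal splitting I would write $K_\GG = K' + t\ce$ with $K' \in \NS(\GG_\ce)\otimes\QQ$ and $t \in \QQ$; pairing with $\ce$ and using $\ce^2 = -1$ gives $t = -(K_\GG\cdot\ce)$, which is~\eqref{eq:contraction-k} provided $K' = K_{\GG_\ce}$. To prove the latter I would check that $K'$ satisfies the identity~\eqref{eq:chi-minus-k} characterising the canonical class of $\GG_\ce$: for $v_1,v_2\in\GG_\ce$, the vector $\bl(v_1,v_2)$ of~\eqref{eq:def-lambda} is the same whether computed in $\GG_\ce$ or in $\GG$, and it lies in $\bp^\perp\cap\GG_\ce$, hence is $q$-orthogonal to $\ce$; therefore $-q(K_\GG,\bl(v_1,v_2)) = -q(K',\bl(v_1,v_2))$, and comparing with~\eqref{eq:chi-minus-k} for $\GG$ and invoking the uniqueness in Lemma~\ref{lemma:canonical-class} applied to $\GG_\ce$ (whose intersection form is nondegenerate by Lemma~\ref{lemma:chi-ker-r}) yields $K' = K_{\GG_\ce}$.

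Finally, if $\GG$ is unimodular then $\det\!\left(\begin{smallmatrix} A & b \\ 0 & 1 \end{smallmatrix}\right) = \det A = \pm 1$, so $\GG_\ce$ is unimodular. If $\GG$ is geometric, integrality of $K_{\GG_\ce}$ follows from~\eqref{eq:contraction-k}, since $K_\GG,\ce \in \NS(\GG)$ and $K_\GG\cdot\ce\in\ZZ$ force $K_{\GG_\ce} = K_\GG + (K_\GG\cdot\ce)\ce \in \NS(\GG)\cap(\NS(\GG_\ce)\otimes\QQ) = \NS(\GG_\ce)$; Sylvester's law of inertia applied to $\NS(\GG) = \NS(\GG_\ce)\oplus\ZZ\ce$ with $\ce^2 = -1$ shows $\NS(\GG_\ce)$ has signature $(1,\rk\NS(\GG_\ce)-1)$; and $K_{\GG_\ce}$ is characteristic because for $D\in\NS(\GG_\ce)\subset\NS(\GG)$ we have $D^2 \equiv K_\GG\cdot D \equiv K_{\GG_\ce}\cdot D \pmod 2$, the cross term $(K_\GG\cdot\ce)(\ce\cdot D)$ vanishing by orthogonality. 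The only step demanding genuine attention is the identification $K'=K_{\GG_\ce}$, where one must make sure the $\ce$-component of $K_\GG$ really drops out upon restriction and cite the uniqueness clause of Lemma~\ref{lemma:canonical-class} for the sublattice; everything else is bookkeeping built on the orthogonal decomposition and on the vanishing of the rank function on $\ce$.
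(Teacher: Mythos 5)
Your proof is correct and follows essentially the same route as the paper: restricting the surface-like structure along the splitting $\GG = \GG_\ce \oplus \ZZ\ce$, passing to $\bp^\perp$ and its quotient for \eqref{eq:contraction-ns}, identifying $K_{\GG_\ce}$ as the orthogonal projection of $K_\GG$ via the uniqueness in Lemma~\ref{lemma:canonical-class}, and using $\ce^2=-1$ for \eqref{eq:contraction-k}. You spell out several points the paper leaves implicit (notably that $\chi(v',\ce)=0$ follows from $\chi(\ce,v')=0$ by symmetry of $\chi$ on $\bp^\perp$), but the argument is the same.
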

\begin{proof}
Clearly, $\bp$ is primitive in $\GG_\ce$, $\chi(\bp,\bp) = 0$, and $\chi_-(\bp,-)$ is zero on $\GG_\ce$.
Moreover, it is clear that the orthogonal of $\bp$ in $\GG_\ce$ is the intersection $\bp^\perp \cap \GG_\ce \subset \GG$, hence the form $\chi_-$ vanishes on it. 
This means that $\GG_\ce$ is surface-like with point-like element $\bp$.

Further, the direct sum decomposition~\eqref{eq:g-split} gives by restriction a direct sum 
\begin{equation*}
\bp^\perp = (\bp^\perp \cap \GG_\ce) \oplus \ZZ\ce,
\end{equation*}
and then by taking the quotient with respect to $\ZZ\bp$ a direct sum~\eqref{eq:contraction-ns}.
Its summands are mutually orthogonal by definition.

Furthermore, equality~\eqref{eq:chi-minus-k} shows that the orthogonal projection of~$K_\GG$ to $\NS(\GG_\ce)$ is the canonical class for $\GG_\ce$.
It fits into~\eqref{eq:contraction-k} since by Lemma~\ref{lemma:ne-zero-rank} we have $\ce^2 = -1$ as~$\ce$ is exceptional of zero rank.
Finally, unimodularity of $\GG_\ce$ is clear, and geometricity follows from~\eqref{eq:contraction-ns} and~\eqref{eq:contraction-k}.
\end{proof}

\begin{lemma}\label{lemma:mmp}
If $\GG$ is a surface-like pseudolattice then there is an exceptional sequence $\ce_1,\dots,\ce_k$ of rank zero elements 
such that the iterated contraction $\GG_{\ce_1,\dots,\ce_k}$ is a minimal surface-like pseudolattice.
\end{lemma}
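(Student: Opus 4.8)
The plan is to argue by induction on the rank $n = \rk\GG$, with the contraction operation and Lemma~\ref{lemma:contraction} providing the inductive step.

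If $\GG$ is already minimal, take $k = 0$; this in particular settles the base of the induction. So assume $\GG$ is not minimal. By Definition~\ref{def:minimlaity} there is an exceptional element $\ce_1 \in \GG$ with $\br(\ce_1) = 0$, and we form the contraction $\GG_{\ce_1} = \ce_1^\perp$. By Lemma~\ref{lemma:contraction} this is again a surface-like pseudolattice, with the same point-like element $\bp$ and with rank function the restriction of $\br$; and by the splitting $\GG = \GG_{\ce_1} \oplus \ZZ\ce_1$ of~\eqref{eq:g-split} its rank equals $n - 1$. By the induction hypothesis applied to $\GG_{\ce_1}$ there is an exceptional sequence $\ce_2,\dots,\ce_k$ of rank-zero elements of $\GG_{\ce_1}$ whose iterated contraction $(\GG_{\ce_1})_{\ce_2,\dots,\ce_k} = \GG_{\ce_1,\ce_2,\dots,\ce_k}$ is minimal. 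Because the rank function on $\GG_{\ce_1}$ is the restriction of $\br$, each $\ce_i$ with $i \ge 2$ also has rank zero as an element of $\GG$, so all the contractions involved are well defined (this is already implicit in the inductive statement).

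It remains to verify that $(\ce_1,\ce_2,\dots,\ce_k)$ is an exceptional sequence in $\GG$. Each $\ce_i$ is exceptional since the condition $\chi(\ce_i,\ce_i) = 1$ is insensitive to restricting $\chi$. For the semiorthogonality $\chi(\ce_i,\ce_j) = 0$ with $i > j$: when $j \ge 2$ this is part of the statement that $(\ce_2,\dots,\ce_k)$ is exceptional in $\GG_{\ce_1}$, whose form is the restriction of $\chi$; when $j = 1$, the membership $\ce_i \in \GG_{\ce_1} = \ce_1^\perp$ gives $\chi(\ce_1,\ce_i) = 0$, and since $\br(\ce_1) = \br(\ce_i) = 0$ both vectors lie in $\bp^\perp = \Ker\br$, on which $\chi$ is symmetric by Definition~\ref{def:surface-like}(3), so $\chi(\ce_i,\ce_1) = \chi(\ce_1,\ce_i) = 0$ as well. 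This completes the induction; note that the same symmetry argument shows the whole collection $\ce_1,\dots,\ce_k$ is in fact mutually orthogonal.

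The argument is essentially bookkeeping: termination is automatic because each contraction drops the rank by one, so the process cannot be iterated indefinitely. The only point requiring a little attention is the direction of the semiorthogonality inequality in the sequence produced, and this is handled by the symmetry of $\chi$ on $\bp^\perp$ coming from Definition~\ref{def:surface-like}(3); I do not anticipate any genuine obstacle.
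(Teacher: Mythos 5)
Your proof is correct and is exactly the induction on $\rk\GG$ that the paper itself invokes (the paper's proof is the single line ``Evidently follows by induction on the rank of $\GG$''). Your careful verification of semiorthogonality in the direction $\chi(\ce_i,\ce_1)=0$ via the symmetry of $\chi$ on $\bp^\perp$ is a worthwhile detail the paper leaves implicit.
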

\begin{proof}
Evidently follows by induction on the rank of $\GG$.
\end{proof}

The pseudolattice obtained from $\GG$ by an iterated contraction is called {\sf a minimal model} of $\GG$.
As we have seen, minimal geometric surface-like pseudolattices admitting an exceptional basis are isometric to numerical Grothendieck groups of $\PP^2$ or $\PP^1\times \PP^1$, 
so Lemma~\ref{lemma:mmp} can be thought of as a categorical minimal model program for surface-like pseudolattices of that kind.

\subsection{Defect}

In this section we define the defect of a lattice and of a surface-like pseudolattice.

\begin{definition}\label{def:defect}
Let $\rL$ be a lattice with a vector $K$.
The {\sf defect} of $(\rL,K)$ is defined as the integer
\begin{equation*}
\delta(\rL,K) := K^2 + \rk \rL - 10
\end{equation*}
If $\GG$ is a unimodular surface-like pseudolattice, we define its defect as $\delta(\GG) := \delta(\NS(\GG),K_\GG)$.
\end{definition}

It is easy to see that the defect is zero for numerical Grothendieck groups of surfaces with zero irregularity and geometric genus.

\begin{lemma}
Let $X$ be a smooth projective surface over an algebraically closed field of zero characteristic with $q(X) = p_g(X) = 0$.
The corresponding pseudolattice $\GG = \NGr(\bD(X))$ has zero defect $\delta(\GG) = 0$.
\end{lemma}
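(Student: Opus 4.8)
The plan is to unwind the definition of $\delta(\GG)$ and reduce the statement to Noether's formula.

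By Example~\ref{example:standard} the Neron--Severi lattice $\NS(\GG)$ of $\GG = \NGr(\bD(X))$ is canonically identified with the numerical Neron--Severi lattice $\NS(X)$, and the Riemann--Roch computation carried out there shows that the form $\chi_-$ on $\GG$ is governed by $K_X$; comparing with \eqref{eq:chi-minus-k} this identifies $K_\GG$ with $\pm K_X$, so in particular $K_\GG^2 = K_X^2$ and $\rk \NS(\GG) = \rho(X)$, the Picard number of $X$. Moreover, since $X$ (over an algebraically closed field) carries a zero-cycle of degree $1$ and, when $p_g(X) = 0$, the numerical Neron--Severi lattice of $X$ is unimodular (it is the cup-product lattice $H^2(X,\ZZ)/\mathrm{torsion}$, unimodular by Poincar\'e duality), Lemma~\ref{lemma:chi-ker-r} shows that $\GG$ is unimodular, so its defect is defined and equals $\delta(\GG) = K_X^2 + \rho(X) - 10$. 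Thus the claim is equivalent to the identity $K_X^2 + \rho(X) = 10$.

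To prove this identity I would pass, via the Lefschetz principle, to the case that $X$ is a smooth projective complex surface (all of $K_X^2$, $\rho(X)$, $q(X)$, $p_g(X)$ are unchanged under extension of an algebraically closed ground field of characteristic zero), and then combine three standard facts: Noether's formula $12\chi(\cO_X) = K_X^2 + c_2(X)$ with $c_2(X) = e(X)$ the topological Euler number; the value $\chi(\cO_X) = 1 - q(X) + p_g(X) = 1$; and the Hodge-theoretic identity $e(X) = 2 - 2b_1(X) + b_2(X)$ together with $b_1(X) = 2q(X) = 0$ and, since $H^2(X,\CC)$ is then purely of type $(1,1)$, the Lefschetz $(1,1)$-theorem giving $b_2(X) = \rho(X)$. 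These yield $e(X) = 2 + \rho(X)$, hence $12 = K_X^2 + 2 + \rho(X)$, i.e. $K_X^2 + \rho(X) = 10$, so $\delta(\GG) = 0$.

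There is no serious obstacle here; the only points needing a word of justification are the bookkeeping identifications $\rk \NS(\GG) = \rho(X)$ and $K_\GG^2 = K_X^2$ in the first paragraph (which come straight from Example~\ref{example:standard}), and the input $b_2(X) = \rho(X)$ for $p_g(X)=0$, which is where the characteristic zero hypothesis actually enters, through the Lefschetz $(1,1)$-theorem.
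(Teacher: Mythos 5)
Your proof is correct and follows essentially the same route as the paper: identify $\NS(\GG)$ with $\NS(X)$ and $K_\GG$ with $\pm K_X$, then combine Noether's formula with $\chi(\cO_X)=1$ and $\rk\NS(X)=e(X)-2$ (which the paper asserts directly and you justify via $b_1=0$ and $b_2=\rho$). The extra care you take with unimodularity, the Lefschetz principle, and the Lefschetz $(1,1)$-theorem just fills in details the paper leaves implicit.
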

\begin{proof}
Since $H^1(X,\cO_X) = H^2(X,\cO_X) = 0$, we have $\rk \NS(X) = e - 2$, where $e$ is the topological Euler characteristic of $X$, and the holomorphic Euler characteristic $\chi(\cO_X)$ equals 1.
By Noether formula we have $e = 12\chi(\cO_X) - K_X^2$, hence $\rk \NS(\GG) = 10 - K_\GG^2$, hence $\delta(\GG) = 0$.
\end{proof}

In general, the defect of a surface-like pseudolattice can be both positive and negative.

\begin{example}
Let $X \to C$ be a ruled surface over a curve $C$ of genus $g$ 
(still under assumption that the base field is algebraically closed of zero characteristic)
and let $\GG = \NGr(\bD(X))$.
Then $\rk \NS(X) = 2$, while $K_X^2 = 8(1-g)$, hence $\delta(\GG) = 8(1-g) + 2 - 10 = -8g$.

Assume further that $X$ has a section $i \colon C \to X$ with normal bundle of degree $-1$ (for instance, $X$ can be the projectivization of a direct sum $\cO_C \oplus \cL$, where $\deg \cL = -1$). 
Set $\ce$ to be the class of the sheaf~$i_*\cO_C$ in the numerical Grothendieck group $\NGr(\bD(X))$.
Clearly, it is exceptional of rank~0, and $K_X \cdot \ce = 2g - 1$.
Consequently, by Lemma~\ref{lemma:contraction-defect} below the contraction $\GG_\ce$ has defect
\begin{equation*}
\delta(\GG_\ce) = \delta(\GG) - (1 - (K_X \cdot \ce)^2) = -8g - (1 - (2g - 1)^2) = 4g(g - 3).
\end{equation*}
In particular, it is negative for $g = 1$ and $g = 2$, zero for $g = 0$ and $g = 3$, and positive for $g > 3$.
\end{example}

As a result of classification of Theorem~\ref{theorem:minimal-cats} (more precisely, see Lemma~\ref{lemma:n=3} and Lemma~\ref{lemma:n=4}) we have

\begin{corollary}
If $\GG$ is a minimal geometric surface-like pseudolattice admitting an exceptional basis, then $\delta(\GG) = 0$.
\end{corollary}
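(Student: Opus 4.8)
The plan is to deduce this immediately from the classification Theorem~\ref{theorem:minimal-cats}, so the "proof" is really just bookkeeping with the numerical invariants. Let $\GG$ be a minimal geometric surface-like pseudolattice of rank $n$ admitting an exceptional basis. Since $\GG$ then carries an exceptional sequence of length $n = \rk\GG$, Lemma~\ref{lemma:chi-unimodular} shows $\GG$ is unimodular, so its defect $\delta(\GG) = \delta(\NS(\GG),K_\GG) = K_\GG^2 + \rk\NS(\GG) - 10$ is defined; and by Lemma~\ref{lemma:ns} we have $\rk\NS(\GG) = \rk\GG - 2 = n - 2$. Thus everything comes down to identifying $n$ and $K_\GG^2$.

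Next I would replace the given exceptional basis by a norm-minimal one; this is legitimate because any exceptional basis can be carried by mutations to a norm-minimal basis, and mutations send exceptional bases to exceptional bases. The key observation is that minimality of $\GG$ is an intrinsic property of the pseudolattice (absence of an exceptional element of zero rank), so it is unaffected by this replacement; moreover any element of an exceptional basis of $\GG$ is exceptional, hence of nonzero rank. Therefore the hypotheses of Theorem~\ref{theorem:minimal-cats} are satisfied: $\GG$ is geometric surface-like, $\ce_\bullet$ is norm-minimal, and $\br(\ce_i)\ne 0$ for all $i$.

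Theorem~\ref{theorem:minimal-cats} then yields $n \in \{3,4\}$ together with $K_\GG^2 = 12 - n$. Substituting,
\[
\delta(\GG) = K_\GG^2 + \rk\NS(\GG) - 10 = (12 - n) + (n - 2) - 10 = 0,
\]
uniformly in $n$.

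There is no genuine obstacle here: once Theorem~\ref{theorem:minimal-cats} is in hand the statement is a one-line computation. The only points worth a word of care — and which I would state explicitly — are that passing to a norm-minimal basis preserves minimality (hence keeps all ranks nonzero, which is what licenses the use of Theorem~\ref{theorem:minimal-cats}) and that unimodularity, required merely for $\delta(\GG)$ to be defined, is automatic from the presence of an exceptional basis via Lemma~\ref{lemma:chi-unimodular}.
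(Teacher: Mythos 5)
Your proof is correct and follows exactly the route the paper intends: the paper states this corollary as an immediate consequence of Theorem~\ref{theorem:minimal-cats} (via Lemmas~\ref{lemma:n=3} and~\ref{lemma:n=4}, which give $K_\GG^2 = 12 - n$ for $n=3,4$), and your computation $\delta(\GG) = (12-n) + (n-2) - 10 = 0$ is precisely that deduction. The extra care you take — noting that unimodularity (needed for $\delta$ to be defined) comes from Lemma~\ref{lemma:chi-unimodular}, and that minimality guarantees all basis elements have nonzero rank so Theorem~\ref{theorem:minimal-cats} applies — is sound and already anticipated by the parenthetical ``for instance if $\GG$ is minimal'' in the theorem's hypothesis.
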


An important property is that the defect does not decrease under contractions of geometric categories.

\begin{lemma}\label{lemma:contraction-defect}
Assume $\GG$ is a surface-like pseudolattice and $\ce \in \GG$ is exceptional of zero rank.
Then 
\begin{equation}\label{eq:contraction-defect}
\delta(\GG) = \delta(\GG_\ce) + (1 - (K_\GG \cdot \ce)^2).
\end{equation}
In particular, if $\GG$ is geometric, then $\delta(\GG) \le \delta(\GG_\ce)$ and this becomes an equality if and only if $K_\GG \cdot \ce = \pm 1$.
\end{lemma}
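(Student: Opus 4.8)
The plan is to deduce~\eqref{eq:contraction-defect} by a direct computation from the structural description of the contraction in Lemma~\ref{lemma:contraction}, and then to extract the ``in particular'' clause from the characteristic property of the canonical class. First I would observe that the two defects are defined in the relevant situations: by Lemma~\ref{lemma:contraction} the pseudolattice $\GG_\ce$ is unimodular (resp.\ geometric, with integral canonical class) whenever $\GG$ is, so $\delta(\GG_\ce) = \delta(\NS(\GG_\ce),K_{\GG_\ce})$ makes sense alongside $\delta(\GG)$.

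Next I would record the two ingredients. Since $\br(\ce) = 0$ and $\ce$ is exceptional, Lemma~\ref{lemma:ne-zero-rank} gives $\ce^2 = -1$ in $\NS(\GG)$. From the orthogonal direct sum decomposition~\eqref{eq:contraction-ns} we get $\rk\NS(\GG) = \rk\NS(\GG_\ce) + 1$, and from the canonical class relation~\eqref{eq:contraction-k} together with the orthogonality of the summands in~\eqref{eq:contraction-ns} and $\ce^2 = -1$,
\begin{equation*}
K_\GG^2 = K_{\GG_\ce}^2 + (-K_\GG\cdot\ce)^2\,\ce^2 = K_{\GG_\ce}^2 - (K_\GG\cdot\ce)^2.
\end{equation*}
Substituting both identities into $\delta(\GG) = K_\GG^2 + \rk\NS(\GG) - 10$ and regrouping yields
\begin{equation*}
\delta(\GG) = \bigl(K_{\GG_\ce}^2 + \rk\NS(\GG_\ce) - 10\bigr) + 1 - (K_\GG\cdot\ce)^2 = \delta(\GG_\ce) + 1 - (K_\GG\cdot\ce)^2,
\end{equation*}
which is~\eqref{eq:contraction-defect}.

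For the final assertion I would assume $\GG$ geometric, so that $K_\GG$ is an integral characteristic vector of $\NS(\GG)$. Applying the congruence $D^2 \equiv K_\GG\cdot D \pmod 2$ to $D = \ce$ and using $\ce^2 = -1$ shows that $K_\GG\cdot\ce$ is odd, hence a nonzero integer; therefore $(K_\GG\cdot\ce)^2 \ge 1$, with equality precisely when $K_\GG\cdot\ce = \pm 1$. Plugging this into~\eqref{eq:contraction-defect} gives $\delta(\GG) \le \delta(\GG_\ce)$, with equality if and only if $K_\GG\cdot\ce = \pm 1$.

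I do not expect a genuine obstacle here: once Lemma~\ref{lemma:contraction} supplies~\eqref{eq:contraction-ns} and~\eqref{eq:contraction-k}, the identity~\eqref{eq:contraction-defect} is pure bookkeeping, and the only non-mechanical point is the parity argument forcing $K_\GG\cdot\ce$ to be odd (and hence nonzero), which is what makes the inequality strict unless $K_\GG\cdot\ce = \pm1$. The only thing to watch is that the defect is actually defined for $\GG_\ce$, which is why I invoke the preservation of unimodularity/geometricity from Lemma~\ref{lemma:contraction} at the outset.
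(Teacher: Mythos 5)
Your proof is correct and follows essentially the same route as the paper: the identity is the same bookkeeping from the orthogonal decomposition~\eqref{eq:contraction-ns}, the canonical class relation~\eqref{eq:contraction-k}, and $\ce^2=-1$, and the inequality comes from the same parity argument $K_\GG\cdot\ce\equiv\ce^2\pmod 2$ forcing $K_\GG\cdot\ce$ to be odd. The paper merely compresses the first half to ``easily follows from Lemma~\ref{lemma:contraction}''; you have written out exactly the intended computation.
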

\begin{proof}
Equality~\eqref{eq:contraction-defect} easily follows from Lemma~\ref{lemma:contraction}.
Further, note that $K_\GG \cdot \ce \equiv \ce^2 \pmod 2$ since $K_\GG$ is characteristic, hence $K_\GG \cdot \ce$ is an odd integer, 
hence the second summand in the right hand side of~\eqref{eq:contraction-defect} is non-positive.
This proves the required inequality of the defects.
\end{proof}

\subsection{Exceptional bases in geometric surface-like pseudolattices}

Combining the minimal model program with the classification result of Theorem~\ref{theorem:minimal-cats} we get the following results.

\begin{theorem}[cf.~\protect{\cite[Corollary~9.12 and Corollary~10.7]{Pe}}]
\label{theorem-ranks-0-1}
Let $\GG$ be a geometric surface-like pseudolattice.
Any exceptional basis in $\GG$ can be transformed by mutations into an exceptional basis consisting of~$3$ or $4$ elements of rank~$1$ and all other elements of rank $0$.
\end{theorem}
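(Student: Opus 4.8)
The plan is to induct on $n=\rk\GG$, using contractions (Lemma~\ref{lemma:mmp}) as a minimal model program and reducing the minimal case to the classification in Theorem~\ref{theorem:minimal-cats}. Throughout, $\GG$ is unimodular by Lemma~\ref{lemma:chi-unimodular}, hence carries a Serre operator $\rS_\GG$ and its exceptional bases extend to helices.

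I would first replace the given exceptional basis $\ce_\bullet$ by a norm-minimal one $\ce'_\bullet$ by mutations; this is harmless for the statement. If all ranks $\br(\ce'_i)$ are nonzero, Theorem~\ref{theorem:minimal-cats} applies directly: $\GG$ is isometric to $\NGr(\bD(\PP^2))$ or to $\NGr(\bD(\PP^1\times\PP^1))$, so $n\in\{3,4\}$, and $\ce'_\bullet$ corresponds to a standard exceptional collection of line bundles, so every $\br(\ce'_i)=\pm1$; replacing those $\ce'_i$ with $\br(\ce'_i)=-1$ by $-\ce'_i$ (this preserves the exceptional-basis property, since the diagonal entries and the below-diagonal zeros of the Gram matrix are unchanged, and categorically it is just a shift) we get a basis of $3$ or $4$ elements of rank $1$ with no rank-$0$ element. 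This disposes of the base case $n=3$: there $\NS(\GG)$ is a unimodular lattice of rank $1$ and signature $(1,0)$, hence isometric to $\ZZ H$ with $H^2=1$, which does not represent $-1$, so $\GG$ is minimal by Lemma~\ref{lemma:minimality-criterion} and the above alternative always occurs.

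Now suppose $n\ge4$ and $\br(\ce'_j)=0$ for some $j$. I would rotate the helix window: the basis $(\ce'_{j+1},\ce'_{j+2},\dots,\ce'_{j+n})$ is obtained from $\ce'_\bullet$ by a sequence of right mutations (moving $\ce'_1,\dots,\ce'_j$ successively to the right, as in Section~\ref{section:pseudolattices}), hence is mutation-equivalent to $\ce_\bullet$, and its last element $\ce:=\ce'_{j+n}=\rS_\GG^{-1}(\ce'_j)$ has rank $\br(\ce'_j)=0$ by Lemma~\ref{lemma:serre-p}. I then contract with respect to $\ce$: by Lemma~\ref{lemma:contraction}, $\GG_\ce=\ce^\perp$ is a geometric surface-like pseudolattice of rank $n-1$; the vectors $\ce'_{j+1},\dots,\ce'_{j+n-1}$ lie in $\ce^\perp$ by semiorthogonality and, by the splitting $\GG=\GG_\ce\oplus\ZZ\ce$, form an exceptional basis of $\GG_\ce$. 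By the induction hypothesis this basis can be transformed by mutations inside $\GG_\ce$ into one with $3$ or $4$ elements of rank $1$ and the rest of rank $0$. Because $\chi$ on $\GG_\ce$ is the restriction of $\chi$ on $\GG$, the mutation operators~\eqref{eq:mutations} computed in $\GG_\ce$ coincide with those in $\GG$, so this sequence of mutations lifts to mutations of $(\ce'_{j+1},\dots,\ce'_{j+n-1},\ce)$ in $\GG$ that leave the last vector $\ce$ fixed; re-appending $\ce$, of rank $0$, at the end produces the required exceptional basis of $\GG$.

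The part needing the most care is the interaction of mutations with contraction --- that a mutation carried out in $\GG_\ce\subset\GG$ is the restriction of a mutation in $\GG$ and keeps $\ce$ in the last position --- together with the bookkeeping that the helix-window rotation and the earlier passage to a norm-minimal basis are realized by genuine mutations, so that mutation-equivalence is transitive along the induction. The sign normalization in the minimal case is the only point where bases differing by shifts of the basis vectors are tacitly identified.
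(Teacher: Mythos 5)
Your proof is correct, and it reaches the statement by a genuinely different route than the paper. The paper works in a single pass: starting from a norm\-/minimal basis it sorts all the zero\-/rank elements to the front by right mutations (using that $\br(\RR_{\ce_{i+1}}\ce_i)=\br(\ce_i)$ when $\br(\ce_{i+1})=0$, so norm\-/minimality is preserved), contracts all of them simultaneously, and then must check that the surviving collection is still norm\-/minimal \emph{in the contracted pseudolattice} --- the key observation being that a norm\-/decreasing mutation in $\GG_{\ce_1,\dots,\ce_k}$ would induce one in $\GG$ --- before invoking Theorem~\ref{theorem:minimal-cats}. You instead induct on $\rk\GG$, peeling off one zero\-/rank element at a time by rotating it to the last slot of the helix (legitimate: the window shift is a composition of right mutations, and $\br(\rS_\GG^{-1}(\ce'_j))=\br(\ce'_j)$ by Lemma~\ref{lemma:serre-p}) and contracting just that element; since your induction hypothesis is the full theorem for rank $n-1$, norm\-/minimality is re\-/established from scratch at each level and you never need to verify that it descends under contraction. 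Your identification of mutations in $\GG_\ce$ with mutations in $\GG$ fixing the last entry is sound ($\chi$ restricts, and semiorthogonality against $\ce$ is preserved because everything stays in $\ce^\perp$), and your base case $n=3$ via Lemma~\ref{lemma:minimality-criterion} is fine. What each approach buys: the paper's single pass also delivers the basis in the specific order (all zeros first, then the rank\-/one block) that the proof of Theorem~\ref{theorem-ranks-1} subsequently exploits, whereas your version yields the elements in an unspecified order and would need one extra sorting pass to recover that; in exchange you avoid the norm\-/minimality\-/descends\-/to\-/the\-/contraction argument entirely. The sign flips $\ce\mapsto-\ce$ are not literally mutations in either argument; the paper makes exactly the same tacit identification, so this is consistent with its conventions.
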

\begin{proof}
First, we mutate the exceptional basis to a norm-minimal basis $\ce_1,\dots,\ce_n$.
Next, we apply a sequence of right mutations to this collection according to the following rule:
if a pair $(\ce_i,\ce_{i+1})$ is such that $\br(\ce_i) \ne 0$ and $\br(\ce_{i+1}) = 0$, we mutate it to $(\ce_{i+1},\RR_{\ce_{i+1}}\ce_i)$.
Then $\br(\RR_{\ce_{i+1}}\ce_i) = \br(\ce_i)$ by~\eqref{eq:mutations}, so the new collection is still norm-minimal.
It is also clear that after a number of such operations we will have the following property:
there is $1 \le k \le n$ such that 
\begin{equation}\label{eq:ranks-0-1}
\br(\ce_1) = \dots = \br(\ce_k) = 0,
\qquad \text{and}\qquad 
\br(\ce_i) \ne 0\quad\text{for $i > k$}.
\end{equation}
We consider the iterated contraction $\GG' = \GG_{\ce_1,\dots,\ce_k}$ of $\GG$. 
Clearly, $\ce_{k+1},\dots,\ce_n$ is an exceptional basis in $\GG'$.
Furthermore, it is norm-minimal. 
Indeed, if there is a mutation in $\GG'$ decreasing the norm of the collection, then the same mutation in $\GG$ would also decrease the norm in the same way.
Since the ranks of all elements in the collection of $\GG'$ are non-zero, and $\GG'$ is geometric by Lemma~\ref{lemma:contraction}, we conclude by Theorem~\ref{theorem:minimal-cats} 
that the ranks of elements $\ce_{k+1}, \dots, \ce_n$ are $\pm1$ and $n - k$ is equal to $3$ or $4$.
Changing the signs of elements with negative ranks, we deduce that the obtained collection in $\GG$ consists of elements of rank 0 and 1 only.
\end{proof}

\begin{theorem}[cf.~\protect{\cite[Theorem~10.8]{Pe}}]
\label{theorem-ranks-1}
Let $\GG$ be a geometric surface-like pseudolattice with \hbox{$\delta(\GG) = 0$}.
Any exceptional basis in~$\GG$ can be transformed by mutations into an exceptional basis consisting of elements of rank $1$.
\end{theorem}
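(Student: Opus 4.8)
The plan is to deduce Theorem~\ref{theorem-ranks-1} from Theorem~\ref{theorem-ranks-0-1} by an induction that strips off the rank‑zero vectors of an exceptional basis one at a time, each removal being a single elementary right mutation. The point is that when $\delta(\GG)=0$ every rank‑zero vector of a suitable exceptional basis is a $(-1)$‑class meeting $K_\GG$ in $\pm1$, and such a vector can be absorbed into the neighbouring rank‑$1$ vector.

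First I would invoke (the proof of) Theorem~\ref{theorem-ranks-0-1}, which — through Theorem~\ref{theorem:minimal-cats} — produces an exceptional basis $(\ce_1,\dots,\ce_n)$ with $\br(\ce_1)=\dots=\br(\ce_k)=0$, $\br(\ce_{k+1})=\dots=\br(\ce_n)=1$, $n-k\in\{3,4\}$, and such that the iterated contraction $\GG':=\GG_{\ce_1,\dots,\ce_k}$ is geometric and isometric to $\NGr(\bD(\PP^2))$ or $\NGr(\bD(\PP^1\times\PP^1))$; in particular $\delta(\GG')=0$. Write $\bar\ce_i\in\NS(\GG)$ for the image of $\ce_i$, $i\le k$. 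By Lemma~\ref{lemma:contraction} we have an orthogonal decomposition $\NS(\GG)=\NS(\GG')\oplus\bigoplus_{i=1}^k\ZZ\bar\ce_i$ with $\bar\ce_i^2=-1$, and by iterating~\eqref{eq:contraction-defect}
\begin{equation*}
0=\delta(\GG)=\delta(\GG')+\sum_{i=1}^{k}\bigl(1-(K_\GG\cdot\bar\ce_i)^2\bigr)=\sum_{i=1}^{k}\bigl(1-(K_\GG\cdot\bar\ce_i)^2\bigr).
\end{equation*}
Since $K_\GG$ is characteristic and $\bar\ce_i^2=-1$, every $K_\GG\cdot\bar\ce_i$ is odd, so each summand is non‑positive; hence they all vanish and $K_\GG\cdot\bar\ce_i=\pm1$ for all $i\le k$.

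Next I would run a descending induction on $l$ with the following statement: if $(\fe_1,\dots,\fe_n)$ is an exceptional basis with $\fe_i=\ce_i$ (hence $\br(\fe_i)=0$) for $i\le l$ and $\br(\fe_j)=\pm1$ for $j>l$, then it can be carried by mutations to an exceptional basis of rank‑$\pm1$ vectors. The base case $l=k$ is what the previous paragraph supplies, and $l=0$ is the desired conclusion up to sign changes. For the inductive step with $l\ge1$: semiorthogonality gives $\chi(\fe_{l+1},\ce_l)=0$, while $\bl(\ce_l,\fe_{l+1})=-\br(\fe_{l+1})\bar\ce_l$ in $\NS(\GG)$, so by~\eqref{eq:chi-minus-k}
\begin{equation*}
\chi(\ce_l,\fe_{l+1})=\chi_-(\ce_l,\fe_{l+1})=-q\bigl(K_\GG,\bl(\ce_l,\fe_{l+1})\bigr)=\br(\fe_{l+1})\,(K_\GG\cdot\bar\ce_l)=\pm1 .
\end{equation*}
Consequently $\RR_{\fe_{l+1}}(\ce_l)=\ce_l-\chi(\ce_l,\fe_{l+1})\fe_{l+1}$ has rank $-\chi(\ce_l,\fe_{l+1})\br(\fe_{l+1})=\pm1\ne0$, so the elementary mutation $\RR_{l,l+1}$ replaces the basis by $(\ce_1,\dots,\ce_{l-1},\fe_{l+1},\RR_{\fe_{l+1}}\ce_l,\fe_{l+2},\dots,\fe_n)$, which is again of the assumed form, now for $l-1$. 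After $k$ such mutations we reach an exceptional basis all of whose vectors have rank $\pm1$; changing the signs of the vectors of negative rank, exactly as at the end of the proof of Theorem~\ref{theorem-ranks-0-1}, yields an exceptional basis of rank‑$1$ vectors.

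The only step with real content is the second paragraph: showing that each rank‑zero basis vector $\ce_i$ is a $(-1)$‑class with $K_\GG\cdot\bar\ce_i=\pm1$. This is exactly where the two hypotheses enter — $\delta(\GG)=0$ via the defect identity of Lemma~\ref{lemma:contraction-defect}, and geometricity via $K_\GG$ being characteristic, so that each $K_\GG\cdot\bar\ce_i$ is odd and the vanishing of $1-(K_\GG\cdot\bar\ce_i)^2$ then forces $K_\GG\cdot\bar\ce_i=\pm1$. The rest is routine bookkeeping: one should check that the normal form of Theorem~\ref{theorem-ranks-0-1} really places the rank‑zero vectors first (it does, as the rearranging right mutations in its proof leave ranks unchanged) and that the mutation $\RR_{l,l+1}$ preserves both the shape ``the first $l$ vectors are the original rank‑zero $\ce_i$'' and the semiorthogonality relation $\chi(\fe_{l+1},\ce_{l-1})=0$ needed at the following stage.
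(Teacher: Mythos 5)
Your proposal is correct and follows essentially the same route as the paper: invoke Theorem~\ref{theorem-ranks-0-1} to reach the normal form with $k$ rank-zero vectors in front, use $\delta(\GG)=\delta(\GG')=0$ together with Lemma~\ref{lemma:contraction-defect} (and $K_\GG$ characteristic) to get $K_\GG\cdot\ce_i=\pm1$, and then absorb each rank-zero vector into its rank-one neighbour by a right mutation, the key point in both arguments being $\chi(\ce_i,\ce_{i+1})=\chi_-(\ce_i,\ce_{i+1})=K_\GG\cdot\ce_i=\pm1$ via~\eqref{eq:chi-minus-k}. Your explicit descending induction is just a slightly more careful bookkeeping of the same sequence of mutations the paper performs.
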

\begin{proof}
By Theorem~\ref{theorem-ranks-0-1} there is an exceptional basis in $\GG$ satisfying~\eqref{eq:ranks-0-1} with $\br(\ce_i) = 1$ for $i > k$.
Note also that the category $\GG' = \GG_{\ce_1,\dots,\ce_k}$ has zero defect by Theorem~\ref{theorem:minimal-cats}.
Hence, by Lemma~\ref{lemma:contraction-defect} we have $K_\GG \cdot \ce_i = \pm 1$ for all $1 \le i \le k$.

We apply to this basis a sequence of right mutations according to the following rule:
if a pair $(\ce_i,\ce_{i+1})$ is such that $\br(\ce_i) = 0$ and $\br(\ce_{i+1}) = 1$, we mutate it to $(\ce_{i+1},\RR_{\ce_{i+1}}\ce_i)$.
Let us show that $\br(\RR_{\ce_{i+1}}\ce_i) = \pm1$. 
Indeed, by~\eqref{eq:chi-minus-k} we have
\begin{equation*}
\chi(\ce_i,\ce_{i+1}) = \chi_-(\ce_i,\ce_{i+1}) = - K_\GG \cdot \bl(\ce_i,\ce_{i+1}).
\end{equation*}
By rank assumptions and~\eqref{eq:def-lambda} we have $\bl(\ce_i,\ce_{i+1}) = -\ce_i$, hence $\chi(\ce_i,\ce_{i+1}) = K_\GG \cdot \ce_i = \pm 1$.
Therefore by~\eqref{eq:mutations} we have $\br(\RR_{\ce_{i+1}}\ce_i) = \chi(\ce_i,\ce_{i+1})\br(\ce_{i+1}) - \br(\ce_i) = \pm1$. 
If the rank is $-1$, we change the sign of the element to make the rank equal~1.

Clearly, after a finite number of such mutations we will get an exceptional basis consisting of rank~1 elements only.
\end{proof}

\subsection{Criterion for existence of an exceptional basis}

It is easy to show that the criterion for existence of a numerical exceptional collection in the derived category of a surface proved by Charles Vial in~\cite{V} also works for surface-like pseudolattices.
We start with a simple lemma.

\begin{lemma}\label{lemma:chi-1}
Let $(\GG,\chi)$ be a pseudolattice with $K_\GG$ integral and characteristic.
If $\br(v_1) = \br(v_2) = 1$ then $\chi(v_1,v_1) \equiv \chi(v_2,v_2) \pmod 2$.
Moreover, if $\chi(v,v)$ is odd for some $v \in \GG$ with $\br(v) = 1$, then for an appropriate choice of~$v$ one has $\chi(v,v) = 1$.
\end{lemma}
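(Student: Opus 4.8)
The plan is to reduce both assertions to short bilinear computations on $\GG$, using the description of $\chi$ on $\bp^\perp$ from Lemma~\ref{lemma:chi-ker-r} and the defining relation~\eqref{eq:chi-minus-k} for the canonical class; throughout, $\bp$ denotes the point-like element of $\GG$.

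For the congruence, I would set $w := v_1 - v_2$. Since $\br(v_1) = \br(v_2) = 1$ we have $w \in \Ker\br = \bp^\perp$; let $D \in \NS(\GG)$ be its class. Expanding $\chi(v_1,v_1) = \chi(v_2+w,v_2+w)$ by bilinearity, the difference $\chi(v_1,v_1) - \chi(v_2,v_2)$ equals $\chi(v_2,w) + \chi(w,v_2) + \chi(w,w)$. The last summand is $-q(D,D) = -D^2$ by Lemma~\ref{lemma:chi-ker-r}. For the first two I would write $\chi(v_2,w) + \chi(w,v_2) = 2\chi(v_2,w) - \chi_-(v_2,w)$, and observe that $\bl(v_2,w) = \br(v_2)\,w - \br(w)\,v_2 = w$ because $\br(w) = 0$, so its class in $\NS(\GG)$ is $D$ and~\eqref{eq:chi-minus-k} gives $\chi_-(v_2,w) = -K_\GG\cdot D$. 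Thus
\begin{equation*}
\chi(v_1,v_1) - \chi(v_2,v_2) = 2\chi(v_2,w) + K_\GG\cdot D - D^2 .
\end{equation*}
Reducing modulo $2$, the term $2\chi(v_2,w)$ disappears and $K_\GG\cdot D \equiv D^2 \pmod 2$ because $K_\GG$ is characteristic, which yields $\chi(v_1,v_1) \equiv \chi(v_2,v_2) \pmod 2$.

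For the second assertion I would modify $v$ by multiples of the point-like element. Since $\chi(\bp,\bp) = 0$ and $\chi(\bp,v) = \chi(v,\bp) = \br(v) = 1$, for any $k \in \ZZ$ the element $v + k\bp$ still has rank $1$ and satisfies $\chi(v+k\bp,\,v+k\bp) = \chi(v,v) + 2k$. If $\chi(v,v)$ is odd, then $k := (1 - \chi(v,v))/2$ is an integer, and $v' := v + k\bp$ has $\br(v') = 1$ and $\chi(v',v') = 1$.

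Both steps are routine; the only point needing attention is checking that the correction term $K_\GG\cdot D - D^2$ in the first computation is even, which is precisely where the hypothesis that $K_\GG$ is integral and characteristic is used, so I do not anticipate any genuine obstacle.
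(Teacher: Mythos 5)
Your proposal is correct and follows essentially the same route as the paper: both arguments write one rank-one vector as the other plus an element of $\Ker\br$, expand $\chi$ bilinearly, identify the cross terms modulo $2$ with $K_\GG\cdot D$ via~\eqref{eq:chi-minus-k} and the diagonal term with $\pm D^2$ via Lemma~\ref{lemma:chi-ker-r}, and invoke the characteristic property of $K_\GG$; the second assertion is handled identically by adding a multiple of $\bp$. The only cosmetic difference is that you make the intermediate identity $\chi(v_2,w)+\chi(w,v_2)=2\chi(v_2,w)-\chi_-(v_2,w)$ explicit, where the paper passes directly to the congruence.
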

\begin{proof}
We have $v_2 = v_1 + D$, where $\br(D) = 0$. Therefore
\begin{equation*}
\chi(v_2,v_2) = \chi(v_1,v_1) + \chi(v_1,D) + \chi(D,v_1) + \chi(D,D).
\end{equation*}
Furthermore, by~\eqref{eq:chi-minus-k} we have
\begin{equation*}
\chi(v_1,D) + \chi(D,v_1) \equiv K_\GG \cdot \bl(v_1,D) = K_\GG \cdot D \pmod 2,
\end{equation*}
and by Lemma~\ref{lemma:chi-ker-r} we have $\chi(D,D) \equiv D^2 \pmod 2$.
Therefore
\begin{equation*}
\chi(v_2,v_2) - \chi(v_1,v_1) \equiv K_\GG \cdot D + D^2 \equiv 0 \pmod 2,
\end{equation*}
since $K_\GG$ is characteristic.

For the second part note that
\begin{equation*}
\chi(v+t\bp,v+t\bp) = \chi(v,v) + 2t,
\end{equation*}
so if $\chi(v,v)$ is odd, an appropriate choice of $t$ ensures that $\chi(v+t\bp,v+t\bp) = 1$.
\end{proof}

The condition that $\chi(v,v)$ is odd for a rank 1 vector $v \in \GG$ is thus independent of the choice of $v$ and is equivalent to existence of a rank 1 vector $v$ with $\chi(v,v) = 1$.
If it holds we will say that {\sf $(\GG,\chi)$ represents~$1$ by a rank~$1$ vector}.
This is a pseudolattice analogue of Vial's condition $\chi(\cO_X) = 1$ for a surface $X$.
In terms of the matrix representation~\eqref{eq:chi-matrix} of a pseudolattice this condition can be rephrased as $a \equiv 1 \pmod 2$.

\begin{theorem}[\protect{cf.~\cite[Theorem~3.1]{V}}]\label{theorem:criterion}
Let $(\GG,\chi)$ be a unimodular geometric pseudolattice of rank $n \ge 3$ and zero defect such that $(\GG,\chi)$ represents $1$ by a rank~$1$ vector.
Then we have the following equivalences:
\begin{enumerate}
\item $n = 3$ and $K_\GG = -3H$ for some $H \in \NS(\GG)$ if and only if $\GG$ is isometric to $\NGr(\bD(\PP^2))$;
\item $n = 4$, $\NS(\GG)$ is even and $K_\GG = -2H$ for some $H \in \NS(\GG)$ if and only if $\GG$ is isometric to $\NGr(\bD(\PP^1 \times \PP^1))$;
\item $n \ge 4$, $\NS(\GG)$ is odd and $K_\GG$ is primitive if and only if $\GG$ is isometric to $\NGr(\bD(X_{n-3}))$, where $X_{n-3}$ is the blowup of $\PP^2$ in $n-3$ points.
\end{enumerate}
Furthermore, $\GG$ has an exceptional basis if and only if one of the three possibilities listed above is satisfied.
\end{theorem}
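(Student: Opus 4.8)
The plan is to pass from the pseudolattice to the pair $(\NS(\GG),K_\GG)$, establish the three equivalences by elementary lattice theory, and deduce the criterion for existence of an exceptional basis from the minimal model program of Section~\ref{section:mmp} together with the classification in Theorem~\ref{theorem:minimal-cats}. First I would unwind the blanket hypotheses. Unimodularity of $\GG$ makes $\br$ surjective and the intersection form $q$ on $\NS(\GG)$ unimodular (Lemma~\ref{lemma:chi-ker-r}); geometricity makes $K_\GG$ integral and characteristic and $\NS(\GG)$ of signature $(1,n-3)$; zero defect gives $K_\GG^2=12-n$; and the assumption that $(\GG,\chi)$ represents $1$ by a rank $1$ vector is precisely the condition $a\equiv 1\pmod 2$ in the presentation of Remark~\ref{remark:matrix}, so that after replacing $v_0$ by $v_0+t\bp$ — which adds $2t$ to $a$ while keeping $b'_i=0$, $d=1$, cf.\ the computation in Lemma~\ref{lemma:chi-1} — we may normalize $a=1$. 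With that normalization the Gram matrix~\eqref{eq:chi-matrix} is completely determined, once a basis of $\NS(\GG)$ is chosen, by the data $(\NS(\GG),q,K_\GG)$, and an isometry of this data lifts to an isometry of surface-like pseudolattices carrying $\bp$ to the point-like element of the target. Hence the isometry class of $(\GG,\bp)$ depends only on that of the lattice-with-vector $(\NS(\GG),K_\GG)$, and this observation does most of the work.

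Given the reduction, the ``if'' directions of (1)--(3) are a computation of $(\NS,K)$ for the standard examples: $\PP^2$ gives $(\ZZ H,-3H)$ with $H^2=1$ and $n=3$; $\PP^1\times\PP^1$ gives $(U,-2H)$, where $U$ is the hyperbolic plane (so even) and $H$ is the sum of the two rulings, with $n=4$; and the blow-up $X_{n-3}$ of $\PP^2$ in $n-3$ points gives $\NS(X_{n-3})\cong I_{1,n-3}$ (odd), $K_{X_{n-3}}=-3H+E_1+\dots+E_{n-3}$, which is primitive since $K_{X_{n-3}}\cdot E_i=-1$, with rank $n$. For the ``only if'' directions it remains, by the reduction, to identify $(\NS(\GG),K_\GG)$: if $n=3$ then $\NS(\GG)=\ZZ H_0$ with $H_0^2=1$, and $K_\GG=-3H$ together with $K_\GG^2=9$ forces $H=\pm H_0$; if $n=4$ and $\NS(\GG)$ is even then $\NS(\GG)\cong U$, the unique even unimodular lattice of signature $(1,1)$, and $K_\GG=-2H$ with $K_\GG^2=8$ gives $H^2=2$, a vector unique up to $O(U)$; if $\NS(\GG)$ is odd then $\NS(\GG)\cong I_{1,n-3}$ by the classification of indefinite odd unimodular lattices, and $K_\GG$ is a primitive characteristic vector of square $9-(n-3)$, so \cite[Proposition~A.12]{V} identifies $(\NS(\GG),K_\GG)$ with $(\NS(X_{n-3}),K_{X_{n-3}})$. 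Since $\bD(\PP^2)$, $\bD(\PP^1\times\PP^1)$ and $\bD(X_{n-3})$ carry full exceptional collections of lengths $3$, $4$ and $n$, their numerical Grothendieck groups have exceptional bases; together with the ``only if'' directions this gives the backward half of the final assertion.

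For the forward half, suppose $\GG$ has an exceptional basis. As in the proof of Theorem~\ref{theorem-ranks-0-1}, mutate it to a norm-minimal basis $\ce_1,\dots,\ce_n$ with $\br(\ce_1)=\dots=\br(\ce_k)=0$ and $\br(\ce_i)\ne0$ for $i>k$; the iterated contraction $\GG'=\GG_{\ce_1,\dots,\ce_k}$ is geometric (Lemma~\ref{lemma:contraction}) and inherits a norm-minimal exceptional basis of nonzero ranks, so Theorem~\ref{theorem:minimal-cats} identifies $\GG'$ with $\NGr(\bD(\PP^2))$ or $\NGr(\bD(\PP^1\times\PP^1))$, in particular $\delta(\GG')=0$. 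Since $\delta(\GG)=0$, iterating Lemma~\ref{lemma:contraction-defect} — comparing canonical classes at each step via~\eqref{eq:contraction-k}, so that $K_\GG\cdot\ce_i$ is the relevant invariant throughout — forces $K_\GG\cdot\ce_i=\pm1$ for all $i\le k$. With $\ce_i^2=-1$ (Lemma~\ref{lemma:ne-zero-rank}) and the decompositions~\eqref{eq:contraction-ns}, \eqref{eq:contraction-k}, this presents $\NS(\GG)$ as the orthogonal direct sum of $\NS(\GG')$ and $\bigoplus_{i=1}^k\ZZ\ce_i$, with $K_\GG=K_{\GG'}+\sum_i(\mp1)\ce_i$, primitive whenever $k\ge1$. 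If $k=0$ then $\GG=\GG'$ and the left-hand side of (1) or of (2) holds; if $k\ge1$ then $\NS(\GG)$ contains the vector $\ce_1$ of square $-1$, hence is odd, $K_\GG$ is primitive, and $n\ge4$, so the left-hand side of (3) holds. Combined with the backward half this proves the criterion. The step I expect to be most delicate is the reduction of the first paragraph — verifying that the presentation of Remark~\ref{remark:matrix}, normalized by $a=1$, reconstructs $(\GG,\bp)$ from $(\NS(\GG),K_\GG)$ functorially in isometries — together with getting the lattice input exactly right: the classification of indefinite unimodular lattices, \cite[Proposition~A.12]{V} in the form singling out precisely the primitive characteristic vectors of the prescribed square, and the identification $U\oplus\langle-1\rangle^{\oplus k}\cong I_{1,k+1}$ lurking in the last step.
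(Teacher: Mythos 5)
Your proof is correct and follows essentially the same route as the paper: normalize the Gram matrix via Remark~\ref{remark:matrix} and Lemma~\ref{lemma:chi-1} so that everything reduces to identifying the pair $(\NS(\GG),K_\GG)$, treat the three cases by elementary lattice theory plus \cite[Proposition~A.12]{V}, and derive the existence criterion from Theorem~\ref{theorem-ranks-0-1}, Theorem~\ref{theorem:minimal-cats}, and the defect equality of Lemma~\ref{lemma:contraction-defect}. The only difference is presentational: you state the reconstruction of $(\GG,\bp)$ from $(\NS(\GG),K_\GG)$ once as a general reduction, whereas the paper carries it out three times by writing the explicit matrices and matching them against geometric bases such as $(\cO(-2H),\cO_L,\cO_P)$.
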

\begin{proof}
The fact that for the surfaces $\PP^2$, $\PP^1 \times \PP^1$ and $X_{n-3}$ (with the standard surface-like structure) the numerical Grothendieck group has the properties listed in (1), (2), and (3) is evident.
Let us check the converse.

Assume $n = 3$ and $K_\GG = -3H$. 
By zero defect assumption we have $9H^2 = K_\GG^2 = 12 - 3 = 9$, hence, using the signature assumption we have $H^2 = 1$ and $H$ is primitive.
By Lemma~\ref{lemma:chi-1} and Remark~\ref{remark:matrix} with appropriate choice of a vector $v_0 \in \GG$ the matrix of $\chi$ in the basis $(v_0,H,\bp)$ has form
\begin{equation*}
\chi = \left(
\begin{smallmatrix}
1 & 3 & 1 \\ 0 & -1 & 0 \\ 1 & 0 & 0
\end{smallmatrix}
\right)
\end{equation*}
The above matrix is equal to the matrix of $\chi$ in $\NGr(\bD(\PP^2))$ in the basis $(\cO(-2H),\cO_L,\cO_P)$, where $L$ is a line and $P$ is a point, hence $\GG$ is isometric to $\NGr(\bD(\PP^2))$.

Assume $n = 4$, $\NS(\GG)$ even and $K_\GG = -2H$. 
By zero defect we have $4H^2 = K_\GG^2 = 12 - 4 = 8$, hence $H^2 = 2$ and $H$ is primitive.
By the parity and signature assumption $\NS(\GG)$ is the hyperbolic lattice, hence we may write $H = H_1 + H_2$, where $(H_1,H_2)$ is the standard hyperbolic basis.
By Lemma~\ref{lemma:chi-1} and Remark~\ref{remark:matrix} with appropriate choice of a vector $v_0 \in \GG$ the matrix of $\chi$ in the basis $(v_0,H_1,H_2,,\bp)$ has form
\begin{equation*}
\chi = \left(
\begin{smallmatrix}
1 & 2 & 2 & 1 \\ 0 & 0 & -1 & 0 \\ 0 & -1 & 0 & 0 \\ 1 & 0 & 0 & 0 
\end{smallmatrix}
\right)
\end{equation*}
The above matrix is equal to the matrix of $\chi$ in $\NGr(\bD(\PP^1 \times \PP^1))$ in the basis $(\cO(-H_1 - H_2),\cO_{L_1}, \cO_{L_2}, \cO_P)$, where $L_1$ and~$L_2$ are the two rulings and $P$ is a point, 
hence $\GG$ is isometric to $\NGr(\bD(\PP^1 \times \PP^1))$.

Finally, assume $n \ge 4$, $\NS(\GG)$ is odd and $K_\GG$ is primitive.
By~\cite[Proposition~A.12]{V} there is a basis $\ce_1,\dots,\ce_{n-3},H$ in $\NS(\GG)$ such that 
\begin{equation*}
\ce_i \cdot \ce_j = - \delta_{ij},
\qquad
H^2 = 1,
\qquad
\ce_i \cdot H = 0,
\qquad\text{and}\qquad
K_\GG = -3H + \sum \ce_i.
\end{equation*}
By Lemma~\ref{lemma:chi-1} and Remark~\ref{remark:matrix} with appropriate choice of a vector $v_0 \in \GG$ the matrix of $\chi$ in the basis $(v_0,H,\ce_1,\dots,\ce_{n-3},\bp)$ has form
\begin{equation*}
\chi = \left(
\begin{smallmatrix}
1 & 3 & 1 & \dots & 1 & 1 \\ 
0 & -1 & 0 & \dots & 0 & 0 \\ 
0 & 0 & 1 & \dots & 0 & 0 \\ 
\vdots & \vdots & \vdots & \ddots & \vdots & \vdots \\ 
0 & 0 & 0 & \dots & 1 & 0 \\ 
1 & 0 & 0 & \dots & 0 & 0 
\end{smallmatrix}
\right)
\end{equation*}
This matrix is equal to the matrix of $\chi$ in $\NGr(\bD(X_{n-3}))$ in the basis $(\cO(-2H),\cO_{L}, \cO_{E_1}, \dots, \cO_{E_{n-3}}, \cO_P)$, 
where $L$ is a general line on $\PP^2$ and $E_1$, \dots, $E_{n-3}$ are the exceptional divisors, hence $\GG$ is isometric to~$\NGr(\bD(X_{n-3}))$.

Now let us prove the second part of the theorem.

Assume $\GG$ has an exceptional basis. 
If $\GG$ is minimal, then by Theorem~\ref{theorem:minimal-cats} we know that $\GG$ is isometric to $\bD(\PP^2)$ or $\bD(\PP^1 \times \PP^1)$, hence either (1) or (2) holds.
If $\GG$ is not minimal, by Theorem~\ref{theorem-ranks-0-1} the exceptional basis can be transformed by mutations into 
a norm-minimal exceptional collection such that $\br(\ce_i) = 0$ for~$i \le k$ and $\br(\ce_i) = 1$ for $i \ge k + 1$ and $n - k \le 4$, $k \ge 1$.
We have $\chi(\ce_n,\ce_n) = 1$, hence $\chi$ represents~$1$ by a rank 1 vector.
Moreover, $\ce_1^2 = -1$, hence $\NS(\GG)$ is odd, and as it was shown in the proof of Theorem~\ref{theorem-ranks-1}, we have $K_\GG \cdot \ce_1 = \pm 1$, hence $K_\GG$ is primitive.
Thus (3) holds.
\end{proof}

\end{document}